\def\definetac{\newif\iftac}    % Can't define a \newif inside another \if!
\else\usepackage{amsthm}\fi
\definecolor{darkgreen}{rgb}{0,0.45,0} 
\let\ea\expandafter
\def\mdef#1#2{\ea\ea\ea\gdef\ea\ea\noexpand#1\ea{\ea\ensuremath\ea{#2}\xspace}}
\def\alwaysmath#1{\ea\ea\ea\global\ea\ea\ea\let\ea\ea\csname your@#1\endcsname\csname #1\endcsname
  \ea\def\csname #1\endcsname{\ensuremath{\csname your@#1\endcsname}\xspace}}
\DeclareRobustCommand\widecheck[1]{{\mathpalette\@widecheck{#1}}}
\def\@widecheck#1#2{%
    \setbox\z@\hbox{\m@th$#1#2$}%
    \setbox\tw@\hbox{\m@th$#1%
       \widehat{%
          \vrule\@width\z@\@height\ht\z@
          \vrule\@height\z@\@width\wd\z@}$}%
    \dp\tw@-\ht\z@
    \@tempdima\ht\z@ \advance\@tempdima2\ht\tw@ \divide\@tempdima\thr@@
    \setbox\tw@\hbox{%
       \raise\@tempdima\hbox{\scalebox{1}[-1]{\lower\@tempdima\box
\tw@}}}%
    {\ooalign{\box\tw@ \cr \box\z@}}}
\def\foreachletter#1#2#3{\foreachcount=#1
  \ea\loop\ea\ea\ea#3\@alph\foreachcount
  \advance\foreachcount by 1
  \ifnum\foreachcount<#2\repeat}
\def\foreachLetter#1#2#3{\foreachcount=#1
  \ea\loop\ea\ea\ea#3\@Alph\foreachcount
  \advance\foreachcount by 1
  \ifnum\foreachcount<#2\repeat}
\def\definescr#1{\ea\gdef\csname s#1\endcsname{\ensuremath{\mathscr{#1}}\xspace}}
\def\definecal#1{\ea\gdef\csname c#1\endcsname{\ensuremath{\mathcal{#1}}\xspace}}
\def\definebold#1{\ea\gdef\csname b#1\endcsname{\ensuremath{\mathbf{#1}}\xspace}}
\def\definebb#1{\ea\gdef\csname l#1\endcsname{\ensuremath{\mathbb{#1}}\xspace}}
\def\definefrak#1{\ea\gdef\csname f#1\endcsname{\ensuremath{\mathfrak{#1}}\xspace}}
\def\definebar#1{\ea\gdef\csname #1bar\endcsname{\ensuremath{\overline{#1}}\xspace}}
\def\definetil#1{\ea\gdef\csname #1til\endcsname{\ensuremath{\widetilde{#1}}\xspace}}
\def\definehat#1{\ea\gdef\csname #1hat\endcsname{\ensuremath{\widehat{#1}}\xspace}}
\def\definechk#1{\ea\gdef\csname #1chk\endcsname{\ensuremath{\widecheck{#1}}\xspace}}
\def\defineul#1{\ea\gdef\csname u#1\endcsname{\ensuremath{\underline{#1}}\xspace}}
\def\autofmt@n#1\autofmt@end{\mathrm{#1}}
\def\autofmt@b#1\autofmt@end{\mathbf{#1}}
\def\autofmt@l#1#2\autofmt@end{\mathbb{#1}\mathsf{#2}}
\def\autofmt@c#1#2\autofmt@end{\mathcal{#1}\mathit{#2}}
\def\autofmt@s#1#2\autofmt@end{\mathscr{#1}\mathit{#2}}
\def\autofmt@f#1\autofmt@end{\mathsf{#1}}
\def\autofmt@u#1\autofmt@end{\underline{\smash{\mathsf{#1}}}}
\def\autofmt@U#1\autofmt@end{\underline{\underline{\smash{\mathsf{#1}}}}}
\def\autofmt@h#1\autofmt@end{\widehat{#1}}
\def\autofmt@r#1\autofmt@end{\overline{#1}}
\def\autofmt@t#1\autofmt@end{\widetilde{#1}}
\def\autofmt@k#1\autofmt@end{\check{#1}}
\def\auto@drop#1{}
\def\autodef#1{\ea\ea\ea\@autodef\ea\ea\ea#1\ea\auto@drop\string#1\autodef@end}
\def\@autodef#1#2#3\autodef@end{%
  \ea\def\ea#1\ea{\ea\ensuremath\ea{\csname autofmt@#2\endcsname#3\autofmt@end}\xspace}}
\def\autodefs@end{blarg!}
\def\autodefs#1{\@autodefs#1\autodefs@end}
\def\@autodefs#1{\ifx#1\autodefs@end%
  \def\autodefs@next{}%
  \else%
  \def\autodefs@next{\autodef#1\@autodefs}%
  \fi\autodefs@next}
\DeclareSymbolFont{bbold}{U}{bbold}{m}{n}
\DeclareSymbolFontAlphabet{\mathbbb}{bbold}
\newcommand{\bbone}{\ensuremath{\mathbbb{1}}\xspace}
\mdef\delbar{\overline{\partial}}
\mdef\hf{\textstyle\frac12 }
\mdef\thrd{\textstyle\frac13 }
\mdef\qtr{\textstyle\frac14 }
\newcommand{\op}{^{\mathrm{op}}}
\newcommand{\pushoutcorner}[1][dr]{\save*!/#1+1.2pc/#1:(1,-1)@^{|-}\restore}
\mdef\Id{\mathrm{Id}}
\mdef\id{\mathrm{id}}
\def\frc#1/#2.{\frac{#1}{#2}}   % \frc x^2+1 / x^2-1 .
\mdef\ten{\mathrel{\otimes}}
\mdef\sqten{\mathrel{\boxtimes}}
\DeclareRobustCommand\widecheck[1]{{\mathpalette\@widecheck{#1}}}
\def\@widecheck#1#2{%
    \setbox\z@\hbox{\m@th$#1#2$}%
    \setbox\tw@\hbox{\m@th$#1%
       \widehat{%
          \vrule\@width\z@\@height\ht\z@
          \vrule\@height\z@\@width\wd\z@}$}%
    \dp\tw@-\ht\z@
    \@tempdima\ht\z@ \advance\@tempdima2\ht\tw@ \divide\@tempdima\thr@@
    \setbox\tw@\hbox{%
       \raise\@tempdima\hbox{\scalebox{1}[-1]{\lower\@tempdima\box
\tw@}}}%
    {\ooalign{\box\tw@ \cr \box\z@}}}
\DeclareMathOperator\colim{colim}
\DeclareMathOperator\im{im}
\DeclareMathOperator\Ho{Ho}
\newcommand{\ot}{\ensuremath{\leftarrow}}
\mdef\we{\overset{\sim}{\longrightarrow}}
\mdef\leftwe{\overset{\sim}{\longleftarrow}}
\let\xto\xrightarrow
\def\rightarrowtailfill@{\arrowfill@{\Yright\joinrel\relbar}\relbar\rightarrow}
\newcommand\xrightarrowtail[2][]{\ext@arrow 0055{\rightarrowtailfill@}{#1}{#2}}
\def\twoheadrightarrowfill@{\arrowfill@{\relbar\joinrel\relbar}\relbar\twoheadrightarrow}
\newcommand\xtwoheadrightarrow[2][]{\ext@arrow 0055{\twoheadrightarrowfill@}{#1}{#2}}
\def\slashedarrowfill@#1#2#3#4#5{%
  $\m@th\thickmuskip0mu\medmuskip\thickmuskip\thinmuskip\thickmuskip
   \relax#5#1\mkern-7mu%
   \cleaders\hbox{$#5\mkern-2mu#2\mkern-2mu$}\hfill
   \mathclap{#3}\mathclap{#2}%
   \cleaders\hbox{$#5\mkern-2mu#2\mkern-2mu$}\hfill
   \mkern-7mu#4$%
}
\def\rightslashedarrowfill@{%
  \slashedarrowfill@\relbar\relbar\mapstochar\rightarrow}
\newcommand\xslashedrightarrow[2][]{%
  \ext@arrow 0055{\rightslashedarrowfill@}{#1}{#2}}
\mdef\hto{\xslashedrightarrow{}}
\mdef\htoo{\xslashedrightarrow{\quad}}
\long\def\my@drawfill#1#2;{%
\@skipfalse
\fill[#1,draw=none] #2;
\@skiptrue
\draw[#1,fill=none] #2;
}
\newif\if@skip
\newcommand{\skipit}[1]{\if@skip\else#1\fi}
\newcommand{\drawfill}[1][]{\my@drawfill{#1}}
\newif\ifhyperref
  \let\your@state\state
  \def\state#1{\gdef\currthmtype{#1}\your@state{#1}}
  \let\your@staterm\staterm
  \def\staterm#1{\gdef\currthmtype{#1}\your@staterm{#1}}
  \let\defthm\newtheorem
  \def\currthmtype{}
    \def\autoref#1{\ref*{label@name@#1}~\ref{#1}}
    \def\autoref#1{\ref{label@name@#1}~\ref{#1}}
    \let\old@label\label%
    \def\label#1{%
      {\let\your@currentlabel\@currentlabel%
        \edef\@currentlabel{\currthmtype}%
        \old@label{label@name@#1}}%
      \old@label{#1}}
    \def\defthm#1#2{%
      %% All types of theorems are number inside sections
      \newtheorem{#1}{#2}[section]%
      %% This command tells hyperref's \autoref what to call things
      \expandafter\def\csname #1autorefname\endcsname{#2}%
      %% This makes all the theorem counters actually the same counter
      \expandafter\let\csname c@#1\endcsname\c@thm}
    \def\defthm#1#2{\newtheorem{#1}[thm]{#2}}
\let\SK@label\label\fi
    \let\old@label\label
    \let\your@thm\@thm
    \def\@thm#1#2#3{\gdef\currthmtype{#3}\your@thm{#1}{#2}{#3}}
    \def\currthmtype{}
    \def\label#1{{\let\your@currentlabel\@currentlabel\def\@currentlabel%
        {\currthmtype~\your@currentlabel}%
        \SK@label{#1@}}\old@label{#1}}
    \def\autoref#1{\ref{#1@}}
\newtheorem{thm}{Theorem}[section]
\iftac\theoremstyle{plain}\else\theoremstyle{definition}\fi
\iftac\theoremstyle{plain}\else\theoremstyle{remark}\fi
\def\thmqedhere{\expandafter\csname\csname @currenvir\endcsname @qed\endcsname}
  \let\c@equation\c@subsection
  \let\c@equation\c@thm
\numberwithin{equation}{section}
\mdef\ep{\varepsilon}
\mdef\ph{\varphi}
\tikzset{lab/.style={auto,font=\scriptsize}} % arrow labels
\definecolor{fxnote}{rgb}{1.0000,0.0000,0.0000}
\colorlet{fxnotebg}{yellow}
\newcommand{\D}{\sD}
\newcommand{\E}{\sE}
\def\cPDER{\ensuremath{\mathcal{PD}\mathit{ER}}\xspace}
\def\ho{\mathscr{H}\!\mathit{o}\xspace}
\let\oldboxtimes\boxtimes
\def\boxtimes{\mathrel{\oldboxtimes}}
\newcommand{\fib}{\mathsf{fib}}
\newcommand{\cof}{\mathsf{cof}}
\def\ccsub{_{\mathrm{cc}}}
\def\pdh(#1,#2){\llbracket #1,#2\rrbracket}
\def\ldh(#1,#2){\llbracket #1,#2\rrbracket\ccsub}
\def\pend(#1){\pdh(#1,#1)}
\def\lend(#1){\ldh(#1,#1)}
\def\DTl#1#2#3#4#5#6#7{%
  \xymatrix@C=3pc{{#1} \ar[r]^-{#2} &
    {#3} \ar[r]^-{#4} &
    {#5} \ar[r]^-{#6} &
    {#7}
  }}
\newsavebox{\tvabox}
\savebox\tvabox{\hspace{1mm}\begin{tikzpicture}[>=latex',baseline={(0,-.18)}]
  \draw[->] (0,.1) -- +(1,0);
  \node at (.5,0) {$\scriptscriptstyle\bot$};
  \draw[->] (1,-.1) -- +(-1,0);
  \draw[->] (1,-.2) -- +(-1,0);
\end{tikzpicture}\hspace{1mm}}
\newcommand{\exx}{\mathrm{ex}}
\newcommand{\modr}{\mathrm{mod}\,}
\newcommand{\sink}[1]{({#1}\cdot\bbone)^\rhd}
\newcommand{\source}[1]{({#1}\cdot\bbone)^\lhd}
\newcommand{\sse}{\stackrel{\mathrm{s}}{\sim}}
\title{Abstract tilting theory for quivers and related categories}
\author{Moritz Groth}
\address{Mathematisches Institut, Rheinische Friedrich-Wilhelms-Universit\"at Bonn, Endenicher Allee 60, D-53115 Bonn, Germany}
\email{mgroth@math.uni-bonn.de}
\author{Jan \v{S}\v{t}ov\'{\i}\v{c}ek}
\address{Department of Algebra, Charles University in Prague, Sokolovska 83, 186 75 Praha~8, Czech Republic}
\email{stovicek@karlin.mff.cuni.cz}
\subjclass[2010]{Primary: 55U35. Secondary: 16E35, 18E30, 55U40.}
\keywords{Reflection morphism, strong stable equivalence, stable derivator}
\date{\today}
\thanks{The second named author was supported by Neuron Fund for Support of Science.}
\begin{document}

\begin{abstract}
We generalize the construction of reflection functors from classical representation theory of quivers to arbitrary small categories with freely attached sinks or sources. These reflection morphisms are shown to induce equivalences between the corresponding representation theories with values in arbitrary stable homotopy theories, including representations over fields, rings or schemes as well as differential-graded and spectral representations. 

Specializing to representations over a field and to specific shapes, this recovers derived equivalences of Happel for finite, acyclic quivers. However, even over a field our main result leads to new derived equivalences for example for not necessarily finite or acyclic quivers. 

The results obtained here rely on a careful analysis of the compatibility of gluing constructions for small categories with homotopy Kan extensions and homotopical epimorphisms, as well as on a study of the combinatorics of amalgamations of categories. 
\end{abstract}

\maketitle

\tableofcontents

\section{Introduction}
\label{sec:intro}

In \cite{happel:fd-algebra} Happel considered derived categories of finite-dimensional algebras over fields. Interesting special cases of such algebras are path algebras of finite and acyclic quivers. Let us recall that a quiver is simply an oriented graph and that a quiver is acyclic if it admits no non-trivial oriented cycles.  Given such an acyclic quiver $Q$ and a source $q_0\in Q$ (no edge ends at $q_0$) there is the reflected quiver $Q'$ obtained by turning the source into a sink. Bern{\v{s}}te{\u\i}n, Gel$'$fand, and Ponomarev \cite{BGP:Coxeter} showed that the corresponding abelian categories of representations are related by reflection functors. If one works with representations of a \emph{finite, acyclic} quiver over a field, then Happel proved in \cite{happel:fd-algebra} that derived reflection functors yield exact equivalences between the corresponding bounded derived categories of the path algebras. 

The main goal of this paper is to generalize this result in two different directions. First, we show that one obtains similar equivalences if one drops the assumption of working over a field. More precisely, we construct such exact equivalences of derived or homotopy categories of representations over a ring, of representations in quasi-coherent modules on arbitrary schemes, of differential-graded representations over differential-graded algebras, and of spectral representations. In fact, we obtain equivalences of homotopy theories of representations and we show that the existence of such equivalences is a formal consequence of \emph{stability} only. Hence there are many additional variants for representations with values in other \emph{stable homotopy theories} arising in algebra, geometry, and topology (for more details about what we mean by a stable homotopy theory see further below).

Second, we generalize this result in that we obtain such equivalences for a significantly larger class of shapes. Given an \emph{arbitrary small category} $C$ and a finite string  $y_1,y_2,\ldots,y_n$ of objects in $C$, then we can form new categories $C^-$ and $C^+$ by freely adjoining a source or a sink to these objects in $C$. The string of objects is allowed to have some repetition, so that the generic picture to have in mind is as in \autoref{fig:adjoining}. In this situation we show that the categories $C^-$ and $C^+$ have equivalent homotopy theories of representations with values in arbitrary stable homotopy theories, i.e., that $C^-$ and $C^+$ are \emph{strongly stably equivalent} in a sense made precise in \eqref{eq:sse-intro}. 

To illustrate this abstract statement let us turn to some special cases which we explore further in \cite{gst:acyclic-2}. As a first example, if we specialize to a finite, acyclic quiver and consider representations over a field, then we recover the derived equivalences of Happel~\cite{happel:fd-algebra} (actually also a version for unbounded chain complexes). However, even for representations over a field and of quivers, the main result leads to new classes of derived equivalences. 
\begin{enumerate}
\item For example, dropping the finiteness assumption, we see that reflection functors induce derived equivalences between the infinite-dimensional (possibly non-unital) algebras associated to infinite, acyclic quivers. 
\item Alternatively, we can drop the acyclicity assumption. As long as there are sources or sinks in a finite quiver, corresponding reflection functors yield derived equivalences between infinite-dimensional path algebras.
\item Combining these two, we can also drop both the finiteness and the acyclicity assumption. As soon as an arbitrary quiver has sources or sinks, there are associated derived equivalences given by reflection functors. 
\end{enumerate}
Choosing other examples of stable homotopy theories, we see that all these equivalences also have variants if we do not work over a field but with more general abstract representations. As a further specialization we deduce that finite oriented trees can be reoriented arbitrarily without affecting the abstract representation theory, thereby reproducing the main result of \cite{gst:tree}. To mention an additional instance, if one considers representations of a poset in Grothendieck abelian categories, then our main result reestablishes a special case of a result of Ladkani~\cite{ladkani:posets}, but also extends it for example to differential-graded and spectral representations. And there are additional such statements starting with more general small categories instead.

These abstract equivalences are realized by general reflection morphisms between homotopy theories of representations. The arguments involved in their construction are rather formal as they rely only on the existence of a well-behaved calculus of restrictions and (homotopy) Kan extensions of diagrams in stable homotopy theories. Besides being fairly transparent, there are two additional advantages of this method of construction. 
\begin{enumerate}
\item First, this leads to equivalences of \emph{homotopy theories} of abstract representations as opposed to mere equivalences of \emph{homotopy categories} of representations. Since equivalences of homotopy theories \emph{are} exact, the corresponding functors between derived categories or homotopy categories can be turned into exact equivalences with respect to classical triangulations \cite{groth:ptstab}. However, in general, the existence of exact equivalences of triangulated categories of representations does not imply that there are equivalences of homotopy theories in the background. While this is by \cite{dugger-shipley:k-theory} the case for representations over rings, as soon as one passes to differential-graded or spectral representations it is in general a stronger result to have equivalences of homotopy theories.
\item Second, this way the equivalences of homotopy theories of representations with values in stable homotopy theories are seen to be compatible with exact morphisms of stable homotopy theories. In particular, these equivalences hence interact nicely with restriction and (co)induction of scalar morphisms, with localizations and colocalizations, with derived tensor and hom morphisms, and more general exact morphisms. 
\end{enumerate} 

\begin{figure}
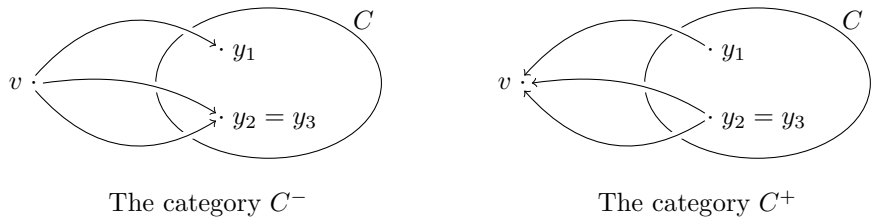

\centering
\TikzFigCplusminus
\caption{Adjoining a source and a sink to $C\in\cCat$.}
\label{fig:adjoining}
\end{figure}

Let us now be more specific about what we mean by abstract (stable) homotopy theories. By now there are various ways of axiomatizing (stable) homotopy theories, including Quillen model categories (\cite{quillen:ha} or \cite{hovey:model}), quasi-categories or $\infty$-categories (\cite{HTT,HA} or \cite{groth:scinfinity}), derivators (\cite{grothendieck:derivators,heller:htpythies,franke:adams}), as well as the more classical triangulated categories. In this paper we use the language of derivators which by definition can be thought of as minimal, purely categorical extensions of the more classical derived or homotopy categories to a framework with a well behaved calculus of homotopy (co)limits and homotopy Kan extensions. In this approach to abstract homotopy theory, homotopy (co)limits and homotopy Kan extensions are defined and characterized by ordinary universal properties, thereby making their calculus accessible by elementary categorical techniques.

The basic idea about derivators is as follows. Given an abelian category $\cA$, the derived category $D(\cA)$ is rather ill-behaved. In particular, the calculus of derived (co)limits and derived Kan extensions is not visible to $D(\cA)$ \emph{alone}. Hence, if one agrees on this calculus to be relevant  (and some evidence for this is for example provided by the observation that classical triangulations simply encode certain shadows of iterated derived cokernel constructions), why not simply encode derived categories of diagram categories $D(\cA^B)$ for various small categories $B$ together with restriction functors between them? Pursuing this more systematically one is lead to consider the \emph{derivator} of $\cA$, a certain $2$-functor
\[
\D_\cA\colon B\mapsto\D_{\cA}(B)=D(\cA^B),
\]
and derived Kan extensions now are merely adjoints to (derived) restriction functors. The values of $\D_{\cA}$ are considered as plain categories, but \emph{exactness properties} of the derivator can be used to construct canonical triangulations and canonical higher triangulations in the sense of Maltsiniotis \cite{maltsiniotis:higher}. In fact, this holds more generally for strong, stable derivators  (see \cite{franke:adams,maltsiniotis:seminar,groth:ptstab} and \cite{gst:Dynkin-A}), such as homotopy derivators of stable model categories or stable $\infty$-categories. Let us recall that a derivator is stable if it admits a zero object and if a square is cartesian if and only if it is cocartesian (see \cite{gps:mayer,gst:basic} for alternative characterizations). While stability is invisible to ordinary category theory, there is a ubiquity of stable derivators arising in algebra, geometry, and topology (\cite[\S5]{gst:basic}).

Now, the connection to abstract representation theory or abstract tilting theory is provided by the following observation. Given a derivator \D and a small category $B$, there is the derivator $\D^B$ of coherent diagrams of shape $B$ in $\D$. This exponentiation is compatible with the formation of exponentials at the level of abelian categories, (nice) model categories, and $\infty$-categories. For example, given a Grothendieck abelian category $\cA$ and a small category $B$ there is an equivalence of derivators
\[
\D_{\cA}^B\simeq\D_{\cA^B}.
\]
Specializing further this shows that the passage to category algebras (like path algebras, incidence algebras, and group algebras) can be modeled by this shifting operation at the level of derivators. 

To state the main result of this paper more precisely, let $\cDER_{\mathrm{St,ex}}$ be the $2$-category of stable derivators, exact morphisms, and all natural transformations. For every small category $B$, exponentiation by $B$ defines a $2$-functor
\[
(-)^B\colon\cDER_{\mathrm{St,ex}}\to\cDER\colon\D\mapsto\D^B,
\]
where $\cDER$ is the $2$-category of derivators. Denoting again by $C$ an arbitrary small category and by $C^-,C^+$ the categories obtained from $C$ by freely attaching a source or a sink to a prescribed string of objects (see again \autoref{fig:adjoining}), we show that these two categories are  \emph{strongly stably equivalent} in the sense of \cite{gst:basic}. Thus, we show that there is a pseudo-natural equivalence of $2$-functors
\begin{equation}\label{eq:sse-intro}
\Phi\colon(-)^{C^-}\simeq(-)^{C^+}\colon\cDER_{\mathrm{St,ex}}\to\cDER,
\end{equation}
and in this precise sense $C^-,C^+$ have equivalent abstract representation theories. 

In the sequel \cite{gst:acyclic-2} we study these general reflection morphisms further. We will show that unrelated reflections commute, leading to abstract Coxeter morphisms for finite, acyclic quivers. Moreover, the reflections are shown to be realized by explicitly constructed invertible spectral bimodules, and this yields non-trivial elements in spectral Picard groupoids. We will also obtain a spectral Serre duality result for acyclic quivers and, more generally, strongly homotopy finite categories.

While here and in the sequel we state and prove the above results using the language of derivators, it is completely formal to also deduce implications for model categories and $\infty$-categories of abstract representations. For concreteness, given a stable, combinatorial model category~$\cM$, the existence of the strong stable equivalence \eqref{eq:sse-intro} implies by \cite{renaudin} that the model categories $\cM^{C^-}$ and $\cM^{C^+}$ are connected by a zigzag of Quillen equivalences. And, similarly, there is a variant for stable, presentable $\infty$-categories of representations. 

This paper belong to a series of papers on abstract representation theory and abstract tilting theory, and they can be considered as sequels to \cite{gst:basic},\cite{gst:tree}, and \cite{gst:Dynkin-A}. This project relies both on a basic formal understanding of stability \cite{groth:ptstab,gps:mayer} as well as on a basic understanding of the interaction of monoidality and stability \cite{gps:additivity,ps:linearity}. We intend to come back to further applications to abstract representation theory elsewhere.

The content of the sections is as follows. In \S\S\ref{sec:review}-\ref{sec:review-htpy} we recall some basics concerning derivators. In \S\ref{sec:guide} we outline the strategy of the construction of the general reflection morphisms leading to the desired strong stable equivalence. In \S\S\ref{sec:glue}-\ref{sec:glue-epi} we introduce free oriented gluing constructions of small categories and study their compatibility with Kan extensions and homotopical epimorphisms. This allows us in \S\ref{sec:reflection-sep} to construct reflection equivalences in the special case of separated sources and sinks. In \S\ref{sec:detect} we establish two simple detection criteria for homotopical epimorphisms, which we use in \S\ref{sec:reflection} to conclude the construction of reflection equivalences in the general case. In \S\ref{sec:applications} we deduce some consequences of our abstract tilting result. Finally, in \S\ref{sec:amalgamation} we collect some results concerning the combinatorics of amalgamations of small categories which are useful in \S\ref{sec:reflection}.

\section{Review of stable derivators and strong stable equivalences}
\label{sec:review}

In this section we include a short review of stable derivators. For more details we refer the reader to \cite{groth:ptstab,gps:mayer}. The key idea behind a derivator is that they enhance the more classical derived categories of abelian categories and homotopy categories of model categories by also keeping track of homotopy categories of diagram categories together with the calculus of homotopy Kan extensions. Like stable model categories and stable $\infty$-categories, stable derivators provide an enhancement of triangulated categories.

To make this precise, let $\cCat$ be the $2$-category of small categories and $\cCAT$ the $2$-category of not necessarily small categories. We refer the reader to \cite{borceux:1} for basic $2$-categorical terminology.

\begin{defn}
A \textbf{prederivator} is a $2$-functor $\D\colon\cCat\op\to\cCAT$. \textbf{Morphisms} of prederivators are pseudo-natural transformations and \textbf{transformations} between morphisms of prederivators are modifications, yielding the $2$-category~$\cPDER$ of prederivators.
\end{defn}

Given a prederivator~\D we refer to objects in $\D(A)$ as \textbf{coherent diagrams (of shape~$A$)}. For every functor $u\colon A\to B$ there is a \textbf{restriction functor} $u^\ast\colon\D(B)\to\D(A)$. In the special case that $A=\bbone$ is the terminal category and $u=b\colon\bbone\to B$ hence classifies an object~$b\in B$, we refer to $b^\ast\colon\D(B)\to\D(\bbone)$ as an \textbf{evaluation functor}. Evaluating a morphism $f\colon X\to Y$ in $\D(B)$ we obtain induced morphisms $f_b\colon X_b\to Y_b,b\in B,$ in the \textbf{underlying category} $\D(\bbone)$.

If a restriction functor $u^\ast\colon\D(B)\to\D(A)$ admits a left adjoint, then we refer to it as a \textbf{left Kan extension functor} and denote it by  $u_!\colon\D(A)\to\D(B)$. In the special case that $u=\pi_A\colon A\to\bbone$ collapses $A$ to a point, such a left adjoint is also denoted by $(\pi_A)_!=\colim_A\colon\D(A)\to\D(\bbone)$ and referred to as a \textbf{colimit functor}. Dually, we speak of \textbf{right Kan extension functors} $u_\ast\colon\D(A)\to\D(B)$ and \textbf{limit functors} $(\pi_A)_\ast=\lim_A\colon\D(A)\to\D(\bbone)$.

For \emph{derivators} we ask for the existence of such Kan extension functors and that they can be calculated \emph{pointwise} (see \cite[X.3.1]{maclane} for the classical context of ordinary categories). To express this purely $2$-categorically, we consider the \textbf{slice squares}
\begin{equation}
\vcenter{
\xymatrix{
(u/b)\ar[r]^-p\ar[d]_-{\pi_{(u/b)}}\drtwocell\omit{}&A\ar[d]^-u&&(b/u)\ar[r]^-q\ar[d]_-{\pi_{(b/u)}}&A\ar[d]^-u\\
\bbone\ar[r]_-b&B,&&\bbone\ar[r]_-b&B,\ultwocell\omit{}
}
}
\label{eq:Der4}
\end{equation}
coming with transformations $u\circ p\to b\circ\pi$ and $b\circ\pi\to u\circ q$, respectively. Here, objects in the \textbf{slice category} $(u/b)$ are pairs $(a,f)$ consisting of an object $a\in A$ and a morphism $f\colon u(a)\to b$ in $B$. A morphism $(a,f)\to(a',f')$ is a map $a\to a'$ in $A$ making the  obvious triangles commute. The functor $p\colon (u/b)\to A$ is the obvious projection and the component of the transformation $u\circ p\to b\circ\pi$ at $(a,f)$ is~$f$. The square on the right in \eqref{eq:Der4} is defined dually.

\begin{defn}\label{defn:derivator}
  A prederivator $\D\colon\cCat\op\to\cCAT$ is a \textbf{derivator}\footnote{We emphasize that $\cCat\op$ is obtained from $\cCat$ by changing the orientation of functors but not of natural transformations. Thus, following Heller~\cite{heller:htpythies} and Franke~\cite{franke:adams}, our convention for derivators is based on \emph{diagrams}. There is an equivalent approach using \emph{presheaves}, i.e., contravariant functors; see for example \cite{grothendieck:derivators,cisinski:direct}.} if the following properties are satisfied.
  \begin{itemize}[leftmargin=4em]
  \item[(Der1)] $\D\colon \cCat\op\to\cCAT$ takes coproducts to products, i.e., the canonical map $\D(\coprod A_i)\to\prod\D(A_i)$ is an equivalence.  In particular, $\D(\emptyset)$ is equivalent to the terminal category.
  \item[(Der2)] For any $A\in\cCat$, a morphism $f\colon X\to Y$ in $\D(A)$ is an isomorphism if and only if the morphisms $f_a\colon X_a\to Y_a, a\in A,$ are isomorphisms in $\D(\bbone).$
  \item[(Der3)] Each functor $u^*\colon \D(B) \to\D(A)$ has both a left adjoint $u_!$ and a right adjoint $u_*$.
  \item[(Der4)] For any functor $u\colon A\to B$ and any $b\in B$ the canonical transformations
\begin{gather}
  \pi_! p^* \stackrel{\eta}{\to} \pi_! p^* u^* u_! \to \pi_! \pi^* b^* u_! \stackrel{\epsilon}{\to} b^* u_!  \mathrlap{\qquad\text{and}}\label{eq:Der4!}\\
  b^* u_* \stackrel{\eta}{\to} \pi_* \pi^* b^* u_* \to \pi_* q^* u^* u_* \stackrel{\epsilon}{\to} \pi_* q^*\label{eq:Der4*}
\end{gather}
associated to the slice squares \eqref{eq:Der4} are isomorphisms.
  \end{itemize}
\end{defn}

Axiom (Der4) hence says that for $u\colon A\to B,b\in B,$ and $X\in\D(A)$ certain canonical maps
\[
\colim_{(u/b)}p^\ast X\to u_!(X)_b\qquad\text{and}\qquad u_\ast(X)_b\to\lim_{(b/u)}q^\ast X
\]
are isomorphisms. We say a bit more about the formalism related to (Der4) in \S\ref{sec:review-htpy}. 

\textbf{Morphisms} and \textbf{transformations} of derivators are morphisms and transformations of underlying prederivators, respectively, yielding the sub-$2$-category $\cDER\subseteq\cPDER$ of derivators. Given a (pre)derivator, we often write $X\in\D$ if there is a small category~$A$ such that $X\in\D(A)$.

\begin{egs}\label{egs:derivator}
~
\begin{enumerate}
\item Let~\cC be an ordinary category. The $2$-functor
\[
y_\cC\colon\cCat\op\to\cCAT\colon A\mapsto \cC^A
\]
is a derivator if and only if $\cC$ is complete and cocomplete. Kan extension functors in such a \textbf{represented derivator} are \emph{ordinary Kan extensions} from classical category theory. The underlying category of $y_\cC$ is isomorphic to~$\cC$.
\item Let~$\cA$ be a Grothendieck abelian category and let $\nCh(\cA)$ be the category of unbounded chain complexes in~$\cA$. For every $A\in\cCat$ we denote by $W^A$ the class of levelwise quasi-isomorphisms in $\nCh(\cA)^A$. The $2$-functor
\[
\D_\cA\colon\cCat\op\to\cCAT\colon A\mapsto \nCh(\cA)^A[(W^A)^{-1}]
\]
is a derivator. Kan extension functors in $\D_{\cA}$ are \emph{derived Kan extensions} in the sense of homological algebra. The underlying category of $\D_\cA$ is isomorphic to the \emph{derived category} $D(\cA)$ of~$\cA$. As interesting examples we obtain derivators associated to fields, rings, and schemes.
\item Let $\cM$ be a Quillen model category \cite{quillen:ha,hovey:model} with weak equivalences $W$. Denoting by $W^A$ the levelwise weak equivalences in $\cM^A$, there is an associated \textbf{homotopy derivator}
\[
\ho_\cM\colon\cCat\op\to\cCAT\colon A\mapsto \cM^A[(W^A)^{-1}];
\]
see \cite{cisinski:direct} for the general case and \cite[Prop.~1.30]{groth:ptstab} for an easy proof in the case of combinatorial model categories. Kan extension functors in $\ho_\cM$ are \emph{homotopy Kan extensions}. The underlying category of $\ho_\cM$ is isomorphic to the \emph{homotopy category} $\Ho(\cM)$. Similarly, there are homotopy derivators associated to complete and cocomplete $\infty$-categories or quasi-categories (\cite{joyal:I-II,joyal:barca,HTT,groth:scinfinity}); see \cite{gps:mayer} for a sketch proof. These two classes give rise to a plethora of additional examples of derivators.
\end{enumerate}
\end{egs}

Thus, derivators encode key formal properties of the calculus of Kan extensions, derived Kan extensions, and homotopy Kan extensions, as it is available in typical situations arising in nature. It turns out that many constructions are combinations of such Kan extensions, including the general reflection functors we construct in this paper; see \S\ref{sec:guide}, \S\ref{sec:reflection-sep}, and \S\ref{sec:reflection}.

Let $[1]$ be the poset $(0<1)$ considered as a category and let $\square=[1]\times[1]$ be the commutative square. We denote by $i_\ulcorner\colon\ulcorner\to\square$, $i_\lrcorner\colon\lrcorner\to\square$ the full subcategories obtained by removing the final and initial object, respectively. A square $X\in\D(\square)$ is \textbf{cartesian} if it lies in the essential image of $(i_\lrcorner)_\ast\colon\D(\lrcorner)\to\D(\square)$. Dually, we define \textbf{cocartesian squares}.

\begin{defn}
A derivator is \textbf{pointed} if the underlying category has a zero object. A pointed derivator is \textbf{stable} if a square is cartesian if and only if it is cocartesian.
\end{defn}

\begin{egs}
~
\begin{enumerate}
\item The derivator of a Grothendieck abelian category is stable. In particular, fields, rings, and schemes have associated stable derivators.
\item Homotopy derivators of stable model categories and stable $\infty$-categories are stable.
\item The derivator of differential graded modules over a differential graded algebra is stable.
\item The derivator of module spectra over a symmetric ring spectrum is stable. In particular, the derivator of spectra itself is stable.
\end{enumerate}
\end{egs}

We refer the reader to \cite[Examples~5.5]{gst:basic} for many additional examples of stable derivators arising in algebra, geometry, and topology. It can be shown that the values of a (strong) stable derivators are canonically triangulated categories (\cite{franke:adams,maltsiniotis:seminar} or \cite[Thm.~4.16 and Cor.~4.19]{groth:ptstab}) and even higher triangulated categories (\cite[Thm.~13.6, Cor.~13.11, and Rmk.~13.12]{gst:Dynkin-A}) in the sense of Maltsiniotis~\cite{maltsiniotis:higher}.

In a pointed derivator~\D one can define \textbf{suspensions, loops, cofibers, and fibers} (see~\cite[\S3]{groth:ptstab}), yielding adjunctions
\[
(\Sigma,\Omega)\colon\D(\bbone)\rightleftarrows\D(\bbone)\qquad\text{and}\qquad (\cof,\fib)\colon\D([1])\rightleftarrows\D([1]).
\]
We recall from \cite[\S8]{gst:basic} some basic notation and terminology related to $n$-cubes $[1]^n=[1]\times\ldots\times[1]$. The poset $[1]^n$ is isomorphic to the power set of $\{1,\ldots,n\}$, and this isomorphism is used implicitly in what follows. We denote by $i_{\geq k}\colon[1]^n_{\geq k}\to[1]^n, 0\leq k\leq n,$ the full subcategory spanned by all subsets of cardinality at least $k$. This notation has obvious variants, for example, the full subcategory $i_{=n-1}\colon[1]^n_{=n-1}\to[1]^n$ is the discrete category $n\cdot\bbone=\bbone\sqcup\ldots\sqcup\bbone$ on~$n$ objects.

\begin{defn}
Let \D be a derivator. An $n$-cube $X\in\D([1]^n)$ is \textbf{strongly cartesian} if it lies in the essential image of $(i_{\geq n-1})_\ast\colon\D([1]_{\geq n-1}^n)\to\D([1]^n)$. An $n$-cube $X\in\D([1]^n)$ is \textbf{cartesian} if it lies in the essential image of $(i_{\geq 1})_\ast$.
\end{defn}

Dually, one defines \textbf{(strongly) cocartesian $n$-cubes}. Following ideas of Goodwillie \cite{goodwillie:II}, one shows the following.

\begin{thm}[{\cite[Thm.~8.4]{gst:basic},\cite[Cor.~8.12]{gst:basic}}]
\label{thm:strongly-cocart}
An $n$-cube, $n \ge 2$, in a derivator is strongly cartesian if and only if all subcubes are cartesian if and only if all subsquares are cartesian.
\end{thm}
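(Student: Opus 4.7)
I will proceed by induction on $n \geq 2$. The base case $n = 2$ is immediate: the inclusion $[1]^2_{\geq n-1} = [1]^2_{\geq 1}$ is precisely the cospan inclusion $\lrcorner \hookrightarrow \square$, so ``strongly cartesian'' coincides with ``cartesian'' and the only 2-subface is the cube itself.

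For the inductive step with $n \geq 3$, assume the theorem in dimensions $2,\ldots,n-1$. The implication ``all subcubes cartesian $\Rightarrow$ all subsquares cartesian'' is trivial (subsquares are 2-subcubes), and ``strongly cartesian $\Rightarrow$ cartesian'' follows by factoring $i_{\geq n-1}$ through $i_{\geq 1}$, since $(i_{\geq n-1})_* = (i_{\geq 1})_*\, j_*$ where $j\colon [1]^n_{\geq n-1}\hookrightarrow[1]^n_{\geq 1}$, so the essential image of $(i_{\geq n-1})_*$ is contained in that of $(i_{\geq 1})_*$.

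The substantive content is ``all 2-subfaces cartesian $\Rightarrow$ strongly cartesian.'' Applying (Der4) to $i_{\geq n-1}$ at a vertex $S$ identifies the slice $S/i_{\geq n-1}$ with $[1]^{n-|S|}_{\geq n-|S|-1}$, so $X$ is strongly cartesian iff for each $S$ with $|S| \leq n-2$ the canonical map
\[ X_S \to \lim_{T \supseteq S,\, |T|\geq n-1} X_T \]
is an isomorphism; the target is the top-corner wide pullback of the sub-$(n-|S|)$-cube above $S$. For $S$ with $|S| \geq 1$, I restrict $X$ to $\{T : T \supseteq S\}$: its 2-subfaces are 2-subfaces of $X$, hence cartesian by hypothesis, so by the inductive hypothesis (applied in dimension $n - |S| < n$) this sub-cube is itself strongly cartesian, which is precisely the required condition at $X_S$. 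The outstanding case is $S = \emptyset$. For this I plan to exploit the iterated factorization $[1]^n_{\geq n-1} \hookrightarrow [1]^n_{\geq n-2} \hookrightarrow \cdots \hookrightarrow [1]^n_{\geq 1} \hookrightarrow [1]^n$: by the $|S| \geq 1$ analysis, the restriction of $X$ to $[1]^n_{\geq 1}$ is the right Kan extension from $[1]^n_{\geq n-1}$, and it then suffices to check that $X$ is cartesian, i.e.\ $X_\emptyset \cong \lim_{T \neq \emptyset} X_T$. I handle this by noting that each codimension-one face $F_\ell = \{T : \ell \notin T\}$ is strongly cartesian as an $(n-1)$-cube (again by the restriction argument above, its 2-subfaces are 2-subfaces of $X$, so the inductive hypothesis applies); together with the 2-face condition at $\emptyset$ and a cofinality/iterated-limit argument, this yields the desired cartesianness of $X$.

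Finally, ``strongly cartesian $\Rightarrow$ all subcubes cartesian'' follows along the same lines: for any $k$-subcube of $X$ ($2 \leq k \leq n$) with base $S \neq \emptyset$, the restriction argument shows it is strongly cartesian, and the inductive hypothesis then gives that all its subcubes are cartesian; for $k$-subcubes with base $\emptyset$ and $k < n$ one embeds them into a codimension-one face $F_\ell$, which is strongly cartesian by the same reasoning and therefore has all subcubes cartesian by the inductive hypothesis, while the full cube $X$ is cartesian by the easy implication. The main obstacle I foresee is precisely the $S = \emptyset$ case in the main step, where induction on dimension cannot be invoked directly; the iteration through the chain of inclusions $[1]^n_{\geq k+1} \hookrightarrow [1]^n_{\geq k}$ together with careful pointwise application of (Der4) and the strongly cartesian structure on each codimension-one face is the critical technical input.
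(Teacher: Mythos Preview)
The paper does not contain a proof of this theorem: it is stated in the review section \S\ref{sec:review} and explicitly cited from \cite[Thm.~8.4]{gst:basic} and \cite[Cor.~8.12]{gst:basic}. There is therefore no ``paper's own proof'' to compare against; the result is imported wholesale from the literature.

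As to the substance of your attempt: the overall inductive strategy is sound, and the reductions for $|S|\geq 1$ via restriction to proper subcubes are correct. However, the case $S=\emptyset$ is not actually proved---you yourself flag it as the main obstacle and then defer to an unspecified ``cofinality/iterated-limit argument.'' Concretely, knowing that each codimension-one face $F_\ell$ through $\emptyset$ is strongly cartesian gives you $X_\emptyset \cong \lim_{T\ni\ell,\,T\neq\{1,\ldots,n\}} X_T$ for each $\ell$ separately, but you have not shown how to combine these with the remaining 2-face conditions to conclude $X_\emptyset \cong \lim_{T\neq\emptyset} X_T$. One clean way to close this gap is to write $[1]^n = [1]^{n-1}\times[1]$, view $X$ as a morphism of $(n-1)$-cubes in $\D^{[1]}$, and use that right Kan extensions in $\D^{[1]}$ are computed pointwise together with the inductive hypothesis and the pasting law for cartesian squares; this is essentially the argument in \cite{gst:basic}. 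Until that step is made explicit, your proof has a genuine gap at the one place where naive induction does not apply.
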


Stable derivators admit the following different characterizations.

\begin{thm}[{\cite[Thm.~7.1]{gps:mayer},\cite[Cor.~8.13]{gst:basic}}]
The following are equivalent for a pointed derivator \D.
\begin{enumerate}
\item The adjunction $(\Sigma,\Omega)\colon\D(\bbone)\to\D(\bbone)$ is an equivalence.
\item The adjunction $(\cof,\fib)\colon\D([1])\to\D([1])$ is an equivalence.
\item The derivator \D is stable.
\item An $n$-cube in \D, $n\geq 2,$ is strongly cartesian if and only if it is strongly cocartesian.
\end{enumerate}
\end{thm}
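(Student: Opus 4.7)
My plan is to establish the equivalences along the cycle $(3)\Leftrightarrow(4)$, $(3)\Leftrightarrow(2)$, $(2)\Leftrightarrow(1)$, with the delicate direction being $(1)\Rightarrow(2)$.

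The equivalence $(3)\Leftrightarrow(4)$ follows directly from \autoref{thm:strongly-cocart} and its dual. Assuming stability, an $n$-cube is strongly cartesian iff all its subsquares are cartesian iff, by stability applied subsquarewise, all its subsquares are cocartesian iff it is strongly cocartesian. Conversely, setting $n=2$ in (4) recovers (3), since a strongly cartesian $2$-cube is by definition just a cartesian square.

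For $(3)\Rightarrow(2)$, the cofiber $\cof(f)$ of $f\colon X\to Y$ in $\D([1])$ is obtained by extending the span $0\leftarrow X\xrightarrow{f}Y$ to a cocartesian square whose fourth vertex is $Cf$, and reading off the morphism $Y\to Cf$. Stability makes this square cartesian as well, so $\fib(\cof(f))\simeq f$, and dually $\cof(\fib(g))\simeq g$; hence $(\cof,\fib)$ is an adjoint equivalence on $\D([1])$. For the converse $(2)\Rightarrow(3)$, a square in a pointed derivator is cartesian iff the canonical map from its $(0,0)$-vertex to the fiber of its lower cospan is invertible, and cocartesian iff the dual cofiber comparison is invertible. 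Under the hypothesis that $(\cof,\fib)$ is an equivalence, these two conditions become interchangeable.

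For $(2)\Leftrightarrow(1)$, I would use the realization of $\Sigma$ and $\Omega$ as composites $1^*\circ\cof\circ s$ and $0^*\circ\fib\circ t$, where $s,t\colon\D(\bbone)\to\D([1])$ send $X$ to $(X\to 0)$ and $(0\to X)$ respectively, and $0^*,1^*$ denote the evaluation functors. The implication $(2)\Rightarrow(1)$ is then a formal restriction of the $\D([1])$-level equivalence along these fully faithful embeddings, together with $1^*\circ s\simeq 0$ and $0^*\circ t\simeq 0$ on the other coordinate, which ensures that the restricted endofunctors really are $\Sigma$ and $\Omega$.

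The main obstacle is $(1)\Rightarrow(2)$, where one must upgrade the underlying-level equivalence to $\D([1])$. My approach is to exploit the iterated cofiber construction: the cube generated by successively cofibering an object of $\D([1])$ produces a Puppe-type sequence in which $\cof^3$ agrees, up to a canonical shift inside $\D([1])$, with $\Sigma$ applied pointwise. Invertibility of $\Sigma$ on $\D(\bbone)$ therefore yields pointwise invertibility of $\cof^3$ on $\D([1])$, and via axiom~(Der2) combined with the adjunction $(\cof,\fib)$ this forces the unit and counit of $(\cof,\fib)$ to be isomorphisms. The technical crux is producing the Puppe comparison as a natural isomorphism inside $\D([1])$ coherently, rather than merely pointwise in $\D(\bbone)$; it is here that the full Kan-extension calculus of a derivator, rather than the mere pointwise data, has to be used.
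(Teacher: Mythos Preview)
The paper does not prove this theorem; it is quoted from \cite[Thm.~7.1]{gps:mayer} and \cite[Cor.~8.13]{gst:basic} as background in the review section, with no argument supplied. So there is no in-paper proof to compare against, and your outline is essentially a sketch of the arguments in those references: the equivalence $(3)\Leftrightarrow(4)$ via \autoref{thm:strongly-cocart} is exactly \cite[Cor.~8.13]{gst:basic}, and the Puppe-sequence approach to $(1)\Rightarrow(2)$ via $\cof^3\simeq\Sigma[1]$ is the strategy of \cite[Thm.~7.1]{gps:mayer}.

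That said, your step $(2)\Rightarrow(3)$ has a genuine gap. You write that a square is cartesian iff the comparison map from its $(0,0)$-vertex to the fiber of the lower cospan is invertible, and dually for cocartesian, and then assert that the equivalence $(\cof,\fib)$ makes ``these two conditions interchangeable.'' But the two comparison maps live in different places: one compares $X_{0,0}$ to a limit of the cospan $X_{1,0}\to X_{1,1}\leftarrow X_{0,1}$, the other compares a colimit of the span $X_{1,0}\leftarrow X_{0,0}\to X_{0,1}$ to $X_{1,1}$. Knowing that $\cof$ and $\fib$ on $\D([1])$ are inverse equivalences does not by itself identify these two comparisons; one needs an intermediate step relating an arbitrary square to a cofiber or fiber square (for instance, by showing that a square is cocartesian iff its ``total cofiber'' vanishes, and then using the $(\cof,\fib)$-equivalence to see that total cofiber and total fiber agree up to suspension). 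This is the content of the argument in \cite{gps:mayer}, and it is not as immediate as your one-line claim suggests. The rest of your outline, including the acknowledgment that the coherent Puppe identification is where the real work lies in $(1)\Rightarrow(2)$, is accurate.
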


An $n$-cube which is simultaneously strongly cartesian and strongly cocartesian is \textbf{strongly bicartesian}. In the case of $n=2$ this reduces to the classical notion of a \textbf{bicartesian square}. Strongly bicartesian $n$-cubes in stable derivators satisfy the 2-out-of-3 property with respect to composition and cancellation (see \cite[\S8]{gst:basic} for the case of $n$-cubes).

The natural domains for Kan extensions with parameters are given by shifted derivators in the sense of the following proposition.  This exponential construction is central to abstract representation theory.

\begin{prop}[{\cite[Thm.~1.25 and Prop.~4.3]{groth:ptstab}}]\label{prop:shifting}
Let \D be a derivator and let $B\in\cCat$. The $2$-functor
\[
\D^B\colon\cCat\op\to\cCAT\colon A\mapsto\D(B\times A)
\]
is again a derivator, the \textbf{derivator of coherent diagrams of shape~$B$}, which is pointed or stable as soon as \D is.
\end{prop}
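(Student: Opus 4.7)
The plan is to verify the four derivator axioms for $\D^B$ and then the inheritance of pointedness and stability, by systematically reducing each condition to the corresponding property of $\D$ via the identification $\D^B(A) = \D(B \times A)$. First, (Der1) would follow from the canonical isomorphism $B \times \coprod_i A_i \cong \coprod_i (B \times A_i)$ together with (Der1) for $\D$. For (Der2), I would note that a morphism $f$ in $\D^B(A)$ has its evaluation at $a \in A$ given by restriction along $\id_B \times a\colon B \to B \times A$, and further evaluating at $b \in B$ recovers $f_{(b,a)}$ in $\D(\bbone)$; so pointwise-isoness in $\D^B$ is equivalent to pointwise-isoness in $\D$ on $B \times A$, and (Der2) transfers. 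For (Der3), given $u\colon A \to A'$, the restriction functor of $\D^B$ is literally $(\id_B \times u)^\ast$ in $\D$, so I would take as Kan extensions in $\D^B$ the adjoints $(\id_B \times u)_!$ and $(\id_B \times u)_\ast$ provided by (Der3) for $\D$.

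The heart of the proof will be (Der4). Given $u\colon A \to A'$ and $a' \in A'$, the canonical transformation $\pi_! p^\ast \to (a')^\ast u_!$ in $\D^B$ translates to a map
\[
(\id_B \times \pi)_! (\id_B \times p)^\ast X \longrightarrow (\id_B \times a')^\ast (\id_B \times u)_! X
\]
in $\D(B)$, where $\pi\colon (u/a') \to \bbone$ and $p\colon (u/a') \to A$. By (Der2) for $\D$ it would suffice to check the map pointwise at every $b \in B$. Evaluating the right-hand side at $b$ amounts to applying $(b, a')^\ast$ to $(\id_B \times u)_! X$, and (Der4) for $\D$ identifies this with a colimit over the slice $((\id_B \times u)/(b, a'))$. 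The key combinatorial observation is the natural isomorphism
\[
((\id_B \times u)/(b, a')) \;\cong\; B_{/b} \times (u/a'),
\]
under which the projection to $B \times A$ factors through $\id_B \times p\colon B \times (u/a') \to B \times A$. Applying (Der4) for $\D$ to $\id_B \times \pi\colon B \times (u/a') \to B$ at $b$ would express the left-hand side as the same colimit over the same slice category, and the two canonical maps should agree by naturality of mates and standard base-change coherences.

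Finally, suppose $\D$ is pointed with zero object $0 \in \D(\bbone)$. Then $\pi_B^\ast 0 \in \D(B) = \D^B(\bbone)$ is both initial and terminal via the adjunctions $(\pi_B)_! \dashv \pi_B^\ast \dashv (\pi_B)_\ast$ together with the initiality resp.\ terminality of $0$ in $\D(\bbone)$, so $\D^B$ would be pointed. For stability, a square $X \in \D^B(\square) = \D(B \times \square)$ is cartesian in $\D^B$ iff the unit $X \to (\id_B \times i_\lrcorner)_\ast (\id_B \times i_\lrcorner)^\ast X$ is invertible; by (Der2) and the pointwise-over-$B$ calculation from the (Der4) step above, this would hold iff each restriction $b^\ast X \in \D(\square)$ is cartesian in $\D$, and dually for cocartesian squares, so stability of $\D$ would transport to $\D^B$. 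The main technical obstacle will be the careful bookkeeping in (Der4): I need to check that the two canonical maps produced by the pointwise formulas for $\D$ actually coincide rather than merely having matching source and target, which comes down to a standard mates-and-base-change diagram chase.
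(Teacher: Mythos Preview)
Your argument is correct and is essentially the standard proof; note, however, that the paper does not actually prove this proposition but simply cites \cite[Thm.~1.25 and Prop.~4.3]{groth:ptstab}, so there is no in-paper proof to compare against. Your verification of (Der1)--(Der3), the pointwise reduction of (Der4) via the slice identification $((\id_B\times u)/(b,a'))\cong B_{/b}\times(u/a')$, and the transfer of pointedness and stability through evaluation at each $b\in B$ (using the base change $b^\ast(\id_B\times i)_\ast\cong i_\ast b^\ast$ from \autoref{egs:htpy-exact}(ii)) all match the approach taken in the cited reference.
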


This shifting operation also applies to morphisms and natural transformations in either variable, thereby defining a $2$-functor
\[
\cCat\op\times\cDER\to\cDER\colon (A,\D)\mapsto \D^A.
\]
In abstract representation theory we are interested in suitable restrictions of related $2$-functors. To begin with, as special cases of morphisms of derivators preserving certain (co)limits (\cite[\S2.2]{groth:ptstab}) there are the following definitions.

\begin{defn}
\begin{enumerate}
\item A morphism of derivators is \textbf{right exact} if it preserves initial objects and cocartesian squares.
\item A morphism of derivators is \textbf{left exact} if it preserves terminal objects and cartesian squares.
\item A morphism of derivators is \textbf{exact} if it is right exact and left exact.
\end{enumerate}
\end{defn}

A morphism between stable derivators is right exact if and only if it is left exact if and only if it is exact. In particular, adjunctions and equivalences between stable derivators give rise to exact morphisms. (Adjunctions and equivalences of derivators are defined internally to the 2-category $\cDER$; see \cite[\S2]{groth:ptstab} for details including explicit reformulations.)

Identity morphisms are exact and exact morphisms are closed under compositions, and there is thus the $2$-category $\cDER_{\mathrm{St},\mathrm{ex}}\subseteq\cDER$ of stable derivators, exact morphisms, and arbitrary natural transformations. 
Hence, for every $A\in\cCat$ we obtain an induced $2$-functor $(-)^A\colon\cDER\to\cDER$ which can be restricted to
\[
(-)^A\colon\cDER_{\mathrm{St},\mathrm{ex}}\to\cDER.
\]

\begin{defn}[{\cite[Def.~5.1]{gst:basic}}]\label{defn:sse}
Two small categories $A$ and $A'$ are \textbf{strongly stably equivalent}, in notation $A\sse A'$, if there is a pseudo-natural equivalence between the $2$-functors
\[
\Phi\colon(-)^A\simeq (-)^{A'}\colon\cDER_{\mathrm{St},\mathrm{ex}}\to\cDER.
\]
Such a pseudo-natural equivalence is called a \textbf{strong stable equivalence}.
\end{defn}

This definition makes precise the idea that the categories $A$ and $A'$ have the same representation theories in arbitrary stable derivators. More formally, a strong stable equivalence $\Phi\colon A\sse  A'$ consists of 
\begin{enumerate}
\item an equivalence of derivators $\Phi_\D\colon\D^A\simeq\D^{A'}$ for \emph{every} stable derivator \D and
\item associated to every exact morphism of stable derivators $F\colon\D\to\E$ a natural isomorphism 
$\gamma_F\colon F\circ \Phi_\D\to \Phi_E\circ F,$
\[
\xymatrix{
\D^A\ar[r]^-{\Phi_\D}_-\simeq\ar[d]_-F\drtwocell\omit{\cong}&\D^{A'}\ar[d]^-F\\
\E^A\ar[r]^-\simeq_-{\Phi_{\E}}&\E^{A'},
}
\]
\end{enumerate}
satisfying the usual coherence properties of a pseudo-natural transformation.

The motivation for this definition is the following example of the shifting operation; see~\cite[\S5]{gst:basic}. 

\begin{eg}\label{eg:grothendieck-shift}
Let \cA be a Grothendieck abelian category and let $B\in\cCat$. There is an equivalence of derivators
\[
\D_\cA^B\simeq\D_{\cA^B}
\]
\end{eg}

In particular, if $B,B'$ are strongly stably equivalent,  then there is a chain of equivalences of derivators
\[
\D_{\cA^B}\simeq\D_\cA^B\simeq\D_\cA^{B'}\simeq\D_{\cA^{B'}}.
\]
Specializing to the Grothendieck abelian category of modules over a ring $R$ and assuming that $B=Q,B'=Q'$ are quivers with finitely many vertices, we obtain equivalences
\[
\D_{RQ}\simeq\D_{RQ'}
\]
of the derivators of the respective path algebras. Since equivalences of derivators are exact, this yields exact equivalences of derived categories
\[
D(RQ)\stackrel{\Delta}{\simeq}D(RQ'),
\]
showing that strongly stably equivalent quivers are derived equivalent over arbitrary rings. A priori, however, it is a much stronger result if we know that two quivers are strongly stably equivalent, since this means that the quivers have the same homotopy theories of abstract representations. We expand a bit on this in \S\ref{sec:applications}.

\section{Review of homotopy exact squares}
\label{sec:review-htpy}

In this section we review some results concerning the calculus of homotopy exact squares. This calculus is arguably the most important technical tool in the theory of derivators and it allows us to establish many useful manipulation rules for Kan extensions in derivators. For more details see for example \cite{ayoub:I-II,maltsiniotis:htpy-exact} and \cite{groth:ptstab,gps:mayer,gst:basic}.

To begin with let us consider a natural transformation $\alpha\colon up\to vq$ living in a square of small categories
\begin{equation}
  \vcenter{\xymatrix{
      D\ar[r]^p\ar[d]_q \drtwocell\omit{\alpha} &
      A\ar[d]^u\\
      B\ar[r]_v &
      C.
    }}
\label{eq:htpyexact}
\end{equation}
The square~\eqref{eq:htpyexact} is \textbf{homotopy exact} if one of the canonical mates
\begin{gather}
  q_! p^* \to q_! p^* u^* u_! \xto{\alpha^*} q_! q^* v^* v_! \to v^* u_!  \mathrlap{\qquad\text{and}}\label{eq:hoexmate1'}\\
  u^* v_* \to p_* p^* u^* v_* \xto{\alpha^*} p_* q^* v^* v_* \to p_* q^*\label{eq:hoexmate2'}
\end{gather}
is a natural isomorphism. It turns out that \eqref{eq:hoexmate1'} is an isomorphism if and only if \eqref{eq:hoexmate2'} is an isomorphism.

Using this terminology, note that axiom (Der4) from~\autoref{defn:derivator} precisely says that slice squares \eqref{eq:Der4} are homotopy exact.
Although it may seem from the definition that the notion of homotopy exactness depends on the theory of derivators, this is only seemingly the case. Homotopy exact squares can be characterized by means of the classical homotopy theory of (diagrams of) topological spaces. In fact, a square is homotopy exact if and only if the canonical mate is an isomorphism for the homotopy derivator of topological spaces, and this even admits a combinatorial reformulation; see \cite[\S3]{gps:mayer}.

For later reference, we collect a few additional examples of homotopy exact squares and make explicit what they tell us about Kan extensions.

\begin{egs}\label{egs:htpy-exact} 
~
\begin{enumerate}
\item \emph{Kan extensions along fully faithful functors are fully faithful.} If $u\colon A\to B$ is fully faithful, then the square
\begin{equation}
\vcenter{
\xymatrix{
A\ar[r]^-\id\ar[d]_-\id&A\ar[d]^-u\\
A\ar[r]_-u&B
}}
\end{equation}
is homotopy exact, which is to say that the unit $\eta\colon\id\to u^\ast u_!$ and the counit $\epsilon\colon u^\ast u_\ast\to\id$ are isomorphisms  (\cite[Proposition~1.20]{groth:ptstab}). Thus, $u_!,u_\ast\colon\D(A)\to\D(B)$ are fully faithful.
\item \emph{Kan extensions and restrictions in unrelated variables commute.} Given functors $u\colon A\to B$ and $v\colon C\to D$ then the commutative square
\begin{equation}
\vcenter{
\xymatrix{
A\times C\ar[r]^-{u\times\id}\ar[d]_-{\id\times v}&B\times C\ar[d]^-{\id\times v}\\
A\times D\ar[r]_-{u\times\id}&B\times D
}}
\end{equation}
is homotopy exact (\cite[Proposition~2.5]{groth:ptstab}). Thus, the canonical mate transformation
$(\id\times v)_!(u\times \id)^\ast\to(u\times \id)^\ast(1\times v)_!$ is an isomorphism and similarly for right Kan extensions.
\item \emph{Right adjoint functors are \textbf{homotopy final}.} If $u\colon A\to B$ is a right adjoint, then the square
\[
\xymatrix{
A\ar[r]^-u\ar[d]_-{\pi_A}\drtwocell\omit{\id}&B\ar[d]^-{\pi_B}\\
\bbone\ar[r]_-\id&\bbone
}
\]
is homotopy exact, i.e., the canonical mate $\mathrm{colim}_Au^\ast\to\mathrm{colim}_B$ is an isomorphism (\cite[Proposition~1.18]{groth:ptstab}). In particular, if $b\in B$ is a terminal object, then there is a canonical isomorphism $b^\ast\cong\colim_B$.
\item \emph{Homotopy exact squares are compatible with pasting.} Since the passage to the canonical mates \eqref{eq:hoexmate1'} and \eqref{eq:hoexmate2'} is functorial with respect to horizontal and vertical pasting, such pastings of homotopy exact squares are again homotopy exact (\cite[Lemma~1.14]{groth:ptstab}).
\end{enumerate}
\end{egs}

It follows from \autoref{egs:htpy-exact}(ii) that there are Kan extension morphisms of derivators. In fact, given a derivator~\D and a functor $u\colon A\to B$, there are adjunctions of derivators given by \emph{parametrized Kan extensions},
\begin{equation}
(u_!,u^\ast)\colon\D^A\rightleftarrows\D^B\qquad\text{and}\qquad
(u^\ast,u_\ast)\colon\D^B\rightleftarrows\D^A.\label{eq:Kan-adjunction}
\end{equation}
If $u$ is fully faithful, then $u_!,u_\ast\colon\D^A\to\D^B$ are fully faithful morphisms of derivators and as such they induce equivalences onto their respective essential images. In particular, these essential images are again derivators (\cite[\S3]{gst:basic}).

The point of the following lemma is that to check whether an object $X \in \D^B$ is in the essential image of $u_!$, it suffices to test objects in $B-u(A)$ only.

\begin{lem}[{\cite[Lemma~1.21]{groth:ptstab}}]\label{lem:ff-essim}
Let \D be a derivator and $u\colon A\to B$ a fully faithful functor between small categories. A coherent diagram $X\in\D^B$ lies in the essential image of $u_!\colon\D^A\to\D^B$ if and only if $\epsilon_b\colon u_! u^\ast(X)_b\to X_b$ is an isomorphism for all $b\in B-u(A)$.
\end{lem}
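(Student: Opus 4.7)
The plan is to split the biconditional and reduce the ``if'' direction to a pointwise check via (Der2), exploiting the fact that full faithfulness of $u$ forces the unit $\eta\colon \id \to u^\ast u_!$ to be an isomorphism of morphisms of derivators, as recorded in \autoref{egs:htpy-exact}(i).

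For the ``only if'' direction, suppose $X \cong u_! Y$ for some $Y \in \D^A$. Applying $u^\ast$ and using that $\eta\colon Y \to u^\ast u_! Y$ is an isomorphism, we obtain $u^\ast X \cong Y$, and transporting back gives $u_! u^\ast X \cong u_! Y \cong X$. A standard triangle-identity check shows that under these identifications the counit $\epsilon\colon u_! u^\ast X \to X$ becomes the identity, so $\epsilon$ is an isomorphism in $\D^B$; in particular $\epsilon_b$ is invertible for every $b \in B$, not merely for $b \in B-u(A)$.

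For the ``if'' direction, it suffices to prove that $\epsilon\colon u_! u^\ast X \to X$ is an isomorphism in $\D^B$, for then $X$ lies in the essential image of $u_!$. By (Der2), this reduces to verifying that $\epsilon_b$ is an isomorphism in $\D(\bbone)$ for every $b \in B$. For $b \in B - u(A)$ this is the hypothesis. For $b = u(a)$ with $a \in A$, I invoke the triangle identity $u^\ast \epsilon \circ \eta_{u^\ast} = \id_{u^\ast}$. Since $\eta\colon \id \to u^\ast u_!$ is a natural isomorphism (full faithfulness of $u$), the component $\eta_{u^\ast X}$ is invertible, and two-out-of-three forces $u^\ast \epsilon$ to be an isomorphism in $\D^A$. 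Evaluating at $a$ gives $(u^\ast \epsilon)_a = \epsilon_{u(a)}$, which is therefore invertible.

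The argument is essentially formal manipulation with the unit--counit data; the only slightly subtle point is the combination of the triangle identity with (Der2) to restrict the pointwise check to $B-u(A)$. No homotopy-exactness machinery beyond the full-faithfulness statement from \autoref{egs:htpy-exact}(i) is required.
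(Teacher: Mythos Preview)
Your argument is correct and is exactly the standard proof: use (Der2) to reduce to a pointwise check, handle $b\in B-u(A)$ by hypothesis, and handle $b=u(a)$ via the triangle identity combined with the invertibility of $\eta$ from \autoref{egs:htpy-exact}(i). The paper does not give its own proof of this lemma but cites \cite[Lemma~1.21]{groth:ptstab}, where the same reasoning appears.
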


This lemma takes a particular simple form for certain Kan extensions in pointed derivators. Recall that a fully faithful functor $u\colon A\to B$ is a \textbf{sieve} if for every morphism $b\to u(a')$ in $B$ with target in the image of $u$ it follows that $b=u(a)$ for some $a\in A$. There is the dual notion of a \textbf{cosieve}.

\begin{prop}[{\cite[Prop.~3.6]{groth:ptstab}}]\label{prop:ext-by-zero}
Let \D be a pointed derivator and $u\colon A\to B$ a sieve. The morphism $u_\ast\colon\D^A\to\D^B$ is fully faithful and $X\in\D^B$ lies in the essential image of $u_\ast$ if and only if $u_b\cong 0$ for all $b\in B-u(A)$.
\end{prop}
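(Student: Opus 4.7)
The plan is to prove the two assertions separately, using the characterization of the essential image (dual of \autoref{lem:ff-essim}) together with the pointwise formula for right Kan extensions (Der4).

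For fully faithfulness, since $u\colon A\to B$ is a sieve it is in particular fully faithful, so \autoref{egs:htpy-exact}(i) applied to right Kan extensions says that the counit $\epsilon\colon u^\ast u_\ast\to\id$ is an isomorphism. Passing to parametrized Kan extensions via \eqref{eq:Kan-adjunction}, it follows that $u_\ast\colon\D^A\to\D^B$ is a fully faithful morphism of derivators.

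For the characterization of the essential image, the key point is to compute $u_\ast u^\ast(X)_b$ for $b\in B-u(A)$. By the right Kan extension half of (Der4), there is a canonical isomorphism
\[
u_\ast u^\ast(X)_b\;\iso\;\lim_{(b/u)}q^\ast u^\ast X.
\]
Since $u$ is a sieve and $b\notin u(A)$, there can be no morphism $b\to u(a)$ in $B$ for any $a\in A$ (such a morphism would force $b$ to lie in the image of $u$). Hence the slice category $(b/u)$ is empty, and the limit above is a limit over the empty category, i.e., a terminal object. In a pointed derivator, terminal and initial objects coincide, so $u_\ast u^\ast(X)_b\cong 0$ for every $b\in B-u(A)$.

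Combining these two observations finishes the proof. By (the dual of) \autoref{lem:ff-essim}, an object $X\in\D^B$ lies in the essential image of $u_\ast$ if and only if the unit $\eta_b\colon X_b\to u_\ast u^\ast(X)_b$ is an isomorphism for every $b\in B-u(A)$. Since the right-hand side is zero by the calculation above, this condition is equivalent to $X_b\cong 0$ for all $b\in B-u(A)$, as claimed. The only nontrivial step is the vanishing of the slice $(b/u)$ for $b$ outside the image of a sieve, which is immediate from the definition, so there is no serious obstacle here.
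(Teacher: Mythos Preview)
Your proof is correct. Note that the paper does not actually give its own proof of this proposition: it is stated as a citation of \cite[Prop.~3.6]{groth:ptstab} and left unproved in the text. Your argument is the standard one (and essentially the one given in the cited reference): full faithfulness follows from \autoref{egs:htpy-exact}(i), and the essential image is identified by combining the dual of \autoref{lem:ff-essim} with the observation that for a sieve $u$ and $b\in B-u(A)$ the slice $(b/u)$ is empty, so that (Der4) forces $u_\ast u^\ast(X)_b$ to be terminal, hence zero.
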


We refer to right Kan extension morphisms along sieves as \textbf{right extensions by zero}. Dually, left Kan extensions along cosieves are \textbf{left extensions by zero}.

\begin{rmk}\label{rmk:ext-by-zero-unpointed}
If \D is not pointed, then \autoref{prop:ext-by-zero} yields right extensions by terminal objects and left extensions by initial objects in the obvious sense (\cite[Prop.~1.23]{groth:ptstab}).
\end{rmk}

By \autoref{egs:htpy-exact} there is an easy criterion guaranteeing that Kan extensions are fully faithful. The case of restrictions is more subtle. Inspired by the notion of a \emph{homological epimorphism} introduced by Geigle and Lenzing \cite[\S4]{GL91} there is the following definition (see \cite[\S6]{gst:tree} and, in particular, Remark~6.4 in~\emph{loc.\!~cit.}).

\begin{defn}\label{defn:htpy-epi}
A functor $u\colon A\to B$ is a \textbf{homotopical epimorphism} if for every derivator \D the restriction functor $u^\ast\colon\D(B)\to\D(A)$ is fully faithful.
\end{defn}

If $u$ is a homotopical epimorphism then $u^\ast\colon\D^B\to\D^A$ induces an equivalence onto its essential image. Basic examples and closure properties are collected in~\cite[\S\S6-7]{gst:tree}. Here it suffices to note that $u\colon A\to B$ is a homotopical epimorphism if and only if the square
\[
\xymatrix{
A\ar[r]^-u\ar[d]_-u&B\ar[d]^-\id\\
B\ar[r]_-\id&B
}
\]
is homotopy exact. We will get back to this in \S\ref{sec:glue-epi} and \S\ref{sec:detect}.

\section{A pictorial guide to general reflection morphisms}
\label{sec:guide}

In this section we describe the strategy behind the construction of the general reflection morphisms as carried out in~\S\ref{sec:reflection-sep} and~\S\ref{sec:reflection}. While some main steps follow the lines of the construction in~\cite[\S5]{gst:tree}, they have to be adapted significantly to cover the more general class of examples we consider in this paper.

Let $C\in\cCat$ and let $C^-$ be the category obtained from~$C$ by freely attaching a new object~$v$ together with $n$ morphisms from~$v$ to objects in~$C$; see~\autoref{fig:adjoining}. Performing a similar construction but this time adding morphisms pointing to~$v$ we obtain the category~$C^+$. Thus, the categories $C^-,C^+$ are obtained from~$C$ by attaching a source and sink, respectively, to the same objects in~$C$, and the picture to have in mind is as in \autoref{fig:adjoining}.

One of our main goals is to show that for \emph{every} small category~$C$ the categories $C^-$ and $C^+$ are strongly stably equivalent, i.e., that for every stable derivator~\D there is an equivalence $\D^{C^-}\simeq\D^{C^+}$ which is pseudo-natural with respect to exact morphisms (\autoref{defn:sse}). Mimicking the classical construction of reflection functors~\cite{BGP:Coxeter}, we obtain reflection morphisms $s^-\colon\D^{C^-}\to\D^{C^+}$ and $s^+\colon\D^{C^+}\to\D^{C^-}$, which we show to define such a strong stable equivalence. As a first approximation, the rough strategy behind the construction of $s^-$ and $s^+$ is as follows (see~\autoref{fig:rough-strategy}).

\begin{figure}
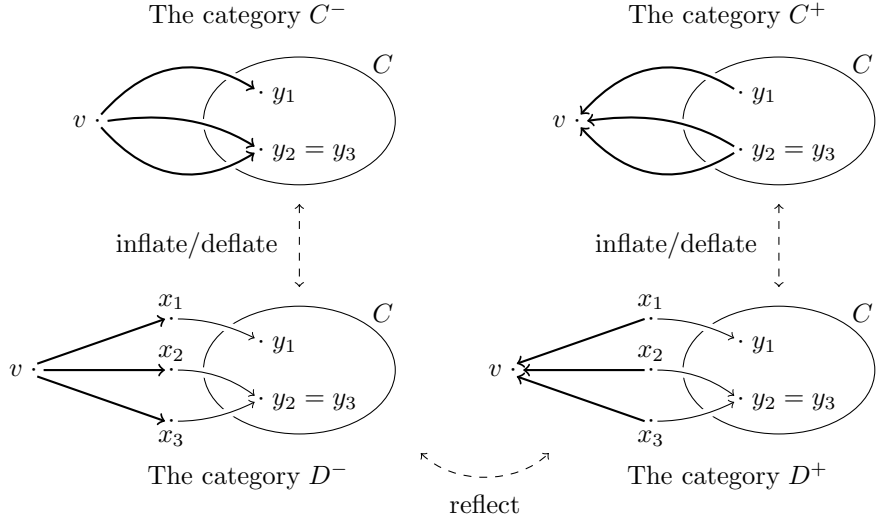

\centering
\TikzFigGlobalStrategy
\caption{Rough strategy behind construction of reflection functors.}
\label{fig:rough-strategy}
\end{figure}

\begin{enumerate} \label{page:rough-strategy}
\item Take a representation of~$C^-$ and separate the morphisms adjacent to the new source by inserting new morphisms. One point being that the shape $D^-$ of this new representation contains an isomorphic copy of the source of valence~$n$. Moreover, we know precisely which representations of~$D^-$ arise this way, namely those which populate the new morphisms by isomorphisms.
\item Show that the reflection morphisms for sources and sinks of valence~$n$ as constructed in \cite{gst:tree} yield similar reflection morphisms in this more general situation. 
Thus, if $D^+$ is the category obtained from~$D^-$ by turning the source into a sink, then we construct certain morphisms of derivators $\D^{D^-}\to\D^{D^+}$, which restrict to suitable equivalences. We expand on this step further below. 
\item Finally, it suffices to show that we can restrict representations of $D^+$ to representations of~$C^+$, thereby possibly identifying some of the sources of morphisms adjacent to the new sink. If we only consider representations of~$D^+$ satisfying certain exactness properties, then this step induces an equivalence of derivators. Note that the situation in this step differs from the one in step~(i) since here the arrows point in different directions. It turns out that this step is more involved than the similarly looking first step.
\end{enumerate}

The first and third steps are taken care of in \S\S\ref{sec:detect}-\ref{sec:reflection}, while the second step is addressed in \S\S\ref{sec:glue}-\ref{sec:reflection-sep}. We now expand on this second step, which performs the actual reflection and is motivated by the classical reflection functors from representation theory (see~\cite{gabriel:unzerlegbare,BGP:Coxeter,happel:dynkin} and also the discussion in~\cite[\S5]{gst:tree}). Let $v\to x_i,i=1,\ldots,n,$ be the morphisms in~$D^-$ which are adjacent to the source~$v$. Given an abstract representation~$X\in\D^{D^-}$, we consider the morphism $X_v\to\bigoplus_{i=1}^n X_{x_i}$ induced by the structure maps and pass to its cofiber. However, in order to obtain a representation of the reflected category~$D^+$, we have to take some care in setting up coherent biproduct diagrams appropriately.

To begin with, we recall from \cite[\S4\text{ and }\S7]{gst:tree} that finite biproduct objects in stable derivators can be modeled by $n$-cubes of length two. In more detail, let us consider the following diagram in~$\cCat$,
\begin{equation}
\vcenter{
\xymatrix{
n\cdot\bbone=[1]^n_{=n-1}\ar[r]^-{i_1}&[1]^n_{\geq n-1}\ar[r]^-{i_2}&[1]^n\ar[r]^-{i_3}& I\ar[r]^-{i_4}& [2]^n\ar[r]^-q&R^n,
}
}
\label{eq:biproducts}
\end{equation}
in which we ignore the functor $q\colon[2]^n\to R^n$ for now. The functors $i_1,i_2$ are the obvious fully faithful inclusion functors, and the composition $i_4i_3\colon[1]^n\to[2]^n$ is the inclusion as the $n$-cube $[1,2]^n,$ i.e., the convex hull of $(1,\ldots,1),(2,\ldots,2)\in[2]^n$. Let $I\subseteq[2]^n$ be the full subcategory spanned by $[1,2]^n$ and the corners
\[
(0,2,\ldots,2),\quad(2,0,2,\ldots,2),\quad\ldots,\quad(2,\ldots,2,0),
\]
and let $i_3\colon[1]^n\to I$ and $i_4\colon I\to [2]^n$ be the corresponding factorization of $i_4i_3$. The associated Kan extension morphisms
\begin{equation}
\D^{n\cdot\bbone}=\D^{[1]^n_{=n-1}}\stackrel{(i_1)_\ast}{\to}\D^{[1]^n_{\geq n-1}}\stackrel{(i_2)_\ast}{\to}
\D^{[1]^n}\stackrel{(i_3)_!}{\to}\D^I\stackrel{(i_4)_\ast}{\to}\D^{[2]^n}
\label{eq:biproducts-II}
\end{equation}
are fully faithful and the essential image is in the stable case as follows. For every stable derivator \D we denote by $\D^{[2]^n,\mathrm{ex}}\subseteq\D^{[2]^n}$ the full subderivator spanned by the diagrams such that
\begin{enumerate}
\item all subcubes are strongly bicartesian,
\item the values at all corners are trivial, and
\item the maps $(i_1,\ldots,i_{k-1},0,i_{k+1},\ldots,i_n)\to (i_1,\ldots,i_{k-1},2,i_{k+1},\ldots,i_n)$ are sent to isomorphisms for all $i_1,\ldots,i_{k-1},i_{k+1},\ldots,i_n$ and $k$.
\end{enumerate}
We note that (iii) is a consequence of (i) and (ii) together with isomorphisms being stable under base change (\cite[Prop.~3.12]{groth:ptstab}), but it is included here for emphasis. As discussed in \cite[\S4]{gst:tree} such diagrams model coherent finite biproduct diagrams together with all the inclusion and projection morphisms.
The following result justifies that we refer to $\D^{[2]^n,\exx}$ as a derivator.

\begin{prop}[{\cite[Proposition~4.9]{gst:tree}}]\label{prop:biproducts}
Let \D be a stable derivator and $n\geq 2$. The morphisms \eqref{eq:biproducts-II} are fully faithful and induce an equivalence $\D^{n\cdot\bbone}\simeq\D^{[2]^n,\exx}$, which is pseudo-natural with respect to exact morphisms. The derivator $\D^{[2]^n,\exx}$ is the \textbf{derivator of biproduct $n$-cubes}.
\end{prop}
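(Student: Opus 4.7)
Full faithfulness of the composition is immediate: each of $i_1, i_2, i_3, i_4$ is a fully faithful inclusion of small categories, so by \autoref{egs:htpy-exact}(i) each of the four Kan extensions in \eqref{eq:biproducts-II} is a fully faithful morphism of derivators, and hence so is their composition. The remaining work is to identify the essential image with $\D^{[2]^n, \exx}$ and to verify pseudo-naturality.

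For the image containment, I would analyze each Kan extension in turn. The functor $i_1$ is a sieve, since the only object of $[1]^n_{\geq n-1}$ outside its image is the terminal vertex, from which no morphism reaches any face; by \autoref{prop:ext-by-zero}, $(i_1)_*$ therefore assigns $0$ to $(1,\ldots,1)$. Dually, $i_3$ is a cosieve: the adjoined vertices $(2,\ldots,0,\ldots,2)$ of $I$ receive no morphism from the shifted block $[1,2]^n \subset [2]^n$, so $(i_3)_!$ sends these corners to $0$, and these values survive $(i_4)_*$ by fully-faithfulness. This establishes axiom (ii). For axiom (i), I would argue that the composition $(i_2)_* (i_1)_*$ produces a strongly cartesian cube on $[1]^n$ with a zero at the top vertex (and hence strongly bicartesian by stability), and then show that $(i_4)_* (i_3)_!$ propagates strong bicartesianness to every subcube of $[2]^n$ by pasting slice homotopy-exact squares from (Der4) against the trivial corners from (ii); \autoref{thm:strongly-cocart} reduces the check to subsquares. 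Axiom (iii) then follows from (i) and (ii) using the 2-out-of-3 property for strongly bicartesian $n$-cubes.

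Essential surjectivity is proved dually: given $X \in \D^{[2]^n, \exx}$, I would apply \autoref{lem:ff-essim} (and its dual) iteratively, working right-to-left through \eqref{eq:biproducts-II}. At each stage the defining axioms of $\D^{[2]^n, \exx}$ supply precisely the conditions needed to place the restriction of $X$ in the essential image of the next Kan extension: axiom (ii) underwrites the extension-by-zero criteria for $(i_4)_*$ and $(i_3)_!$, while axiom (i) provides the cartesian/cocartesian criteria for $(i_2)_*$ and $(i_1)_*$.

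Pseudo-naturality with respect to exact morphisms follows because each Kan extension in \eqref{eq:biproducts-II} is computed pointwise by finite homotopy (co)limits, which in the stable setting reduce to iterations of extensions by zero and strongly (co)cartesian squares; all of these are preserved by exact morphisms of stable derivators by definition, and the defining conditions of $\D^{[2]^n, \exx}$ are preserved for the same reason. The main obstacle in this plan is the detailed verification of axiom (i): although \autoref{thm:strongly-cocart} reduces strong bicartesianness to plain subsquares, one must still carefully handle the many subsquares straddling the interface between the biproduct block $[1,2]^n$ and the corner-gluing region of $[2]^n$, and each demands a pasting argument reducing it to a homotopy exact slice square combined with the trivial corner values.
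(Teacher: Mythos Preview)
Your proposal is correct and follows essentially the same approach as the paper. Since the proposition is quoted from \cite{gst:tree}, the paper does not reprove it directly, but the proof of the closely analogous \autoref{prop:step-2} spells out the same four-step analysis you give: $i_1$ is a sieve so $(i_1)_\ast$ is right extension by zero, $(i_2)_\ast$ adds a strongly cartesian $n$-cube, $i_3$ is a cosieve so $(i_3)_!$ is left extension by zero, and $(i_4)_\ast$ adds further strongly cartesian $n$-cubes; pseudo-naturality is then argued exactly as you do, via preservation of extensions by zero and strongly (co)cartesian cubes by exact morphisms. The only presentational difference is that the paper phrases each step as an equivalence onto an explicitly described full subderivator, whereas you separate the ``image contained in $\D^{[2]^n,\exx}$'' and ``essential surjectivity'' directions; these are equivalent formulations.
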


Note that property (iii) of the characterization of biproduct $n$-cubes suggests that such diagrams arise via restriction from a `larger shape where the length two morphisms are invertible'. This turns out to be true and will be taken care of by the remaining functor in~\eqref{eq:biproducts}.

\begin{figure}
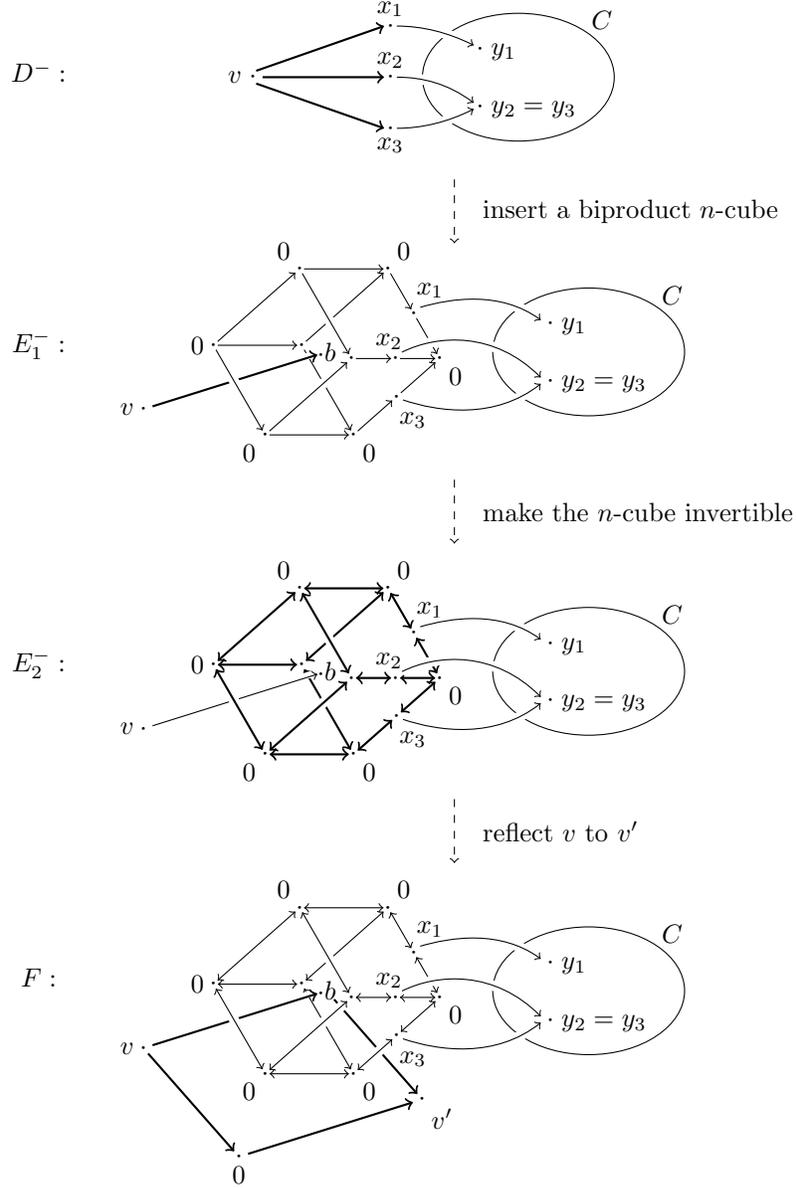

\centering
\TikzFigReflectionUnraveled
\caption{Intermediate steps in construction of reflection functors. Changes from step to step are drawn in bold.}
\label{fig:detailed-strategy}
\end{figure}

In fact, let $p\colon[2]\to R$ be the localization functor inverting the length two morphism $0\to 2$ in $[2]$, so that $R$ corepresents pairs of composable morphisms such that the composition is an isomorphism; see~\cite[\S7]{gst:tree} for a precise description of~$R$. We know that $p$ is a homotopical epimorphism~\cite[Prop.~7.3]{gst:tree}, and it is completely formal to see that the same is true for the $n$-fold product $q\colon[2]^n\to R^n$.

\begin{cor}[{\cite[Corollary~7.4]{gst:tree}}]\label{cor:inv-hyper}
Let \D be a derivator and $n\geq 1$. The functor $q\colon [2]^n\to R^n$ is a homotopical epimorphism and $q^\ast\colon \D^{R^n}\to \D^{[2]^n}$ induces an equivalence onto the full subderivator of $\D^{[2]^n}$ spanned by all diagrams $X$ such that
\[
X_{i_1,\ldots,i_{k-1},0,i_{k+1},\ldots,i_n}\to X_{i_1,\ldots,i_{k-1},2,i_{k+1},\ldots,i_n}
\]
is an isomorphism for all $i_1,\ldots,i_{k-1},i_{k+1},\ldots,i_n$ and $k$.
\end{cor}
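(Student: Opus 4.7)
The plan is to reduce to the case $n=1$, which is handled by \cite[Prop.~7.3]{gst:tree}, by decomposing $q\colon[2]^n\to R^n$ as an iterated composite in which each factor inverts the length-two morphism in a single coordinate. The clean way to organize this is to show that homotopical epimorphisms interact well with the shifting operation of \autoref{prop:shifting}.

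The first step is to prove the parametrization lemma: if $u\colon A\to B$ is a homotopical epimorphism and $C\in\cCat$ is arbitrary, then $u\times\id_C\colon A\times C\to B\times C$ is again a homotopical epimorphism. Up to swapping product factors, $(u\times\id_C)^*\colon\D(B\times C\times K)\to\D(A\times C\times K)$ is precisely $u^*\colon\D^{C\times K}(B)\to\D^{C\times K}(A)$. Since the shifted $\D^{C\times K}$ is itself a derivator by \autoref{prop:shifting}, this restriction is fully faithful by the definition of a homotopical epimorphism applied to $\D^{C\times K}$. Combined with the trivial closure of homotopical epimorphisms under composition and with the one-dimensional case \cite[Prop.~7.3]{gst:tree}, this shows that
\[
q = (\id\times p)\circ(\id\times p\times\id)\circ\cdots\circ(p\times\id)
\]
is a homotopical epimorphism, so $q^*$ is fully faithful.

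For the image characterization, I would argue by induction on $n$. Write $\D^{[2]^n,\mathrm{loc}}\subseteq\D^{[2]^n}$ for the full subderivator spanned by diagrams that send each length-two morphism in each coordinate direction to an isomorphism. Inclusion $q^*\bigl(\D^{R^n}\bigr)\subseteq\D^{[2]^n,\mathrm{loc}}$ is immediate, since restriction along $q$ inverts precisely these morphisms. For the reverse inclusion, factor $q=(\id_{[2]^{n-1}}\times p)\circ(q_{n-1}\times\id_R)$ where $q_{n-1}\colon[2]^{n-1}\to R^{n-1}$ is the analogous localization in dimension $n-1$. Applying the inductive hypothesis to the shifted derivator $\D^R$ yields that $(q_{n-1}\times\id_R)^*$ is an equivalence of $\D^{R^n}$ onto the full subderivator of $\D^{[2]^{n-1}\times R}$ characterized by the length-two iso condition in coordinates $1,\ldots,n-1$. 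Applying the base case to the shifted derivator $\D^{[2]^{n-1}}$ yields that $(\id_{[2]^{n-1}}\times p)^*$ is an equivalence onto the subderivator of $\D^{[2]^n}$ characterized by the length-two iso condition in the final coordinate.

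The remaining task, and the only non-trivial point, is to check that composing these two equivalences gives an equivalence onto $\D^{[2]^n,\mathrm{loc}}$. This is the main bookkeeping obstacle: one must verify that the restriction $(\id_{[2]^{n-1}}\times p)^*$ leaves the morphisms in coordinates $1,\ldots,n-1$ formally unchanged, so that the iso condition in those coordinates transports correctly from $\D^{[2]^{n-1}\times R}$ to $\D^{[2]^n}$, while the new iso condition in the last coordinate is precisely what the base case contributes. Given this, the composite image consists of exactly those $X\in\D^{[2]^n}$ whose length-two morphisms are invertible in every coordinate, which is $\D^{[2]^n,\mathrm{loc}}$. Pseudo-naturality with respect to exact morphisms is automatic, since every functor in sight is a restriction morphism and these commute on the nose with arbitrary morphisms of derivators.
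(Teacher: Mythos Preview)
Your proof is correct and is precisely the ``completely formal'' argument the paper alludes to just before the corollary: reduce to the one-variable case \cite[Prop.~7.3]{gst:tree} by observing that homotopical epimorphisms are stable under $(-)\times\id_C$ via the shifting operation of \autoref{prop:shifting}, and then track the essential images through the factorization of $q$ one coordinate at a time. The only cosmetic point is that your pseudo-naturality remark, while true, is not part of the stated corollary.
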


Thus, in the stable case, there is the following result concerning the morphisms
\begin{equation}
\D^{n\cdot\bbone}=\D^{[1]^n_{=n-1}}\stackrel{(i_1)_\ast}{\to}\D^{[1]^n_{\geq n-1}}\stackrel{(i_2)_\ast}{\to}
\D^{[1]^n}\stackrel{(i_3)_!}{\to}\D^I\stackrel{(i_4)_\ast}{\to}\D^{[2]^n}\stackrel{q^\ast}{\to}\D^{R^n}.
\label{eq:biproducts-III}
\end{equation}
Let $\D^{R^n,\exx}\subseteq\D^{R^n}$ be the full subderivator spanned by all $X\in\D^{R^n}$ such that $q^\ast X$ is a biproduct $n$-cube, i.e., such that $q^\ast X\in\D^{[2]^n,\exx}$.

\begin{cor}[{\cite[Corollary~7.5]{gst:tree}}]\label{cor:inv-biprod}
Let~\D be a  stable derivator and $n\geq 2$. The morphisms \eqref{eq:biproducts-III} are fully faithful and induce an equivalence $\D^{n\cdot\bbone}\simeq\D^{R^n,\mathrm{ex}}$, which is pseudo-natural with respect to exact morphisms. The derivator $\D^{R^n,\exx}$ is the \textbf{derivator of invertible biproduct $n$-cubes}.
\end{cor}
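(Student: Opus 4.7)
The plan is to glue together \autoref{prop:biproducts} and \autoref{cor:inv-hyper} across the intermediate derivator $\D^{[2]^n,\exx}$. By \autoref{prop:biproducts}, the first four morphisms in \eqref{eq:biproducts-III} are fully faithful and their composite is a pseudo-natural equivalence $\D^{n\cdot\bbone}\simeq\D^{[2]^n,\exx}$. By \autoref{cor:inv-hyper}, the homotopical epimorphism $q\colon[2]^n\to R^n$ makes $q^\ast$ fully faithful, inducing a pseudo-natural equivalence of $\D^{R^n}$ onto the full subderivator $\D^{[2]^n,\mathrm{iso}}\subseteq\D^{[2]^n}$ spanned by diagrams whose length-two morphisms are isomorphisms. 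All full faithfulness assertions listed in the corollary follow directly from these two inputs.

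The only content beyond bookkeeping is the inclusion $\D^{[2]^n,\exx}\subseteq\D^{[2]^n,\mathrm{iso}}$, which is precisely property (iii) in the definition of biproduct $n$-cubes. As noted after that definition, this property is redundant: it follows from (i) and (ii) together with the stability of isomorphisms under base change, applied to a suitable strongly bicartesian subcube one of whose parallel edges is the identity of a zero object. Granted this inclusion, the equivalence $q^\ast\colon\D^{R^n}\simeq\D^{[2]^n,\mathrm{iso}}$ restricts, by the very definition of $\D^{R^n,\exx}$ as the preimage under $q^\ast$ of $\D^{[2]^n,\exx}$, to an equivalence $\D^{R^n,\exx}\simeq\D^{[2]^n,\exx}$. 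Composing this with the equivalence of the preceding paragraph yields the sought pseudo-natural equivalence $\D^{n\cdot\bbone}\simeq\D^{R^n,\exx}$.

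Pseudo-naturality with respect to exact morphisms propagates automatically: the chain \eqref{eq:biproducts-III} is built entirely from parametrized Kan extensions $u_!,u_\ast$ and restrictions $u^\ast$ along fixed functors of small categories, and each such operation is pseudo-natural in exact morphisms of stable derivators (this is already what makes the cited \autoref{prop:biproducts} and \autoref{cor:inv-hyper} pseudo-naturally equivariant). In summary, I anticipate no genuine obstacle: the corollary is a clean assembly of the two preceding results, with the sole pivot being the straightforward cross-identification $\D^{[2]^n,\exx}\subseteq\D^{[2]^n,\mathrm{iso}}$ which matches the essential image of the biproduct construction with the essential image of the localisation $q^\ast$.
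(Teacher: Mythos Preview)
Your proposal is correct and matches the argument the paper implicitly indicates by its structure: the corollary is cited from \cite{gst:tree} without a proof here, but the paper lines up \autoref{prop:biproducts} and \autoref{cor:inv-hyper} precisely so that combining them via the inclusion $\D^{[2]^n,\exx}\subseteq\D^{[2]^n,\mathrm{iso}}$ (property~(iii) of biproduct $n$-cubes) and the definition of $\D^{R^n,\exx}$ as the $q^\ast$-preimage of $\D^{[2]^n,\exx}$ gives the result. Your handling of pseudo-naturality is also the standard one.
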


With this preparation we now describe in more detail the second step in the above strategy behind the construction of general reflection morphisms (see \autoref{fig:detailed-strategy}). The above-mentioned morphism $\D^{D^-}\to\D^{D^+}$ is roughly obtained as follows.
\begin{enumerate}
\item Starting with an abstract representation $X\in\D^{D^-}$, we glue in a coherent biproduct $n$-cube centered at $\bigoplus_{i=1}^n X_{x_i}$. The corresponding morphism $\D^{D^-}\to\D^{E^-_1}$ is obtained by adapting the respective morphisms in \eqref{eq:biproducts-II}, and this step relies on the discussion of `free oriented gluing constructions' in~\S\ref{sec:glue}.
\item Next, using a variant of the functor $q\colon [2]^n\to R^n$, we invert the biproduct $n$-cubes, thereby constructing a restriction morphism $\D^{E^-_2}\to\D^{E^-_1}$. To understand this morphism, we study the compatibility of homotopical epimorphisms with `free oriented gluing constructions'; see~\S\ref{sec:glue-epi}.
\item As a next step, given a representation $X\in\D^{E^-_2}$, we extend it by passing from $X_v\to\bigoplus_{i=1}^n X_{x_i}$ to the corresponding cofiber square. To get our hands on the resulting morphism of derivators $\D^{E^-_2}\to\D^F$ we again apply results from~\S\ref{sec:glue}.
\item The steps so far yield a morphism of derivators $\D^{D^-}\to\D^F$. One observes that the category $F$ also comes with a functor $D^+\to F$. Dualizing the steps so far, we show that there is a similar morphism of derivators $\D^{D^+}\to\D^F$, and that the span $\D^{D^-}\to\D^F\ot\D^{D^+}$ restricts to the desired equivalence.
\end{enumerate}

These steps will be carried out in detail in \S\ref{sec:reflection-sep}, and combined with the above inflation and deflation steps, they are shown in \S\ref{sec:reflection} to yield the intended general reflection morphisms $\D^{C^-}\to\D^{C^+}$ and $\D^{C^+}\to\D^{C^-}$, showing that the categories $C^-$ and $C^+$ are strongly stably equivalent; see~\autoref{thm:reflect}. In the following two sections we first develop some of the necessary techniques.

\section{Free oriented gluing constructions}
\label{sec:glue}

In this section we study in more detail the gluing construction alluded to in~\S\ref{sec:guide}. In particular, we will see that these gluing constructions behave well with Kan extension morphisms. The results of this section and \S\ref{sec:glue-epi} are central to the construction of the reflection morphisms in \S\ref{sec:reflection-sep}.

To begin with let us consider the following construction. 

\begin{con}\label{con:glue}
Let $A_1,A_2\in\cCat$ be small categories, let $n\in\lN$, let $s_1,\ldots,s_n\in A_1$, and $t_1,\ldots, t_n\in A_2$. Moreover, let $[1]$ again be the poset $(0<1)$ considered as a category. The category $[1]$ comes with a functor $(0,1)\colon\bbone\sqcup\bbone\to [1]$ classifying the objects $0$ and $1$. Using this notation, we define the category $A$ to be the following pushout
\begin{equation}\label{eq:glue}
\vcenter{
\xymatrix{
\coprod_{i=1,\ldots,n}\bbone\sqcup\bbone\ar[r]^-{s\sqcup t}\ar[d]&A_1\sqcup A_2\ar[d]^-{(i_1,i_2)}\\
\coprod_{i=1,\ldots,n}[1]\ar[r]_-{\beta}&A,\pushoutcorner
}
}
\end{equation}
and call it the \textbf{free oriented gluing construction} associated to $(A_1,A_2,s,t)$. Given $k \in \{1, \dots, n\}$, we denote the image of the morphism $0 \to 1$ in the $k$-th copy of $[1]$ by $\beta_k\colon i_1(s_k) \to i_2(t_k)$. 
\end{con}

This construction clearly enjoys the following properties.

\begin{lem}\label{lem:glue}
In the situation of \eqref{eq:glue} the following properties are satisfied.
\begin{enumerate}
\item The functors $i_1\colon A_1\to A$ and $i_2\colon A_2\to A$ are fully faithful with disjoint images.
\item Every object in $A$ lies either in $i_1(A_1)$ or in $i_2(A_2)$.
\item There are no morphisms in $A$ from an object in $i_2(A_2)$ to an object in $i_1(A_1)$.
\item For every morphism $f\colon i_1(a_1)\to i_2(a_2)$ there is a unique $k\in\{1,\ldots,n\}$ and a unique factorization of $f$ as
\begin{equation}\label{eq:standard}
f\colon i_1(a_1)\stackrel{i_1(f')}{\longrightarrow}i_1(s_k)\stackrel{\beta_k}{\longrightarrow} i_2(t_k)\stackrel{i_2(f'')}{\longrightarrow} i_2(a_2).
\end{equation}
\end{enumerate}
\end{lem}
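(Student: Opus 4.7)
The plan is to construct the pushout $A$ explicitly by describing its objects and morphisms and then read off all four properties directly from this description. The object part is essentially free; the morphism part requires writing down a candidate category and verifying the universal property by hand.

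For objects, the forgetful functor $\mathrm{Ob}\colon\cCat\to\mathrm{Set}$ is a left and right adjoint, hence preserves pushouts. The left vertical $\coprod_k(\bbone\sqcup\bbone)\to\coprod_k[1]$ in \eqref{eq:glue} is a bijection on objects, so the object set of $A$ is $\mathrm{Ob}(A_1)\sqcup\mathrm{Ob}(A_2)$, with $i_1$ and $i_2$ injecting as disjoint summands. This immediately gives (ii) and the disjointness clause of (i).

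For morphisms, I would introduce a candidate $\tilde A$ with this object set and hom-sets
\begin{align*}
\tilde A(i_1(a),i_1(a')) &= A_1(a,a'), \\
\tilde A(i_2(a),i_2(a')) &= A_2(a,a'), \\
\tilde A(i_2(a),i_1(a')) &= \emptyset, \\
\tilde A(i_1(a),i_2(a')) &= \coprod_{k=1}^{n} A_2(t_k,a')\times A_1(a,s_k).
\end{align*}
Composition internal to $A_1$ or to $A_2$ is inherited; a triple $(f'',k,f')$ pre-composed with $h\in A_1$ gives $(f'',k,f'\circ h)$, post-composed with $g\in A_2$ gives $(g\circ f'',k,f')$; other compositions are vacuously defined. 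Associativity and unitality reduce to the corresponding axioms in $A_1$ and $A_2$. To check the universal property, given $F_1\colon A_1\to B$, $F_2\colon A_2\to B$ and morphisms $\gamma_k\colon F_1(s_k)\to F_2(t_k)$, define $\tilde F\colon\tilde A\to B$ by $F_1,F_2$ on same-side morphisms and by $(f'',k,f')\mapsto F_2(f'')\circ\gamma_k\circ F_1(f')$ on cross morphisms. Functoriality follows from the composition rules; uniqueness is forced because any extension must agree with this formula on the generating morphisms $\beta_k=(\id_{t_k},k,\id_{s_k})$ and hence on all composites.

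Identifying $A$ with $\tilde A$, the remaining properties fall out at once. The hom-set descriptions show $i_1$ and $i_2$ are fully faithful, completing (i). Property (iii) is the emptiness clause, and (iv) is the statement that $\tilde A(i_1(a),i_2(a'))$ is a disjoint union over $k\in\{1,\ldots,n\}$ of products, so each cross morphism determines a unique tag $k$ together with a unique pair $(f'',f')$; by construction of composition in $\tilde A$, this pair is precisely the factorization \eqref{eq:standard}. The only genuine obstacle is the bookkeeping in the universal property, but since pushing further along $A_1$- and $A_2$-factors never creates new cross morphisms, no non-trivial relations among distinct triples can arise, and the verification is routine.
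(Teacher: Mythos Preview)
Your argument is correct. Building the pushout $\tilde A$ by hand, with the cross hom-sets given as $\coprod_k A_2(t_k,a')\times A_1(a,s_k)$, and then verifying the universal property directly is a clean and complete approach; the four claims then literally coincide with the definition.

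The paper proceeds slightly differently in spirit: rather than writing down $\tilde A$ and checking universality, it appeals to the general description of pushouts of small categories via equivalence classes of composable words and to the normal-form result (\autoref{lem:normal-forms}) in the appendix, which says that over a discrete $W$ every morphism in the pushout has a unique reduced expression. Specialized to the span in \eqref{eq:glue}, the reduced words of a morphism $i_1(a_1)\to i_2(a_2)$ are forced to have length exactly three and shape $(f'',\beta_k,f')$, which is precisely your factorization. Your route is more self-contained and avoids the appendix entirely; the paper's route is shorter on the page but relies on the general machinery of amalgamations that is being developed there for other purposes anyway.
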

\begin{proof}
This is immediate from the construction of the pushout category in~\eqref{eq:glue} (see also \autoref{lem:normal-forms}).
\end{proof}

\begin{defn}\label{defn:glue-type}
We refer to the factorizations in \autoref{lem:glue}(iv) as \textbf{standard factorizations} and call the unique number $k\in\{1,\ldots,n\}$ the \textbf{type} of~$f$.
\end{defn}

\begin{eg}\label{eg:separated-glue}
Let $C\in\cCat$ and let $y_1,\ldots,y_n\in  C$ be a list of objects (possibly with repetition), let $t=y\colon n\cdot\bbone\to C$ be the corresponding functor. Moreover, note that $[1]^n_{\leq 1}$ is the source of valence $n$ which comes with the functor $s\colon n\cdot\bbone\to[1]^n_{\leq 1}$ classifying the objects different from the source. The pushout square
\[
\xymatrix{
\coprod_{i=1,\ldots,n}\bbone\sqcup\bbone\ar[r]^-{s\sqcup t}\ar[d]&[1]^n_{\leq 1}\sqcup C\ar[d]\\
\coprod_{i=1,\ldots,n}[1]\ar[r]&D^-,\pushoutcorner
}
\]
exhibits the category $D^-$ showing up in the outline of the strategy of the construction of general reflection morphisms (see \autoref{fig:rough-strategy}) as an instance of a free oriented gluing construction. There is a similar description of the category $D^+$ in \autoref{fig:rough-strategy}.
\end{eg}

\begin{eg}\label{eg:glue-one-point}
As a special case of \autoref{con:glue} we recover the one-point extensions of \cite[\S8]{gst:tree}. In fact, this is the case for the free oriented gluing construction associated to $(A_1,A_2,s,t)$ in the case where $n=1$ and $A_1$ or $A_2$ is the terminal category $\bbone$.
\end{eg}

\begin{con}\label{con:glue-comp}
We now consider two free oriented gluing constructions $A$ and $A'$ which are associated to $(A_1,A_2,s,t)$ and $(A'_1,A'_2,s',t')$, respectively. Let us assume that the second summands $A_2=A_2'$ as well as the targets $t=t'$ agree while there is a functor $u_1\colon A_1\to A_1'$ such that $s'= u_1\circ s$. This situation may be summarized by the following commutative diagram
\begin{equation}\label{eq:comp-glue}
\vcenter{
\xymatrix@-1.5pc{
\coprod_{i=1,\ldots,n}\bbone\sqcup\bbone \ar[rr]^-{s\sqcup t} \ar@/_0.8pc/[drrr]_-{s'\sqcup t}\ar[dd] &&
A_1\sqcup A_2 \ar[dr]^-{u_1\sqcup\id} \ar'[d][dd] \\
&&&
A_1'\sqcup A_2 \ar[dd]^-{(i'_1,i'_2)}\\
\coprod_{i=1,\ldots,n}[1] \ar@/_0.8pc/[drrr]_-{\beta'} \ar[rr]_-\beta &&
A \ar[dr]^-u\pushoutcorner\\
&&& A'.
}
}
\end{equation}
Here, both the front and the back face are the pushout squares defining the respective gluing constructions and $u\colon A\to A'$ is induced by the universal property of the back pushout square. We refer to the situation described in \eqref{eq:comp-glue} as \textbf{two compatible (free oriented) gluing constructions} (see \autoref{fig:compatible-gluings} for an illustration).
\end{con}

\begin{figure}
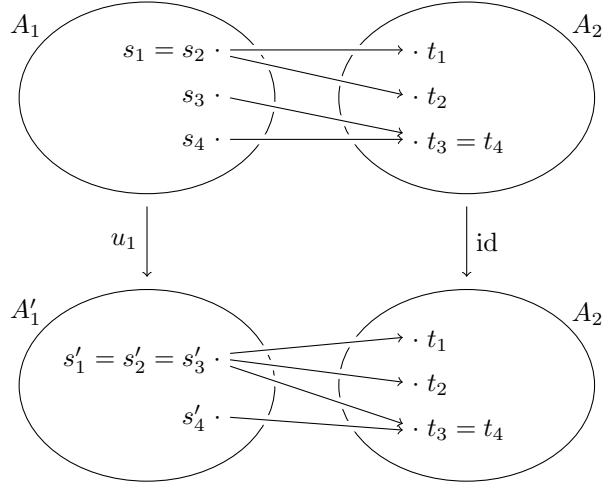

\centering
\TikzFigCompatibleGlueings
\caption{Two compatible (free oriented) gluing constructions.}
\label{fig:compatible-gluings}
\end{figure}

Combining the face on the right in \eqref{eq:comp-glue} with the inclusions of the respective first summands we obtain a commutative square of small categories, which we consider in two ways as a square populated by the identity transformation,
\begin{equation}\label{eq:glue-exact}
\vcenter{
\xymatrix{
A_1\ar[d]_-{u_1}\ar[r]^-{i_1}&A\ar[d]^-u&&A_1\ar[d]_-{u_1}\ar[r]^-{i_1}\drtwocell\omit{\id}&A\ar[d]^-u\\
A_1'\ar[r]_-{i_1'}&A',\ultwocell\omit{\id}&&A_1'\ar[r]_-{i_1'}&A'.
}
}
\end{equation}
The following proposition guarantees that Kan extensions along $u$ and Kan extensions along $u_1$ interact as expected.

\begin{prop}\label{prop:comp-glue}
If \eqref{eq:comp-glue} are two compatible gluing constructions, then both squares in~\eqref{eq:glue-exact} are homotopy exact, i.e., in every derivator the canonical mates
\[
(i_1')^\ast u_\ast\to (u_1)_\ast (i_1)^\ast\qquad\text{and}\qquad (u_1)_!(i_1)^\ast\to (i_1')^\ast u_!
\]
are isomorphisms.
\end{prop}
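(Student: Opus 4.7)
The plan is to verify the two homotopy exactness claims pointwise via (Der4), reducing each to a statement about slice categories.

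First, I would record that $i_1$ and $i_1'$ are sieves: by \autoref{lem:glue}(iii) applied to $A$ and to $A'$, any morphism with codomain in $i_1(A_1)$ (respectively $i_1'(A_1')$) already has its source there.

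By (Der4), the pointwise isomorphism of the mate $(u_1)_!(i_1)^\ast \to (i_1')^\ast u_!$ at each $a_1' \in A_1'$ reduces to homotopy finality of the comparison functor
\[
\iota' \colon (u_1/a_1') \longrightarrow (u/i_1'(a_1')),\qquad (x_1, g) \mapsto (i_1(x_1),\, i_1'(g)),
\]
while the mate $(i_1')^\ast u_\ast \to (u_1)_\ast (i_1)^\ast$ reduces symmetrically to homotopy initiality of $\iota \colon (a_1'/u_1) \to (i_1'(a_1')/u)$ defined by the same formula. For $\iota'$, the sieve property of $i_1'$ forces any $(x, h) \in (u/i_1'(a_1'))$ to satisfy $u(x) \in i_1'(A_1')$, and disjointness of $i_1'(A_1')$ and $i_2'(A_2)$ in $A'$ (\autoref{lem:glue}(i)) then forces $x \in i_1(A_1)$ via the dichotomy of \autoref{lem:glue}(ii); so $\iota'$ is essentially surjective, and together with full faithfulness of $i_1, i_1'$ it is in fact an equivalence of categories. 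Equivalences are in particular homotopy final.

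The mate involving $\iota$ is the substantive step, and I expect it to be the main obstacle, because $\iota$ is only fully faithful: the target $(i_1'(a_1')/u)$ still contains extra objects $(x, h)$ with $x = i_2(x_2)$. I would show each slice $\iota/(x, h)$ has a terminal object. When $x \in i_1(A_1)$ the argument is as for $\iota'$, and $\iota/(x,h)$ reduces to an over-slice in $(a_1'/u_1)$ with an obvious terminal object. When $x = i_2(x_2)$, I would write $h = i_2'(f'') \circ \beta'_k \circ i_1'(f')$ in standard form using \autoref{lem:glue}(iv) for $A'$, and express an arbitrary $\psi \colon i_1(x_1) \to i_2(x_2)$ underlying an object of $\iota/(x, h)$ as $\psi = i_2(\psi'') \circ \beta_j \circ i_1(\psi')$ using the same lemma for $A$. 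Comparing the resulting standard factorization of $u(\psi) \circ i_1'(g)$ with that of $h$ and invoking uniqueness in $A'$ forces $j = k$, $\psi'' = f''$, and $u_1(\psi') \circ g = f'$. This identifies $\iota/(x, h)$ with the ordinary over-slice $(a_1'/u_1) \downarrow (s_k, f')$, which has the terminal object $((s_k, f'), \mathrm{id}_{s_k})$ and hence is contractible.

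Once both slice computations are established, the canonical mates are pointwise isomorphisms by (Der4), proving both squares of \eqref{eq:glue-exact} are homotopy exact.
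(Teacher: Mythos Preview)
Your argument is correct, but it follows a genuinely different route from the paper's. The paper checks the mates pointwise over $A$ rather than over $A_1'$: using the joint surjectivity of $i_1,i_2$ and (Der2), it restricts each mate along $i_1^\ast$ and $i_2^\ast$ and then verifies invertibility by pasting together known homotopy exact squares (fully faithful squares, constant squares, slice squares, and homotopy finality of right adjoints). In particular, for the left square the paper proves the conjugate mate $(i_1)_! u_1^\ast\to u^\ast(i_1')_!$ is invertible, and the key combinatorial step at an object $i_2(a_2)$ is the construction of a right adjoint $\coprod_k A_2(t_k,a_2)\to(i_1/i_2a_2)$; for the second square the paper exploits that $i_1,i_1'$ are sieves to dispose of the $i_2$-part immediately via extension by terminal objects.

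Your approach instead invokes the standard pointwise criterion for homotopy exactness directly over $A_1'$, reducing everything to properties of the comparison functors $\iota,\iota'$ between slice categories, and then analyzes those slices combinatorially using the standard factorizations of \autoref{lem:glue}(iv). The identification of $\iota/(i_2(x_2),h)$ with the over-slice $(a_1'/u_1)\big/(s_k,f')$ is the analogue of the paper's right-adjoint construction, and is arguably more transparent because it makes the role of the unique type $k$ explicit. The paper's pasting method, on the other hand, stays closer to the derivator toolkit developed in \S\ref{sec:review-htpy} and avoids any explicit slice bookkeeping in the easy cases. Both arguments are of comparable length; yours is more self-contained, the paper's more modular.
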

\begin{proof}
We first show that the square on the left in \eqref{eq:glue-exact} is homotopy exact, and show that the canonical mate $(i_1)_! u_1^\ast\to u^\ast (i_1')_!$ is an isomorphism. Since the functors $i_1\colon A_1\to A$ and $i_2\colon A_2\to A$ are jointly surjective, it suffices by (Der2) to show that the restrictions of the canonical mate with $i_1^\ast,i_2^\ast$ are isomorphisms. For the first case we consider the pastings
\[
\xymatrix{
A_1\ar[r]^-\id\ar[d]_-\id\drtwocell\omit{\id}&A_1\ar[r]^-{u_1}\ar[d]_-{i_1}\drtwocell\omit{\id}&A_1'\ar[d]^-{i_1'}
\ar@{}[rrd]|{=}&&
A_1\ar[r]^-{u_1}\ar[d]_-\id\drtwocell\omit{\id}&A_1'\ar[r]^-\id\ar[d]_-\id\drtwocell\omit{\id}&A_1'\ar[d]^-{i_1'}
\\
A_1\ar[r]_-{i_1}&A\ar[r]_-u&A'&&
A_1\ar[r]_-{u_1}&A_1'\ar[r]_-{i_1'}&A'.
}
\]
The fully faithfulness of $i_1,i_1'$ imply that the square to the very left and the square to the very right are homotopy exact (\autoref{egs:htpy-exact}). Moreover, the second square from the right is constant and hence homotopy exact. The functoriality of mates with respect to pasting implies that the restricted canonical mate $i_1^\ast(i_1)_! u_1^\ast\to i_1^\ast u^\ast (i_1')_!$ is an isomorphism.

Now, given an object $i_2(a_2)\in A$ we consider the pasting  
\[
\xymatrix{
\coprod_{k}A_2(t_k,a_2)\ar[r]^-r\ar[d]\drtwocell\omit{\id}&(i_1/i_2a_2)\ar[r]^-p\ar[d]\drtwocell\omit{}&A_1\ar[r]^-{u_1}\ar[d]_-{i_1}\drtwocell\omit{\id}&A_1'\ar[d]^-{i_1'}\\
\bbone\ar[r]_-\id&\bbone\ar[r]_-{i_2a_2}&A\ar[r]_-u&A'
}
\]
in which the square in the middle is a slice square. The functor $r$ sends a morphism $t_k\to a_2$ to the pair $(s_k, i_1s_k\to i_2t_k\to i_2a_2)\in(i_1/i_2a_2)$. Using \autoref{lem:glue} the reader easily checks that this functor is a right adjoint so that the above square on the left is homotopy exact by the homotopy finality of right adjoints (\autoref{egs:htpy-exact}). Note that the above pasting agrees with the pasting
\[
\xymatrix{
\coprod_{k}A_2(t_k,a_2)\ar[r]^-{r'}\ar[d]\drtwocell\omit{\id}&(i_1'/i_2'a_2)\ar[r]^-p\ar[d]\drtwocell\omit{}&A_1'\ar[d]^-{i_1'}\\
\bbone\ar[r]_-\id&\bbone\ar[r]_-{i_2'a_2}&A'
}
\]
given by a slice square and a similarly defined right adjoint functor $r'$. The functoriality of mates with pasting hence implies that $(i_1)_! u_1^\ast\to u^\ast (i_1')^\ast$ is an isomorphism at $i_2a_2$.

We now turn to the second claim and show that the canonical mate $u^\ast (i_1')_\ast\to (i_1)_\ast u_1^\ast$ is an isomorphism. Using again that $i_1,i_2$ are jointly surjective, it suffices to show that the corresponding restrictions of the canonical mate are invertible. Since $i_1,i_1'$ are sieves, both right Kan extensions are right extensions by terminal objects (\autoref{rmk:ext-by-zero-unpointed}), and the above canonical mate is hence automatically an isomorphism on objects of the form $i_2a_2$. It remains to show that its restriction along $i_1^\ast$ is an isomorphism and for that purpose we consider the diagram
\[
\xymatrix{
A_1\ar[r]^-\id\ar[d]_-\id\drtwocell\omit{\id}&A_1\ar[d]^-{i_1}&&
A_1\ar[r]^-\id\ar[d]_-{u_1}\drtwocell\omit{\id}&A_1\ar[d]^-{u_1}
\\
A_1\ar[r]_-{i_1}\ar[d]_-{u_1}\drtwocell\omit{\id}&A\ar[d]^-u\ar@{}[rr]|{=}&&
A_1'\ar[r]_-\id\ar[d]_-\id\drtwocell\omit{\id}&A_1'\ar[d]^-{i_1'}\\
A_1'\ar[r]_-{i_1'}&A'&&
A_1'\ar[r]_-{i_1'}&A'.
}
\]
Using the same arguments as in the first part of the proof we conclude that $i_1^\ast u^\ast (i_1')_\ast\to i_1^\ast(i_1)_\ast u_1^\ast$ is an isomorphism, concluding the proof.
\end{proof}

In the case that $u_1$ and, hence, $u$ is fully faithful there is the following convenient result.

\begin{cor} \label{cor:glue-img-Kan}
Let \eqref{eq:comp-glue} be two compatible gluing constructions such that $u_1$ and, hence, $u$ are fully faithful, and let \D be a derivator.
\begin{enumerate}
\item The right Kan extension morphism $u_\ast\colon\D^A\to\D^{A'}$ is fully faithful with essential image given by those $X$ such that $(i_1')^\ast X$ lies in the essential image of~$(u_1)_\ast\colon\D^{A_1}\to\D^{A_1'}$.
\item The left Kan extension morphism $u_!\colon\D^A\to\D^{A'}$ is fully faithful with essential image given by those $X$ such that $(i_1')^\ast X$ lies in the essential image of~$(u_1)_!\colon\D^{A_1}\to\D^{A_1'}$.
\end{enumerate}
\end{cor}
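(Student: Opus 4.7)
The full faithfulness of both $u_\ast$ and $u_!$ will follow immediately: since $u_1$ is fully faithful, the pushout construction defining $u$ makes $u$ fully faithful as well, and then \autoref{egs:htpy-exact}(i) yields the conclusion. For the essential image statements, my plan is to apply \autoref{lem:ff-essim} (and its evident dualization to right Kan extensions along fully faithful functors) and reduce the pointwise test on $A'$ to a pointwise test on $A_1'$ by invoking \autoref{prop:comp-glue}.

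As a first step I would identify $A' - u(A)$ explicitly. From the compatibility relations $u\circ i_1 = i_1'\circ u_1$ and $u\circ i_2 = i_2'$ visible in \eqref{eq:comp-glue}, together with \autoref{lem:glue}(i)--(ii) applied to both gluing constructions, one reads off
\[
A' - u(A) \;=\; i_1'\bigl(A_1' - u_1(A_1)\bigr).
\]

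For part (i), let $X\in\D^{A'}$. By the dual of \autoref{lem:ff-essim}, $X$ lies in the essential image of $u_\ast$ if and only if the unit $\eta_X\colon X\to u_\ast u^\ast X$ is an isomorphism pointwise on $A' - u(A)$, equivalently on $i_1'\bigl(A_1'-u_1(A_1)\bigr)$. Now \autoref{prop:comp-glue} provides a canonical isomorphism $(i_1')^\ast u_\ast \iso (u_1)_\ast (i_1)^\ast$, and combined with $u\circ i_1 = i_1'\circ u_1$ this furnishes a natural identification
\[
(i_1')^\ast u_\ast u^\ast X \;\iso\; (u_1)_\ast u_1^\ast (i_1')^\ast X.
\]
A standard mate computation will then identify the restricted unit $(i_1')^\ast \eta_X$ with the unit of the adjunction $\bigl(u_1^\ast,(u_1)_\ast\bigr)$ evaluated at $(i_1')^\ast X$. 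Applying the dualized \autoref{lem:ff-essim} now to $u_1$, the pointwise condition on $A_1'-u_1(A_1)$ is precisely the condition that $(i_1')^\ast X$ lie in the essential image of $(u_1)_\ast$, which completes (i). The argument for (ii) will proceed in exactly the same manner, using the original form of \autoref{lem:ff-essim} for $u_!$ and $(u_1)_!$, the second half of \autoref{prop:comp-glue}, and the dual mate identification of the restricted counit $(i_1')^\ast \epsilon_X$ with the counit of $\bigl((u_1)_!,u_1^\ast\bigr)$ at $(i_1')^\ast X$.

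The only delicate step I anticipate is verifying that, under the canonical mate isomorphism of \autoref{prop:comp-glue}, the restricted unit $(i_1')^\ast \eta_X$ genuinely corresponds to the unit of $\bigl(u_1^\ast,(u_1)_\ast\bigr)$ at $(i_1')^\ast X$ (and dually for the counit in (ii)). This is a routine two-categorical pasting diagram chase, amounting to checking that the mate construction used in the proof of \autoref{prop:comp-glue} is compatible with units of adjunctions; no new idea is needed, but the bookkeeping must be carried out carefully.
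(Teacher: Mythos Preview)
Your proposal is correct and follows essentially the same approach as the paper: reduce membership in the essential image to invertibility of the (co)unit on $A'-u(A)=i_1'(A_1'-u_1(A_1))$ via \autoref{lem:ff-essim}, and then use \autoref{prop:comp-glue} together with a pasting/mate argument to identify $(i_1')^\ast\eta$ with the unit of $(u_1^\ast,(u_1)_\ast)$ at $(i_1')^\ast X$. The paper carries out explicitly the pasting computation you call the ``delicate step'' by exhibiting two equal pastings (one involving the homotopy exact square of \autoref{prop:comp-glue}, the other a constant square), but the underlying strategy is identical.
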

\begin{proof}
We give a proof of (i), the case of (ii) is dual. Since both $u_1$ and $u$ are fully faithful, the respective right Kan extension morphisms are fully faithful (\autoref{egs:htpy-exact}). Thus, the corresponding essential images consist precisely of those diagrams on which the respective units $\eta_1\colon\id\to (u_1)_\ast u_1^\ast$ and $\eta\colon\id\to u_\ast u^\ast$ are  isomorphisms. To express this differently we consider the following pastings
\[
\xymatrix{
A_1\ar[r]^-{i_1}\ar[d]_-{u_1}&A\ar[r]^-u\ar[d]^-u&A'\ar[d]^-=\ar@{}[rrd]|{=}&&
A_1\ar[r]^-{u_1}\ar[d]_-{u_1}&A_1'\ar[r]^-{i_1'}\ar[d]^-=&A'\ar[d]^-=\\
A_1'\ar[r]_-{i_1'}&A'\ar[r]_-=\ultwocell\omit{\id}&A'\ultwocell\omit{\id}&&
A_1'\ar[r]_-=&A_1'\ar[r]_-{i_1'}\ultwocell\omit{\id}&A'.\ultwocell\omit{\id}
}
\]
By \autoref{lem:ff-essim}, $X\in\D^{A'}$ lies in the essential image of~$u_\ast$ if and only if $(i_1')^\ast\eta$ is an isomorphism on~$X$. Using the compatibility of mates with pasting and the homotopy exactness of the square to the very left (\autoref{prop:comp-glue}), this is the case if and only if the canonical mate associated to the pasting on the left is an isomorphism on~$X$. But since the above two pastings agree, this is the case if and only if the canonical mate of the pasting on the right is an isomorphism on~$X$. As the square on the right is constant and hence homotopy exact, this is to say that $\eta_1$ is an isomorphism on~$(i_1')^\ast X$, i.e., that $(i_1')^\ast X$ is in the essential image of $(u_1)_\ast$ (by an additional application of \autoref{lem:ff-essim}).
\end{proof}

As we shall see in~\S\ref{sec:reflection-sep}, the results of this section allow us to add the desired biproduct $n$-cubes and (co)fiber squares needed for the reflection morphisms.
To also be able to pass to the invertible $n$-cube we include the following section.

\section{Gluing constructions and homotopical epimorphisms}
\label{sec:glue-epi}

In this section we continue the study of free oriented gluing constructions as defined in~\S\ref{sec:glue} and show that they are compatible with homotopical epimorphisms (\autoref{defn:htpy-epi}). The goal is to establish \autoref{thm:glue-epi} showing that given a pair of compatible gluing construction~\eqref{eq:comp-glue} such that $u_1$ is a homotopical epimorphism then so is $u$. Moreover, the essential images of the corresponding restriction morphisms $u_1^\ast$ and $u^\ast$ are related as desired. 

In the situation of two compatible gluing constructions~\eqref{eq:comp-glue}, the respective inclusions of the second summands induce the following commutative square, which we consider as being populated by the identity transformation as indicated in

\begin{equation}\label{eq:glue-exact-II}
\vcenter{
\xymatrix{
A_2\ar[d]_-=\ar[r]^-{i_2}&A\ar[d]^-u\\
A_2\ar[r]_-{i_2'}&A'.\ultwocell\omit{\id}
}
}
\end{equation}

\begin{prop}\label{prop:comp-glue-II}
Given two compatible oriented gluing constructions as in \eqref{eq:comp-glue} the commutative square \eqref{eq:glue-exact-II} is homotopy exact.
\end{prop}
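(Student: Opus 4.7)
The plan is to verify homotopy exactness pointwise via (Der4) and a direct slice-category analysis. Let \D be a derivator; the canonical mate associated to the square, using $\id_! = \id$, takes the form $i_2^\ast X \to (i_2')^\ast u_! X$. By (Der2) it suffices to check this is an isomorphism at each $a_2 \in A_2$, and by (Der4) the right-hand side at $a_2$ is computed as $\colim_{(u/i_2'(a_2))} p^\ast X$, where $p \colon (u/i_2'(a_2)) \to A$ denotes the projection.

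The key step is to show that $(i_2(a_2), \mathrm{id}_{i_2'(a_2)})$ is a terminal object of the slice $(u/i_2'(a_2))$. Given an arbitrary $(a, f) \in (u/i_2'(a_2))$, the sieve property of $i_2'$, read off from \autoref{lem:glue}(iii) applied to $A'$, forces $u(a) \in i_2'(A_2)$. Since $u$ sends $i_1(A_1)$ into $i_1'(A_1')$, which by \autoref{lem:glue}(i) is disjoint from $i_2'(A_2)$, this in turn forces $a = i_2(a_2')$ for some $a_2' \in A_2$. The fully faithfulness of $i_2'$ then identifies $f$ with a unique morphism $\bar f \colon a_2' \to a_2$ in $A_2$, and the fully faithfulness of $i_2$ lifts $\bar f$ uniquely to $g := i_2(\bar f) \colon i_2(a_2') \to i_2(a_2)$ in $A$ with $u(g) = f$. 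This yields the required unique morphism $(a,f) \to (i_2(a_2), \mathrm{id})$ in the slice.

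Once terminality is established, the functor $\bbone \to (u/i_2'(a_2))$ picking out $(i_2(a_2), \mathrm{id})$ is a right adjoint, hence homotopy final by \autoref{egs:htpy-exact}(iii), and the colimit appearing in the (Der4) formula collapses to $X_{i_2(a_2)}$. A short unraveling of the mate identifies the resulting comparison morphism with the identity on $X_{i_2(a_2)}$, completing the verification.

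The main subtlety is psychological: one might be tempted to deduce the statement by pasting from \autoref{prop:comp-glue}, but that square involves $u_1$ and will not eliminate in a useful way. The clean observation is that the argument for the $A_2$-side needs no hypothesis on $u_1$ whatsoever; what makes it work is purely that $u$ is the identity on the second summand and that the sieves $i_2, i_2'$ together with the disjointness of the two pushout images trap preimages of $i_2'(A_2)$ inside $i_2(A_2)$.
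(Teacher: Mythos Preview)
Your argument has a genuine gap: the claim that $(i_2(a_2),\id)$ is terminal in the slice $(u/i_2'(a_2))$ is false in general, and the appeal to \autoref{lem:glue}(iii) does not say what you need. That lemma asserts there are no morphisms \emph{from} $i_2'(A_2)$ \emph{to} $i_1'(A_1')$; in other words $i_2'$ is a \emph{cosieve}, not a sieve. In the slice $(u/i_2'(a_2))$ the structure map $f\colon u(a)\to i_2'(a_2)$ points \emph{into} $i_2'(A_2)$, so nothing prevents $u(a)\in i_1'(A_1')$, and indeed such objects occur whenever there is a standard factorization through some $\beta_k'$.

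Concretely, take $A_1=\bbone\sqcup\bbone$ with objects $s,s'$, $A_2=\bbone$ with object $t$, $n=1$ with $s_1=s$, and let $u_1\colon A_1\to A_1'=\bbone$ collapse both objects. Then $A\cong[1]\sqcup\bbone$ (with $s'$ disconnected), $A'\cong[1]$, and $u$ sends both $s,s'$ to the source. The slice $(u/t)$ contains the object $(s',\beta')$, but there is no morphism $s'\to t$ in $A$ at all, so there is no map $(s',\beta')\to(t,\id)$ and $(i_2(t),\id)$ fails to be terminal.

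The fix is exactly the dual of what you wrote, and this is what the paper does: work with the \emph{coslice} $(i_2'a_2/u)$ and show that $(i_2a_2,\id)$ is \emph{initial} there. Now the cosieve property of $i_2'$ bites in the correct direction: a structure map $i_2'(a_2)\to u(a)$ forces $u(a)\in i_2'(A_2)$, hence $a\in i_2(A_2)$, and full faithfulness of $i_2,i_2'$ finishes the argument just as you sketched. Equivalently, compute the other canonical mate (the one involving $u_\ast$ rather than $u_!$); your instinct that the argument needs no hypothesis on $u_1$ is correct, but only once the orientation is right.
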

\begin{proof}
To reformulate the claimed homotopy exactness of the square~\eqref{eq:glue-exact-II}, we consider the pasting on the left in
\[
\xymatrix{
\bbone\ar[r]^-{a_2}\ar[d]_=&A_2\ar[d]^-=\ar[r]^-{i_2}&A\ar[d]^-u\ar@{}[rd]|{=}&
\bbone\ar[r]^-{i_2a_2}\ar[d]_-=&A\ar[d]^-u\ar@{}[rd]|{=}&
\bbone\ar[r]^-{(i_2a_2,\id)}\ar[d]_=&(i_2'a_2/u)\ar[d]^-\pi\ar[r]^-q&A\ar[d]^-u\\
\bbone\ar[r]_-{a_2}&A_2\ar[r]_-{i_2'}\ultwocell\omit{\id}&A'\ultwocell\omit{\id}&
\bbone\ar[r]_-{i_2'a_2}&A'\ultwocell\omit{\id}&
\bbone\ar[r]_-=&\bbone\ultwocell\omit{\id}\ar[r]_-{i_2'a_2}&A',\ultwocell\omit{\id}
}
\]
in which the left square is constant and hence homotopy exact. Using (Der2) and the compatibility of mates with pasting we conclude that \eqref{eq:glue-exact-II} is homotopy exact if and only if the above pasting is homotopy exact for every $a_2\in A_2$. Note that this pasting is simply the above commutative square in the middle which in turn can be written as the above pasting on the right. In that pasting, the square on the right is a slice square and hence homotopy exact. The square on the left is given by the functor classifying the initial object $(i_2a_2,\id\colon i_2'a_2\to ui_2a_2)$ in the slice category $(i_2'a_2/u)$, and that square is hence homotopy exact by the homotopy initiality of left adjoint functors (\autoref{egs:htpy-exact}). The compatibility of homotopy exact squares with pasting concludes the proof.
\end{proof}

We again consider two compatible gluing constructions as in \eqref{eq:comp-glue}. In that notation, by \autoref{prop:comp-glue} there is a homotopy exact square
\begin{equation}\label{eq:glue-epi}
\vcenter{
\xymatrix{
A_1\ar[d]_-{u_1}\ar[r]^-{i_1}&A\ar[d]^-u\\
A_1'\ar[r]_-{i_1'}&A'
}
}
\end{equation}
of small categories.

\begin{prop}\label{prop:glue-epi}
Given two compatible gluing constructions as in \eqref{eq:comp-glue} such that $u_1\colon A_1\to A_1'$ is a homotopical epimorphism, then also $u\colon A\to A'$ is a homotopical epimorphism.
\end{prop}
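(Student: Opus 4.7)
The plan is to verify that for an arbitrary derivator~$\D$ the restriction functor $u^\ast\colon \D(A')\to \D(A)$ is fully faithful, equivalently that the counit $\epsilon\colon u_!u^\ast\to\id$ of the adjunction $(u_!,u^\ast)$ is a natural isomorphism on~$\D(A')$. Equivalently still, I must show that the square with $u$ along the top and left edges, identities along the bottom and right, and identity $2$-cell, is homotopy exact (as recalled at the end of \S\ref{sec:review-htpy}). By (Der2) this can be checked pointwise at every $a'\in A'$, and by \autoref{lem:glue}(ii) every such object lies in the image of either $i_1'$ or $i_2'$. Hence the verification splits into two cases.

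For an object $a'=i_2'(a_2)$ I would restrict $\epsilon$ along $(i_2')^\ast$. By \autoref{prop:comp-glue-II} the square featuring $i_2,\id_{A_2},i_2',u$ is homotopy exact, so the canonical mate $i_2^\ast\xto{\sim}(i_2')^\ast u_!$ is invertible. Combined with the compatibility identity $u\circ i_2=i_2'$ from \autoref{con:glue-comp}, this presents $(i_2')^\ast u_!u^\ast X$ as canonically isomorphic to $(i_2')^\ast X$ in a way compatible with $(i_2')^\ast\epsilon$, forcing the latter to be an isomorphism.

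For an object $a'=i_1'(a_1')$ I would restrict $\epsilon$ along $(i_1')^\ast$ and invoke \autoref{prop:comp-glue} to obtain the invertible mate $(u_1)_!i_1^\ast\xto{\sim}(i_1')^\ast u_!$. Using the second compatibility identity $u\circ i_1=i_1'\circ u_1$ from \autoref{con:glue-comp}, this rewrites $(i_1')^\ast u_!u^\ast$ as $(u_1)_!u_1^\ast(i_1')^\ast$ and identifies $(i_1')^\ast\epsilon$ with the counit of the adjunction $(u_1)_!\dashv u_1^\ast$ whiskered by $(i_1')^\ast$. The hypothesis that $u_1$ is a homotopical epimorphism then forces this to be an isomorphism, completing the pointwise verification.

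The main obstacle is the mate-calculus bookkeeping required to match the canonical mates of \autoref{prop:comp-glue} and \autoref{prop:comp-glue-II} with the counit $\epsilon$ under the above identifications, rather than with some unrelated natural transformation. This will be handled by applying the functoriality of mates with respect to horizontal and vertical pasting (as recorded in \autoref{egs:htpy-exact}(iv)) together with the standard triangle identities for the adjunctions $(u_!,u^\ast)$ and $((u_1)_!,u_1^\ast)$; it is a routine but careful argument of the same flavor as the ones already used in the proofs of \autoref{prop:comp-glue} and \autoref{cor:glue-img-Kan}.
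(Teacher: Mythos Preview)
Your proposal is correct and follows essentially the same strategy as the paper's proof: reduce via (Der2) and the joint surjectivity of $i_1',i_2'$ to two cases, and handle each by a pasting argument that feeds \autoref{prop:comp-glue} and \autoref{prop:comp-glue-II} into the functoriality of canonical mates. The only difference is cosmetic: you check full faithfulness of $u^\ast$ via the counit $\epsilon\colon u_!u^\ast\to\id$ of the adjunction $(u_!,u^\ast)$, whereas the paper checks it via the unit $\eta\colon\id\to u_\ast u^\ast$ of the adjunction $(u^\ast,u_\ast)$. The paper makes the pasting diagrams explicit (for $(i_1')^\ast\eta$ it writes out two equal pastings, one using \autoref{prop:comp-glue} and one expressing $u_1$ as a homotopical epimorphism; similarly for $(i_2')^\ast\eta$ using \autoref{prop:comp-glue-II} and a constant square), which amounts precisely to the ``mate-calculus bookkeeping'' you defer to the final paragraph. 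Your deferred step is indeed routine: for instance, in the $i_1'$-case the equality of pastings
\[
\vcenter{\xymatrix{
A_1\ar[r]^-{i_1}\ar[d]_-{u_1}&A\ar[r]^-u\ar[d]^-u&A'\ar[d]^-=\\
A_1'\ar[r]_-{i_1'}&A'\ar[r]_-=&A'
}}
\quad=\quad
\vcenter{\xymatrix{
A_1\ar[r]^-{u_1}\ar[d]_-{u_1}&A_1'\ar[r]^-{i_1'}\ar[d]^-=&A'\ar[d]^-=\\
A_1'\ar[r]_-=&A_1'\ar[r]_-{i_1'}&A'
}}
\]
identifies $(i_1')^\ast\epsilon_u$ (after composing with the invertible mate from \autoref{prop:comp-glue}) with $\epsilon_{u_1}(i_1')^\ast$, exactly as you claim.
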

\begin{proof}
By assumption, $u_1\colon A_1\to A'_1$ is a homotopical epimorphism, i.e., the unit $\eta_1\colon \id\to (u_1)_\ast u_1^\ast$ is an isomorphism. We have to show that so is also the unit
$\eta\colon\id\to u_\ast u^\ast$. Using that the inclusions $i_1'\colon A_1'\to A'$ and $i_2'\colon A_2'\to A'$ are jointly surjective, (Der2) implies that it is enough to show that $(i_1')^\ast\eta$ and $(i_2')^\ast\eta$ are isomorphisms. As for the first restriction, let us consider the pasting on the left in
\[
\xymatrix{
A_1\ar[r]^-{i_1}\ar[d]_-{u_1}&A\ar[r]^-u\ar[d]^-u&A'\ar[d]^-=\ar@{}[rrd]|{=}&&
A_1\ar[r]^-{u_1}\ar[d]_-{u_1}&A_1'\ar[r]^-{i_1'}\ar[d]^-=&A'\ar[d]^-=\\
A_1'\ar[r]_-{i_1'}&A'\ar[r]_-=\ultwocell\omit{\id}&A'\ultwocell\omit{\id}&&
A_1'\ar[r]_-=&A_1'\ar[r]_-{i_1'}\ultwocell\omit{\id}&A'.\ultwocell\omit{\id}
}
\]
The square to the left is homotopy exact by \autoref{prop:comp-glue}, and the compatibility of homotopy exact squares with pasting implies that $(i'_1)^\ast\eta$ is an isomorphism if and only if the pasting on the left is homotopy exact. Note that this pasting agrees with the pasting on the right in which the square to the right is constant and hence homotopy exact. Moreover, the homotopy exactness of square on the left is equivalent to $u_1$ being a homotopical epimorphism, showing that $(i_1')^\ast\eta$ is an isomorphism. 

In order to show that also the restriction $(i_2')^\ast\eta$ is an isomorphism, let us consider the pasting on the left in
\[
\xymatrix{
A_2\ar[r]^-{i_2}\ar[d]_-=&A\ar[r]^-u\ar[d]^-u&A'\ar[d]^-=\ar@{}[rrd]|{=}&&
A_2\ar[r]^-{i_2'}\ar[d]_-=&A'\ar[d]^-=\\
A_2'\ar[r]_-{i_2'}&A'\ar[r]_-=\ultwocell\omit{\id}&A'\ultwocell\omit{\id}&&
A_1'\ar[r]_-{i_2'}&A'.\ultwocell\omit{\id}
}
\]
Using similar arguments as in the previous case together with the homotopy exactness of the square to the very left (\autoref{prop:comp-glue-II}), we deduce that $(i_2')^\ast\eta$ is an isomorphism if and only if the pasting on the left is homotopy exact. Since this pasting agrees with the constant square on the very right, we conclude by the homotopy exactness of constant squares.
\end{proof}

In the situation of \autoref{prop:glue-epi} both restriction morphisms $u^\ast\colon\D^{A'}\to\D^A$ and $u_1^\ast\colon\D^{A_1'}\to\D^{A_1}$ are fully faithful for every derivator~\D. To show that the essential images are related as desired (see \autoref{thm:glue-epi}) we establish the following result. 

\begin{lem}\label{lem:glue-epi}
Let \eqref{eq:comp-glue} be two compatible gluing constructions such that $u_1\colon A_1\to A_1'$ is a homotopical epimorphism and let \D be a derivator. A diagram $X\in\D^A$ lies in the essential image of $u^\ast\colon\D^{A'}\to\D^A$ if and only if $i_1^\ast\epsilon\colon i_1^\ast u^\ast u_\ast X\to i_1^\ast X$ is an isomorphism.
\end{lem}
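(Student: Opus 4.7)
The plan is to reduce everything to checking the counit of the $(u^\ast,u_\ast)$-adjunction on the two pieces $i_1(A_1)$ and $i_2(A_2)$ of $A$, and to observe that on the second piece the counit is automatically invertible. First I would invoke \autoref{prop:glue-epi} to deduce that $u$ itself is a homotopical epimorphism, so that $u^\ast\colon\D^{A'}\to\D^A$ is fully faithful. The standard characterization of the essential image of a fully faithful left adjoint then identifies $X\in\D^A$ as lying in the essential image of $u^\ast$ precisely when the counit $\epsilon\colon u^\ast u_\ast X\to X$ is an isomorphism. Thus the statement to be proved reduces to: this counit is an isomorphism on $X$ if and only if its restriction $i_1^\ast\epsilon$ is.

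By \autoref{lem:glue}(ii) the functors $i_1$ and $i_2$ are jointly surjective on objects, so (Der2) further reduces the claim to showing that $i_2^\ast\epsilon$ is an isomorphism for every $X$. For this I would invoke \autoref{prop:comp-glue-II}, which asserts the homotopy exactness of the square
\[
\xymatrix{A_2\ar[r]^-{i_2}\ar[d]_-=&A\ar[d]^-u\\ A_2\ar[r]_-{i_2'}&A'\ultwocell\omit{\id}}
\]
populated by the identity $2$-cell. Because the left vertical arrow is the identity and the $2$-cell is the identity, the general formula for the canonical mate collapses: after the identification $i_2^\ast u^\ast=(i_2')^\ast$ coming from $u\circ i_2=i_2'$, the mate $(i_2')^\ast u_\ast\to i_2^\ast$ is literally the morphism $i_2^\ast\epsilon$. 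Homotopy exactness therefore delivers the required isomorphism, completing the proof.

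The whole argument is really just a routine combination of the two already-established propositions together with a pointwise check via (Der2). The only point that requires a moment of care, and which I would want to spell out in writing, is the identification of the canonical mate for \autoref{prop:comp-glue-II} with the restriction $i_2^\ast\epsilon$ of the counit; every other step is adjunction formalism.
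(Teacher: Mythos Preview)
Your proof is correct and follows essentially the same approach as the paper's. The only minor presentational difference is that where you directly identify the canonical mate of the square \eqref{eq:glue-exact-II} as $i_2^\ast\epsilon$ (using that the left vertical and the $2$-cell are identities), the paper instead realizes that square as the horizontal pasting of a constant square with the ``counit square'' and then invokes functoriality of mates with pasting to reach the same identification.
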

\begin{proof}
By \autoref{prop:glue-epi} the functor $u\colon A\to A'$ is also a homotopical epimorphism and $u^\ast\colon\D^{A'}\to\D^A$ is hence a fully faithful morphism of derivators. A diagram $X\in\D^A$ lies in the essential image of $u^\ast$ if and only if the counit $\epsilon\colon u^\ast u_\ast X\to X$ is an isomorphism. Using the joint surjectivity of $i_1\colon A_1\to A$ and $i_2\colon A_2\to A$, by (Der2) this is the case if and only if the restricted counits $i_1^\ast\epsilon,i_2^\ast\epsilon$ are isomorphisms on $X$. Hence, to conclude the proof it suffices to show that $i_2^\ast\epsilon$ is always an isomorphism, and to this end we consider the pasting on the left in 
\[
\xymatrix{
A_2\ar[r]^-{i_2}\ar[d]_-=&A\ar[r]^-=\ar[d]^-=&A\ar[d]^-u\ar@{}[rrd]|{=}&&
A_2\ar[r]^-{i_2}\ar[d]_-=&A\ar[d]^-u\\
A_2\ar[r]_-{i_2}&A\ar[r]_-u\ultwocell\omit{\id}&A'\ultwocell\omit{\id}&&
A_2\ar[r]_-{i_2'}&A'.\ultwocell\omit{\id}
}
\]
The homotopy exactness of constant squares and the compatibility of canonical mates with pasting implies that $i_2^\ast\epsilon$ is always an isomorphism if and only if the pasting on the left is homotopy exact. However, this pasting agrees with the square on the right, which is homotopy exact by \autoref{prop:comp-glue-II}.
\end{proof}

\begin{thm}\label{thm:glue-epi}
Given two compatible gluing constructions as in \eqref{eq:comp-glue} such that $u_1\colon A_1\to A_1'$ is a homotopical epimorphism, then also $u\colon A\to A'$ is a homotopical epimorphism. Moreover, $X\in\D^A$ lies in the essential image of $u^\ast\colon\D^{A'}\to\D^A$ if and only if $i_1^\ast X\in\D^{A_1}$ lies in the essential image of $u_1^\ast\colon\D^{A_1'}\to\D^{A_1}$.
\end{thm}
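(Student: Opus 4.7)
The first assertion — that $u\colon A\to A'$ is a homotopical epimorphism — is exactly Proposition~\ref{prop:glue-epi}, so the real content is the characterization of the essential image of $u^\ast$. Since both $u$ and $u_1$ are homotopical epimorphisms, the restriction morphisms $u^\ast\colon \D^{A'}\to\D^A$ and $u_1^\ast\colon \D^{A_1'}\to\D^{A_1}$ are fully faithful, and membership in either essential image is detected by invertibility of the appropriate counit of adjunction. I would begin by applying Lemma~\ref{lem:glue-epi} to reduce one side: $X\in\D^A$ lies in the essential image of $u^\ast$ if and only if $i_1^\ast\epsilon\colon i_1^\ast u^\ast u_\ast X\to i_1^\ast X$ is an isomorphism, where $\epsilon$ denotes the counit of $(u^\ast,u_\ast)$. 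Analogously, $i_1^\ast X\in\D^{A_1}$ lies in the essential image of $u_1^\ast$ if and only if the counit $\epsilon_1\colon u_1^\ast(u_1)_\ast i_1^\ast X\to i_1^\ast X$ is an isomorphism. It therefore suffices to identify these two counits naturally in $X$.

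To do so I would invoke Proposition~\ref{prop:comp-glue}, which supplies the natural isomorphism
\[
\phi\colon (i_1')^\ast u_\ast \xto{\sim} (u_1)_\ast i_1^\ast
\]
arising as the canonical mate of the (homotopy exact) left square in~\eqref{eq:glue-exact}. Because $u i_1 = i_1' u_1$, restricting $\phi$ along $u_1^\ast$ yields a natural isomorphism
\[
u_1^\ast\phi\colon i_1^\ast u^\ast u_\ast = u_1^\ast (i_1')^\ast u_\ast \xto{\sim} u_1^\ast (u_1)_\ast i_1^\ast.
\]
The core step is then the identity $\epsilon_1\circ u_1^\ast\phi = i_1^\ast\epsilon$ of natural transformations, for once this is known the two invertibility conditions match on the nose.

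This identity is a routine manipulation of mates. Unwinding the definition of the canonical mate of an identity $2$-cell gives $\phi_X = (u_1)_\ast\bigl(i_1^\ast\epsilon(X)\bigr)\circ \eta_1\bigl((i_1')^\ast u_\ast X\bigr)$, where $\eta_1$ is the unit of $(u_1^\ast,(u_1)_\ast)$. Applying $u_1^\ast$, postcomposing with $\epsilon_1(i_1^\ast X)$, using naturality of $\epsilon_1$ to move it past $u_1^\ast(u_1)_\ast\bigl(i_1^\ast\epsilon(X)\bigr)$, and finally invoking the triangle identity $\epsilon_1\circ u_1^\ast\eta_1 = \id$, one arrives at $\epsilon_1(i_1^\ast X)\circ u_1^\ast\phi_X = i_1^\ast\epsilon(X)$, as required. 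I do not expect any serious obstacle: all the genuine input — that $u$ is a homotopical epimorphism, the homotopy exactness of the square in Proposition~\ref{prop:comp-glue}, and the reduction of the essential image condition to the summand $A_1$ via Lemma~\ref{lem:glue-epi} — has already been secured, and what remains is a standard $2$-categorical compatibility between mates, units, and counits.
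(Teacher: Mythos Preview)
Your proposal is correct and follows essentially the same approach as the paper: both invoke Proposition~\ref{prop:glue-epi} for the first claim, reduce via Lemma~\ref{lem:glue-epi} to the invertibility of $i_1^\ast\epsilon$, and then identify this with the invertibility of $\epsilon_1$ at $i_1^\ast X$ using the homotopy exactness of the square from Proposition~\ref{prop:comp-glue}. The only difference is presentational: the paper packages the comparison of $i_1^\ast\epsilon$ with $\epsilon_1 i_1^\ast$ via two equal pastings of squares and the functoriality of mates under pasting, whereas you unwind the mate $\phi$ explicitly and verify $\epsilon_1\circ u_1^\ast\phi = i_1^\ast\epsilon$ by hand using naturality and a triangle identity---but this computation is precisely what the pasting argument encodes.
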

\begin{proof}
By \autoref{prop:glue-epi} the functor $u\colon A\to A'$ is a homotopical epimorphism and $u^\ast\colon\D^{A'}\to\D^A$, as a fully faithful morphism of derivators, induces an equivalence onto its essential image. A coherent diagram $X\in\D^A$ lies by \autoref{lem:glue-epi} in this essential image if and only if $i_1^\ast\epsilon\colon i_1^\ast u^\ast u_\ast X\to i_1^\ast X$ is an isomorphism. But, using the homotopy exactness of constant squares, this is the case if and only if the canonical mate associated to the pasting on the left in
\[
\xymatrix{
A_1\ar[r]^-{i_1}\ar[d]_-=&A\ar[r]^-=\ar[d]^-=&A\ar[d]^-u\ar@{}[rrd]|{=}&&
A_1\ar[r]^-=\ar[d]_-=&A_1\ar[r]^-{i_1}\ar[d]^-{u_1}&A\ar[d]^-u\\
A_1\ar[r]_-{i_1}&A\ar[r]_-u\ultwocell\omit{\id}&A'\ultwocell\omit{\id}&&
A_1\ar[r]_-{u_1}&A_1'\ar[r]_-{i_1'}\ultwocell\omit{\id}&A'\ultwocell\omit{\id}
}
\]
is an isomorphism on~$X$. Since the above two pastings agree, the compatibility of mates with respect to pasting together with the homotopy exactness of the square to the very right (\autoref{prop:comp-glue}) implies that $X\in\D^A$ lies in the essential image of $u^\ast$ if and only if the canonical mate $\epsilon_1 i_1^\ast\colon u_1^\ast (u_1)_\ast i_1^\ast\to i_1^\ast$ is an isomorphism on~$X$. Since $u_1^\ast\colon\D^{A_1'}\to\D^{A_1}$ is fully faithful the counit $\epsilon_1$ is an isomorphism on $i_1^\ast X$ if and only if $i_1^\ast X$ lies in the essential image of $u_1^\ast$.
\end{proof}

In the construction of reflection morphisms in~\S\ref{sec:reflection-sep} we will see that the results of this section allow us to pass from biproduct $n$-cubes to invertible biproduct $n$-cubes (compare again with the strategy outlined in~\S\ref{sec:guide}).

\section{Reflection morphisms: the separated case}
\label{sec:reflection-sep}

In this section we construct the reflection morphisms in abstract stable derivators and show them to be strong stable equivalences. The strategy behind the construction is described in \S\ref{sec:guide}. Here we deal only with the part of the construction depicted in the lower half of \autoref{fig:rough-strategy} and which is described in more detail in \autoref{fig:detailed-strategy}. Thus, we shall assume that the source/sink is ``separated'' from the category $C$ by freely added morphisms. The inflation/deflation steps indicated by the vertical dashed arrows in \autoref{fig:rough-strategy} are postponed to \S\ref{sec:reflection}.

More precisely, the goal is the following. Let $C\in\cCat$, and let $y_1,\ldots,y_n\in C$ be objects (not necessarily distinct). We can view this data as a functor $y\colon n\cdot\bbone\to C$. We obtain two new categories $D^-$ and $D^+$ by attaching a source of valence~$n$ and a sink of valence~$n$, respectively, to~$C$ by means of the free oriented gluing construction in the sense of~\S\ref{sec:glue} (see the first line of \autoref{fig:detailed-strategy}). Formally, we consider the two pushout diagrams in $\cCat$
\begin{equation}\label{eq:step-2-II}
\vcenter{
\xymatrix{
\coprod_{i=1}^{n}\bbone\sqcup\bbone\ar[r]^-{\mathsf{inc}\sqcup y}\ar[d]&([1]^n_{=n-1})^\lhd\sqcup C\ar[d] &
\coprod_{i=1}^{n}\bbone\sqcup\bbone\ar[r]^-{\mathsf{inc}\sqcup y}\ar[d]&([1]^n_{=n-1})^\rhd\sqcup C\ar[d]
\\
\coprod_{i=1}^{n}[1]\ar[r]_-k&D^-,\pushoutcorner &
\coprod_{i=1}^{n}[1]\ar[r]_-k&D^+,\pushoutcorner
}
}
\end{equation}
where $\mathsf{inc}$ stands for the obvious inclusions $n\cdot\bbone\to\source{n}=([1]^n_{=n-1})^\lhd$ and $n\cdot\bbone\to\sink{n}=([1]^n_{=n-1})^\rhd$.

Here we carry out the individual steps of the construction of a strong stable equivalence of $D^-$ and $D^+$; see~\autoref{fig:detailed-strategy}. Starting with a representation $X\in\D^{D^-}$ in a stable derivator~\D, this roughly amounts to the following.
\begin{enumerate}
\item Glue in a biproduct $n$-cube centered at $\bigoplus_{i=1}^nX_{x_i}$.
\item Pass to the invertible biproduct $n$-cube.
\item Add a cofiber square to the resulting morphism $X_v\to\bigoplus_{i=1}^nX_{x_i}$.
\end{enumerate}
At the level of shapes this corresponds to considering the first three functors in
\begin{equation}\label{eq:strategy}
D^-\to E^-_1\to E^-_2\to F\ot E^+_2\ot E^+_1\ot D^+,
\end{equation}
precise definitions of which are given below. 

As we discuss further below, the category~$F$ is symmetric in the following sense. If we begin with a representation $X\in\D^{D^+}$ and perform similar steps then we end up with a representation of the same category~$F\in\cCat$. At the level of shapes this amounts to considering the remaining three functors in \eqref{eq:strategy}.

We now turn to the first step which essentially amounts to gluing an $n$-cube $[2]^n$ to $D^-$, yielding the functor $D^-\to E^-_1$ in ~\eqref{eq:strategy}; see again \autoref{fig:detailed-strategy}. To define this functor, we consider the following diagram of small categories
\begin{equation}\label{eq:step-2}
\vcenter{
\xymatrix{
[1]^n_{=n-1}\ar[r]\ar[d]&[1]^n_{\geq n-1}\ar[r]\ar[d]&[1]^n\ar[r]\ar[d]&I\ar[r]\ar[d]&[2]^n\ar[d]\\
([1]^n_{=n-1})^\lhd\ar[r]_-{i_1}&([1]^n_{\geq n-1})^\lhd\ar[r]_-{i_2}&([1]^n)^\lhd\ar[r]_-{i_3} &I_1\pushoutcorner\ar[r]_-{i_4}&I_2\pushoutcorner,
}
}
\end{equation}
in which the two pushout squares to the right define the categories $I_1,I_2$, in which the top row is as in~\eqref{eq:biproducts}, and in which the two squares to the left are naturality squares. The functor $D^-\to E^-_1$ is obtained by an application of the free oriented gluing construction to the bottom row in \eqref{eq:step-2}. Thus, we consider the following diagram consisting of pushout squares
\begin{equation}\label{eq:step-2-III}
\vcenter{
\xymatrix@C=1.5em{
([1]^n_{=n-1})^\lhd\sqcup C\ar[d]\ar[r]\ar[d]&
([1]^n_{\geq n-1})^\lhd\sqcup C\ar[r]\ar[d]&([1]^n)^\lhd\sqcup C\ar[r]\ar[d] &I_1\sqcup C\ar[r]\ar[d]&I_2\sqcup C\ar[d]\\
D^-\ar[r]_-{j_1}&A_1\pushoutcorner\ar[r]_-{j_2}&A_2\pushoutcorner\ar[r]_-{j_3} &A_3\pushoutcorner\ar[r]_-{j_4}&E^-_1.\pushoutcorner
}
}
\end{equation}
Associated to the bottom row in this diagram there are the following fully faithful Kan extension morphisms
\begin{equation}\label{eq:step-2-IV}
\D^{D^-}\stackrel{(j_1)_\ast}{\to}\D^{A_1}\stackrel{(j_2)_\ast}{\to}\D^{A_2}\stackrel{(j_3)_!}{\to}\D^{A_3}\stackrel{(j_4)_\ast}{\to}\D^{E^-_1}.
\end{equation}
We note that the category $E^-_1$ comes by definition with a functor
\[
l\colon[2]^n\to I_2\to E^-_1
\]
(see \eqref{eq:step-2} and \eqref{eq:step-2-III}). For every stable derivator~\D we denote by $\D^{E^-_1,\exx}\subseteq\D^{E^-_1}$ the full subderivator spanned by all $X\in\D^{E^-_1}$ for which the $n$-cube $l^\ast X\in\D^{[2]^n}$ is a biproduct $n$-cube (see \autoref{prop:biproducts}). The following proposition implies that $\D^{E^-_1,\exx}$ indeed is a derivator.

\begin{prop}\label{prop:step-2}
Let \D be a stable derivator. The morphisms in \eqref{eq:step-2-IV} are fully faithful and induce an equivalence $\D^{D^-}\simeq\D^{E^-_1,\exx}$. 
This equivalence is pseudo-natural with respect to exact morphisms.
\end{prop}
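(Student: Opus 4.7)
The plan is to realize each of the four Kan extension steps in \eqref{eq:step-2-IV} as an instance of \autoref{cor:glue-img-Kan} applied to a compatible pair of free oriented gluing constructions in the sense of \autoref{con:glue-comp}. Write $B_0=([1]^n_{=n-1})^\lhd, B_1, B_2, B_3, B_4=I_2$ for the categories in the bottom row of \eqref{eq:step-2} and $A_0=D^-, A_1, A_2, A_3, A_4=E^-_1$ for the categories in \eqref{eq:step-2-III}. Each horizontal step in \eqref{eq:step-2-III} exhibits $j_k\colon A_k\to A_{k+1}$ as the pushout of $i_k\colon B_k\to B_{k+1}$ against the fixed right-hand data $(C, y\colon n\cdot\bbone\to C)$, and the source functor $s\colon n\cdot\bbone = [1]^n_{=n-1}\hookrightarrow B_k$ factors through $i_k$, placing us in the setup of \autoref{con:glue-comp}.

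Each $i_k$ is fully faithful by direct inspection: this is clear for the subcategory inclusions $i_1, i_2$, and for $i_3, i_4$ one identifies the pushouts $I_1, I_2$ explicitly and verifies that the original categories $([1]^n)^\lhd$ and $I_1$ sit inside as full subcategories. Applying \autoref{cor:glue-img-Kan} at each step then yields that each of $(j_1)_\ast, (j_2)_\ast, (j_3)_!, (j_4)_\ast$ is fully faithful with essential image controlled by the analogous Kan extension for $i_k$; in particular their composite \eqref{eq:step-2-IV} is fully faithful. Iterating the essential image description, a diagram $X\in\D^{E^-_1}$ lies in the essential image of \eqref{eq:step-2-IV} if and only if the restriction of $X$ along the canonical $I_2\hookrightarrow E^-_1$ lies in the essential image of the composite $(i_4)_\ast(i_3)_!(i_2)_\ast(i_1)_\ast\colon\D^{B_0}\to\D^{B_4}$ from the bottom row of \eqref{eq:step-2}.

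The concluding step is to identify this latter essential image with those $Y\in\D^{I_2}$ whose restriction along $[2]^n\hookrightarrow I_2$ is a biproduct $n$-cube; since $l\colon[2]^n\to E^-_1$ factors as $[2]^n\hookrightarrow I_2\hookrightarrow E^-_1$, this matches the defining condition of $\D^{E^-_1,\exx}$. The plan is to mirror the proof of \autoref{prop:biproducts} by computing the four bottom-row Kan extensions pointwise: one checks that $i_1$ is a sieve (the top of $[1]^n$ admits no outgoing morphisms into $B_0$), so in the pointed setting $(i_1)_\ast$ is right extension by zero via \autoref{prop:ext-by-zero}, and then the slice categories at the remaining three steps are identified so as to produce the biproduct structure on $[2]^n$ exactly as in \autoref{prop:biproducts}, while leaving the value at $v$ and the structure maps $v\to x_i$ unchanged (the extra vertex $v$ never appears in these slices). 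Pseudo-naturality with respect to exact morphisms is automatic, since restrictions and Kan extensions along fully faithful functors between small categories are pseudo-natural in this sense. The main technical obstacle is the slice-category calculation in this final paragraph, requiring care to track the biproduct formation alongside the passive vertex $v$, but it closely parallels the argument underlying \autoref{prop:biproducts}.
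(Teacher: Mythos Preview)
Your approach is essentially the same as the paper's: both reduce via \autoref{cor:glue-img-Kan} to the Kan extensions along the bottom row of \eqref{eq:step-2}, and then analyze those four steps exactly as in the proof of \autoref{prop:biproducts} (sieve/cosieve yielding extensions by zero, the other two adding strongly cartesian $n$-cubes). One small imprecision: your claim that pseudo-naturality is ``automatic, since restrictions and Kan extensions along fully faithful functors \ldots\ are pseudo-natural'' is too quick---Kan extensions do not commute with arbitrary morphisms of derivators, and the paper instead argues that exact morphisms preserve extensions by zero and strongly (co)cartesian $n$-cubes, so they preserve each essential image description; you should phrase it that way.
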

\begin{proof}
The first part of this proof is very similar to the proof of \autoref{prop:biproducts} (see \cite[Prop.~4.9]{gst:tree}). We begin by considering the functors in the bottom row of \eqref{eq:step-2}. Since these functors are fully faithful, the associated
Kan extension morphisms
\begin{equation}\label{eq:step-2-V}
\D^{([1]^n_{=n-1})^\lhd}\stackrel{(i_1)_\ast}{\to}\D^{([1]^n_{\geq n-1})^\lhd}\stackrel{(i_2)_\ast}{\to} \D^{([1]^n)^\lhd}\stackrel{(i_3)_!}{\to}\D^{I_1}\stackrel{(i_4)_\ast}{\to}\D^{I_2}
\end{equation}
are also fully faithful. We now describe the essential images of the respective morphisms, and show that they induce the following pseudo-natural equivalences.
\begin{enumerate}
\item Since $i_1$ is a sieve, the morphism $(i_1)_\ast$ is right extension by zero and hence induces an equivalence onto the full subderivator of $\D^{([1]^n_{\geq n-1})^\lhd}$ defined by this vanishing condition.
\item One easily checks that $(i_2)_\ast$ precisely amounts to adding a strongly cartesian $n$-cube, hence induces a corresponding equivalence of derivators.
\item The functor $i_3$ is a cosieve and $(i_3)_!$ is hence left extension by zero, yielding an equivalence onto the full subderivator of $\D^{I_1}$ defined by this vanishing condition.
\item The morphism $(i_4)_\ast$ precisely amounts to adding strongly cartesian $n$-cubes. In fact, this follows as in the case of \autoref{prop:biproducts}; see~\cite[\S4]{gst:tree} for details.
\end{enumerate}

Now, recall that the functors in the bottom row of~\eqref{eq:step-2-III} are obtained from the corresponding functors in the bottom row of \eqref{eq:step-2} by the free oriented gluing construction. Hence, by \autoref{cor:glue-img-Kan} we can describe the respective essential images of the Kan extension morphisms in \eqref{eq:step-2-IV} in terms of the essential images of the corresponding morphisms in \eqref{eq:step-2-V}. The above explicit description of these latter essential images concludes the proof of the first statement. The pseudo-naturality with respect to exact morphisms follows since exact morphisms preserve right and left extensions by zero as well as strongly cartesian and strongly cocartesian $n$-cubes.
\end{proof}

The next step in this construction consists of inverting the biproduct $n$-cube $[2]^n$ in $E^-_1$, yielding the functor $E^-_1\to E^-_2$ in~\eqref{eq:strategy}; see again~\autoref{fig:detailed-strategy}. To give a precise definition of this functor, we begin by observing that the category $E^-_1$ is obtained from $[2]^n$ by two iterated free gluing constructions in the sense of \S\ref{sec:glue}. In fact, let $E_1\in\cCat$ be defined as the free oriented gluing construction on the left in
\begin{equation}\label{eq:step-3-I}
\vcenter{
\xymatrix{
\coprod_{i=1,\ldots,n}\bbone\sqcup\bbone\ar[r]\ar[d]&[2]^n\sqcup C\ar[d]&&
\bbone\sqcup\bbone\ar[r]\ar[d]&\bbone\sqcup E_1\ar[d]\\
\coprod_{i=1,\ldots,n}[1]\ar[r]&E_1,\pushoutcorner&&
[1]\ar[r]&E^-_1,\pushoutcorner
}
}
\end{equation}
obtained from $n\cdot\bbone\cong [1]^n_{=n-1}\to[1]^n\stackrel{[1,2]^n}{\to} [2]^n$ and $(y_1,\ldots,y_n)\colon n\cdot\bbone\to C$. Note that the category $E^-_1$ is simply the free oriented gluing construction associated to the functors $\id\colon\bbone\to\bbone$, and $(1,\ldots,1)\colon\bbone\to[2]^n\to E_1$, as depicted in the pushout square on the right in \eqref{eq:step-3-I}. In order to obtain the category $E^-_2$ we now simply replace the $n$-cube $[2]^n$ by the invertible $n$-cube~$R^n$, as defined prior to \autoref{cor:inv-hyper}. In detail, we define $E^-_2$ as the corresponding two-step free oriented gluing construction described via the pushout squares
\begin{equation}\label{eq:step-3-II}
\vcenter{
\xymatrix{
\coprod_{i=1,\ldots,n}\bbone\sqcup\bbone\ar[r]\ar[d]&R^n\sqcup C\ar[d]&&
\bbone\sqcup\bbone\ar[r]\ar[d]&\bbone\sqcup E_2\ar[d]\\
\coprod_{i=1,\ldots,n}[1]\ar[r]&E_2,\pushoutcorner&&
[1]\ar[r]&E^-_2.\pushoutcorner
}
}
\end{equation}

Finally, the functor $r\colon E^-_1\to E^-_2$ is obtained by tracing the homotopical epimorphism $q\colon[2]^n\to R^n$ (\autoref{cor:inv-hyper}) through the above constructions, thereby first obtaining a functor $E_1\to E_2$ and then $r\colon E^-_1\to E^-_2$ (\eqref{eq:step-3-I} and \eqref{eq:step-3-II} yield two pairs of compatible oriented gluing constructions in the sense of \S\ref{sec:glue}).

To perform the next step of the construction of reflection functors we now consider the commutative square
\begin{equation}\label{eq:step-3-III}
\vcenter{
\xymatrix{
[2]^n\ar[d]_-q\ar[r]^-i&E^-_1\ar[d]^-r\\
R^n\ar[r]_-j&E^-_2
}
}
\end{equation}
to which we apply our results from \S\ref{sec:glue-epi}.

\begin{prop}\label{prop:step-3}
The functor $r\colon E^-_1\to E^-_2$ is a homotopical epimorphism. Moreover, for every derivator~\D, a diagram $X\in\D^{E^-_1}$ lies in the essential image of $r^\ast\colon\D^{E^-_2}\to\D^{E^-_1}$ if and only if $i^\ast X\in\D^{[2]^n}$ lies in the essential image of $q^\ast\colon\D^{R^n}\to\D^{[2]^n}$.
\end{prop}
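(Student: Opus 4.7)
The plan is to apply \autoref{thm:glue-epi} twice, tracking the two-step pushout construction \eqref{eq:step-3-I}--\eqref{eq:step-3-II} by which $E^-_1$ and $E^-_2$ are built from $[2]^n$ and $R^n$ respectively. The input is that $q\colon[2]^n\to R^n$ is a homotopical epimorphism by \autoref{cor:inv-hyper}.

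The first application concerns the inner gluings (left-hand pushouts). These form a pair of compatible gluing constructions in the sense of \autoref{con:glue-comp}, with $A_1 = [2]^n$, $A_1' = R^n$, $A_2 = A_2' = C$, and $u_1 = q$; the compatibility $s' = q\circ s$ of the attaching data is automatic from the construction of the two pushouts. Applying \autoref{thm:glue-epi} therefore yields a homotopical epimorphism $\bar r\colon E_1\to E_2$ together with the essential image criterion: $Y\in\D^{E_1}$ lies in the essential image of $\bar r^\ast$ if and only if its restriction to $[2]^n\subseteq E_1$ lies in the essential image of $q^\ast$.

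For the outer gluings (right-hand pushouts), $\bar r$ again plays the role of the varying functor, but now on the \emph{second} summand ($A_1 = A_1' = \bbone$, $A_2 = E_1$, $A_2' = E_2$), whereas \autoref{thm:glue-epi} was formulated for a varying first summand. I would bridge this asymmetry by passing to opposite categories: if $A$ is the free oriented gluing of $A_1, A_2$ with edges $s_k \to t_k$, then $A^{\op}$ is the free oriented gluing of $A_2^{\op}$ and $A_1^{\op}$ with edges $t_k \to s_k$, so varying the second summand in the original world becomes varying the first summand in the opposite world. Being a homotopical epimorphism is invariant under $\op$ (the defining class of derivators is closed under $\op$), so \autoref{thm:glue-epi} applied in the opposite world gives the dual statement. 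Alternatively one can re-run the arguments of \S\ref{sec:glue-epi} with the two summands interchanged, since their only ingredients---the standard-factorization description in \autoref{lem:glue}(iv), homotopy-finality of right adjoints, and homotopy exactness of constant and slice squares---are manifestly symmetric in $A_1$ and $A_2$. This symmetry issue is the one genuinely non-routine point in the proof.

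Applying the dual yields that $r\colon E^-_1\to E^-_2$ is a homotopical epimorphism, and that $X\in\D^{E^-_1}$ lies in the essential image of $r^\ast$ if and only if its restriction to $E_1\subseteq E^-_1$ lies in the essential image of $\bar r^\ast$. Chaining this with the essential image criterion from the first step---and identifying the composite inclusion $[2]^n\hookrightarrow E_1\hookrightarrow E^-_1$ with the functor $i$ of \eqref{eq:step-3-III}---gives exactly the characterization that $i^\ast X\in\D^{[2]^n}$ lies in the essential image of $q^\ast$, proving both halves of the proposition.
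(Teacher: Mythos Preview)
Your proposal is correct and follows the same two-step application of \autoref{thm:glue-epi} that the paper uses; the paper's own proof is in fact a two-line sketch that simply says the result ``follows from two applications of \autoref{thm:glue-epi}.'' You are more careful than the paper in one respect: you explicitly flag that in the second (outer) gluing the varying functor sits on the \emph{second} summand, so \autoref{thm:glue-epi} as literally stated does not apply, and you supply the $\op$-argument to obtain the dual. The paper silently takes this dual for granted. So your argument is the same route, just with a gap in the paper's exposition filled in.
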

\begin{proof}
The following diagram expresses that $r\colon E^-_1\to E^-_2$ is obtained in two steps as a free oriented gluing construction starting with $q\colon[2]^n\to R^n$,
\[
\xymatrix{
[2]^n\ar[r]\ar[d]_-q&E_1\ar[r]\ar[d]&E^-_1\ar[d]^-r\\
R^n\ar[r]&E_2\ar[r]&E^-_2.
}
\]
Since $q$ is a homotopical epimorphism and we have a description of the essential image of $q^\ast\colon\D^{[2]^n}\to\D^{R^n}$ (\autoref{cor:inv-hyper}), the result follows from two applications of \autoref{thm:glue-epi}.
\end{proof}

The morphism $r^\ast$ induces an equivalence onto its essential image defined by invertibility conditions (\autoref{cor:inv-hyper}). We are interested in the following restriction of this equivalence. Note that the category~$E^-_2$ comes by construction with a functor
$j\colon R^n\to E^-_2$ (see~\eqref{eq:step-3-II}). For every stable derivator~\D, we denote by $\D^{E^-_2,\exx}\subseteq\D^{E^-_2}$ the full subderivator spanned by all diagrams $X\in\D^{E^-_2}$ for which the $n$-cube $j^\ast X\in\D^{R^n}$ is an invertible biproduct $n$-cube in the sense of \autoref{cor:inv-biprod}.
Recall also the definition of the derivator~$\D^{E^-_1,\exx}$ as considered in~\autoref{prop:step-2}.

\begin{cor}\label{cor:step-3}
Let~\D be a stable derivator. The morphism $r^\ast\colon\D^{E^-_2}\to\D^{E^-_1}$ induces an equivalence of derivators $\D^{E^-_2,\exx}\simeq\D^{E^-_1,\exx}$ which is pseudo-natural with respect to exact morphisms of derivators.
\end{cor}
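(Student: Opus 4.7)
The plan is to deduce this corollary almost directly from \autoref{prop:step-3} together with \autoref{cor:inv-hyper}, using the commutative square \eqref{eq:step-3-III}. Since $r$ is a homotopical epimorphism, the restriction morphism $r^\ast\colon\D^{E^-_2}\to\D^{E^-_1}$ is fully faithful on all of $\D^{E^-_2}$, and hence remains fully faithful when restricted to the subderivator $\D^{E^-_2,\exx}$. The substantive content is therefore that $r^\ast$ sends $\D^{E^-_2,\exx}$ into $\D^{E^-_1,\exx}$ and that the restricted morphism is essentially surjective onto $\D^{E^-_1,\exx}$.

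For the first direction, I would take $Y\in\D^{E^-_2,\exx}$, so that by definition $j^\ast Y\in\D^{R^n}$ is an invertible biproduct $n$-cube, which means $q^\ast j^\ast Y\in\D^{[2]^n,\exx}$. By the commutativity of~\eqref{eq:step-3-III} we have $i^\ast r^\ast Y\cong q^\ast j^\ast Y$, hence $i^\ast r^\ast Y$ is a biproduct $n$-cube, i.e., $r^\ast Y\in\D^{E^-_1,\exx}$. For essential surjectivity, start with $X\in\D^{E^-_1,\exx}$, so that $i^\ast X\in\D^{[2]^n,\exx}$; by condition~(iii) in the definition of a biproduct $n$-cube, $i^\ast X$ satisfies the invertibility conditions of \autoref{cor:inv-hyper}, hence lies in the essential image of $q^\ast\colon\D^{R^n}\to\D^{[2]^n}$. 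Applying \autoref{prop:step-3}, we conclude that $X$ lies in the essential image of $r^\ast$, say $X\cong r^\ast Y$ for some $Y\in\D^{E^-_2}$. To see that $Y\in\D^{E^-_2,\exx}$, note that $q^\ast j^\ast Y\cong i^\ast r^\ast Y\cong i^\ast X$ is a biproduct $n$-cube, which is precisely the definition of $j^\ast Y$ being an invertible biproduct $n$-cube.

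For pseudo-naturality with respect to exact morphisms, I would observe that any exact morphism $F\colon\D\to\E$ commutes (up to coherent isomorphism) with $r^\ast$ simply by pseudo-naturality of restrictions, and preserves the defining conditions of both $\D^{E^-_2,\exx}$ and $\D^{E^-_1,\exx}$: exact morphisms preserve strongly (co)cartesian $n$-cubes and zero objects (hence biproduct $n$-cubes by \autoref{prop:biproducts}), and they commute with the restriction $q^\ast$ that defines invertibility of biproduct $n$-cubes. The coherence data of the desired pseudo-natural equivalence is therefore just inherited from the underlying pseudo-natural morphism $r^\ast$.

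There is no real obstacle here; everything is a formal consequence of the preceding results. The only point where one has to be a bit careful is in verifying that condition~(iii) in the definition of $\D^{[2]^n,\exx}$ indeed matches the invertibility hypothesis of \autoref{cor:inv-hyper}, which is immediate from how these subderivators were set up in \S\ref{sec:guide}.
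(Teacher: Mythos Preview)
Your proposal is correct and is exactly the unpacking of the paper's one-line proof, which simply says ``This is immediate from \autoref{cor:inv-hyper} and \autoref{prop:step-3}.'' You have spelled out the two directions (that $r^\ast$ lands in $\D^{E^-_1,\exx}$ and is essentially surjective onto it) using the commutative square~\eqref{eq:step-3-III} and the definitions of the two $\exx$-subderivators, which is precisely what the paper intends by ``immediate.''
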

\begin{proof}
This is immediate from \autoref{cor:inv-hyper} and \autoref{prop:step-3}.
\end{proof}

The third step in the construction of reflection morphisms amounts to extending the morphisms $X_v\to\bigoplus_{i=1}^nX_{x_i}$ in abstract representations to cofiber squares, as will be made precise by the functor $E^-_2\to F$ in \eqref{eq:strategy}; see again \autoref{fig:detailed-strategy}. We recall that cofiber squares in pointed derivators are constructed as follows (see~\cite[\S3.3]{groth:ptstab}). Let the functor $[1]\to\square=[1]\times[1]$ classify the top horizontal morphism $(0,0)\to(1,0)$ and let $[1]\stackrel{i}{\to}\ulcorner\stackrel{j}{\to}\square$ be the obvious factorization of it. For every pointed derivator~\D the corresponding Kan extension morphisms
\begin{equation}\label{eq:cof-square}
\D^{[1]}\stackrel{i_\ast}\to\D^\ulcorner\stackrel{j_!}{\to}\D^\square
\end{equation}
are fully faithful. Since $i$ is a sieve, $i_\ast$ is right extension by zero (\autoref{prop:ext-by-zero}). It follows that \eqref{eq:cof-square} induces an equivalence of derivators $\D^{[1]}\simeq\D^{\square,\exx}$, where $\D^{\square,\exx}\subseteq\D^\square$ is the full subderivator spanned by the \textbf{cofiber squares}, i.e., those coherent squares $X\in\D^\square$ having the following properties.
\begin{enumerate}
\item The square vanishes at the lower left corner, $X_{0,1}\cong 0$.
\item The square is cocartesian.
\end{enumerate}
This construction is clearly pseudo-natural with respect to right exact morphisms. Given a coherent morphism $X=(f\colon x\to y)\in\D^{[1]}$ the corresponding cofiber square looks like
\[
\xymatrix{
x\ar[r]^-f \ar[d]&y\ar[d]^-{\cof(f)}\\
0\ar[r]&z.\pushoutcorner
}
\]

To prepare the corresponding relative construction, we consider the following diagram of small categories
\begin{equation}\label{eq:step-4}
\vcenter{
\xymatrix{
\bbone\ar[r]^-1\ar[d]_-{(1,\ldots,1)}&[1]\ar[r]^-i\ar[d]&\ulcorner\ar[r]^-j\ar[d]^-{l_1}&\square\ar[d]^-{l_2}\\
R^n\ar[r]&B_1\pushoutcorner\ar[r]_-{i_1}&B_2\pushoutcorner\ar[r]_-{i_2}&B,\pushoutcorner
}
}
\end{equation}
consisting of pushout squares. The square to the left exhibits $B_1$ as a one-point extension of $R^n$ (\autoref{eg:glue-one-point}). And the category~$B$ is obtained from the invertible $n$-cube~$R^n$ by attaching a new morphism with target the center $(1,\ldots,1)\in R^n$ and a square containing this morphism as top horizontal morphism. (The category~$F$ as well as $E^-_2\to F$ in \eqref{eq:strategy} will be obtained from \eqref{eq:step-4} by a free oriented gluing construction.) We begin by considering a pointed derivator~\D and the Kan extension morphisms
\begin{equation}\label{eq:step-4-II}
\D^{B_1}\stackrel{(i_1)_\ast}{\to}\D^{B_2}\stackrel{(i_2)_!}{\to}\D^B.
\end{equation}
Let $\D^{B_2,\exx}\subseteq\D^{B_2}$ be the full subderivator spanned by all $X\in\D^{B_2}$ such that $l_1^\ast X$ vanishes at $(0,1)$. Similarly, let $\D^{B,\exx}\subseteq\D^B$ be the full subderivator spanned by those diagrams $X\in\D^B$ such that $l_2^\ast X$ is a cofiber square.

\begin{lem}\label{lem:step-4}
Let \D be a pointed derivator.
\begin{enumerate}
\item The morphism $(i_1)_\ast$ is fully faithful and induces $\D^{B_1}\simeq\D^{B_2,\exx}$.
\item The morphism $(i_2)_!$ is fully faithful with essential image the full subderivator of $\D^B$ spanned by all $X$ such that $l_2^\ast X$ is cocartesian.
\item The morphisms in \eqref{eq:step-4-II} induce an equivalence $\D^{B_1}\simeq\D^{B,\exx}$.
\end{enumerate}
These equivalences are pseudo-natural with respect to right exact morphisms.
\end{lem}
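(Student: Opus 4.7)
The plan is to prove the three parts in order, with part (i) following directly from extension by zero along a sieve (\autoref{prop:ext-by-zero}), part (ii) reducing to a homotopy finality argument that identifies a left Kan extension with a pushout, and part (iii) obtained by composition together with the observation that the inverse equivalence is a plain restriction morphism.

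For part (i), I will argue that $i_1\colon B_1\to B_2$ is a sieve. The pushout defining $B_2$ glues $\ulcorner$ onto $B_1$ by identifying $(0,0)$ with the source and $(1,0)$ with the center of $R^n$; the remaining object $(0,1)$ of $\ulcorner$ becomes the only new object of $B_2$, and the only non-identity arrow touching it is the incoming morphism from the source. Hence no morphism out of $(0,1)$ lands in $i_1(B_1)$, so $i_1$ is a sieve. \autoref{prop:ext-by-zero} then identifies $(i_1)_\ast$ as a right extension by zero, gives fully faithfulness, and characterizes the essential image by the vanishing condition at $(0,1)$, which is precisely $\D^{B_2,\exx}$.

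For part (ii), I note that $i_2\colon B_2\to B$ is fully faithful with complement the single object $(1,1)$; by \autoref{lem:ff-essim} the essential image of $(i_2)_!$ is characterized by the counit being an isomorphism at $(1,1)$. I will compute $((i_2)_!i_2^\ast X)_{(1,1)}$ via (Der4) as a colimit over the slice $(i_2/(1,1))$. The crucial step is to show that $\ulcorner$ includes homotopy finally into this slice via $a\mapsto(l_1(a), l_2(a\to(1,1)))$, which I plan to do by exhibiting a left adjoint sending $(b,f)$ to the most initial vertex of $\ulcorner$ through which $f$ factors: $(0,0)$ if $b$ is the source, $(0,1)$ if $b=(0,1)$, and $(1,0)$ otherwise (in which case, since no morphism from a nontrivial object of $R^n$ hits $(0,1)$ inside $B_2$, the factorization through the center is forced). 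This is where the main difficulty lies: several factorizations of the morphism from the source to $(1,1)$ exist in $\square$, but the pushout identifies them in $B$, so one has to verify carefully that the proposed left adjoint is well-defined on slice morphisms and satisfies the adjunction relation. Once established, the homotopy finality of right adjoints (\autoref{egs:htpy-exact}(iii)) yields $((i_2)_!i_2^\ast X)_{(1,1)}\cong(j_!j^\ast l_2^\ast X)_{(1,1)}$, and the essential image condition reduces exactly to $l_2^\ast X$ being cocartesian.

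For part (iii), the composite $(i_2)_!(i_1)_\ast$ is fully faithful and lands in $\D^{B,\exx}$: cocartesianness of $l_2^\ast$ comes from (ii), while the vanishing $((i_2)_!(i_1)_\ast X)_{(0,1)}=((i_1)_\ast X)_{(0,1)}=0$ follows from fully faithfulness of $i_2$ together with (i). Essential surjectivity runs the same argument in reverse: given $Z\in\D^{B,\exx}$, (ii) provides $Z\cong (i_2)_!i_2^\ast Z$, the vanishing at $(0,1)$ transfers to $i_2^\ast Z\in\D^{B_2,\exx}$, and (i) supplies a preimage in $\D^{B_1}$. Pseudo-naturality with respect to right exact morphisms is then transparent because the inverse equivalence is the restriction $(i_2 i_1)^\ast$, which is strictly natural in any morphism of derivators; equivalently, right exact morphisms preserve zero objects and cocartesian squares and therefore preserve the defining conditions of both $\D^{B_2,\exx}$ and $\D^{B,\exx}$.
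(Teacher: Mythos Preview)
Your proposal is correct and is precisely the kind of argument the paper has in mind: the paper's own proof consists of a single sentence leaving the homotopy (co)finality arguments to the reader and citing \cite[Prop.~3.10]{groth:ptstab}, so your verification (sieve/extension by zero for (i), a right-adjoint inclusion $\ulcorner\hookrightarrow(i_2/(1,1))$ for (ii), and composition for (iii)) is exactly what was intended. Your identification of the left adjoint in (ii) is the right one; the only point worth tightening is the verification that for $b\in R^n$ every morphism $b\to(1,1)$ in $B$ factors \emph{uniquely} through $(1,0)$, which follows from the normal form \autoref{lem:normal-forms} since neither $v$ nor $(0,1)$ receives maps from nontrivial objects of $R^n$.
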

\begin{proof}
We leave it to the reader to work out the necessary homotopy (co)finality arguments and apply~\cite[Prop.~3.10]{groth:ptstab}.
\end{proof}

We note that the category~$E^-_2$ can be obtained as a free oriented gluing construction from $B_1$. In fact, associated to the functor
\[
n\cdot\bbone=[1]^n_{=n-1}\to[1]^n\stackrel{[1,2]^n}{\to}[2]^n\stackrel{q}{\to}R^n\to B_1
\]
and $y=(y_1,\ldots,y_n)\colon n\cdot\bbone\to C$ there is the free oriented gluing construction given by the pushout square on the left in
\begin{equation}\label{eq:step-4-III}
\vcenter{
\xymatrix{
\coprod_{i=1,\ldots,n}\bbone\sqcup\bbone\ar[r]\ar[d]&B_1\sqcup C\ar[d]\ar[r]&B_2\sqcup C\ar[r]\ar[d]&B\sqcup C\ar[d]\\
\coprod_{i=1,\ldots,n}[1]\ar[r]&E^-_2\pushoutcorner\ar[r]_-{j_1}&F_1\pushoutcorner\ar[r]_-{j_2}&F.\pushoutcorner
}
}
\end{equation}
The remaining two pushout squares are induced by the bottom row in \eqref{eq:step-4}. Thus, in the terminology of \S\ref{sec:glue} we have two pairs of compatible oriented gluing constructions. For every derivator~\D the Kan extension morphisms
\begin{equation}\label{eq:step-4-IV}
\D^{E^-_2}\stackrel{(j_1)_\ast}{\to}\D^{F_1}\stackrel{(j_2)_!}{\to}\D^F.
\end{equation}
are fully faithful. Note that the category~$F$ comes with a functor $l\colon\square\to B\to F$; see~\eqref{eq:step-4} and \eqref{eq:step-4-III}.

\begin{prop}\label{prop:step-4}
Let \D be a pointed derivator. The morphisms \eqref{eq:step-4-IV} are fully faithful and induce an equivalence onto the full subderivator of $\D^F$ spanned by all $X\in\D^F$ such that $l^\ast X\in\D^\square$ is a cofiber square. This equivalence is pseudo-natural with respect to right exact morphisms.
\end{prop}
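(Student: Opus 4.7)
My plan is to imitate the proof of \autoref{prop:step-2}, iteratively transferring the non-relative equivalences of \autoref{lem:step-4} across the compatible gluing constructions in \eqref{eq:step-4-III} via \autoref{cor:glue-img-Kan}. First I would observe that the two rightmost pushout squares in \eqref{eq:step-4-III} exhibit $j_1\colon E^-_2\to F_1$ and $j_2\colon F_1\to F$ as obtained by compatible free oriented gluing constructions in the sense of \autoref{con:glue-comp}, built from the fully faithful functors $i_1\colon B_1\to B_2$ and $i_2\colon B_2\to B$ respectively. Hence \autoref{cor:glue-img-Kan} applies and yields that $(j_1)_\ast$ and $(j_2)_!$ are fully faithful, with essential images characterised by the conditions that the restriction of $Y\in\D^{F_1}$ along the inclusion $B_2\hookrightarrow F_1$ lies in the essential image of $(i_1)_\ast$, and respectively that the restriction of $X\in\D^F$ along $B\hookrightarrow F$ lies in the essential image of $(i_2)_!$. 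In particular the composite \eqref{eq:step-4-IV} is fully faithful.

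To identify its essential image, I would combine these criteria with the descriptions in \autoref{lem:step-4}(i)-(ii). The necessary bookkeeping consists of the commutativities $j_2\circ(B_2\hookrightarrow F_1) = (B\hookrightarrow F)\circ i_2$ (from the rightmost pushout square of \eqref{eq:step-4-III}), $l = (B\hookrightarrow F)\circ l_2$ (definition of $l$), and $i_2\circ l_1 = l_2\circ j$ (read off the rightmost square of \eqref{eq:step-4}). Using these identities to translate both conditions into conditions on $l^\ast X\in\D^\square$, they reduce respectively to ``$l^\ast X$ is cocartesian'' and ``$(l^\ast X)_{(0,1)}\cong 0$'', which jointly amount to $l^\ast X$ being a cofiber square in the sense of the discussion preceding \eqref{eq:step-4}.

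For the pseudo-naturality statement I would argue that $j_1$ is in fact a sieve (the freely adjoined object $(0,1)$ has no outgoing morphisms in $F_1$, as can be seen directly from \eqref{eq:step-4-III}), so that $(j_1)_\ast$ is a right extension by zero by \autoref{prop:ext-by-zero} and therefore commutes with any morphism of pointed derivators. The morphism $(j_2)_!$ amounts essentially to completing a cospan to a cocartesian square, and hence commutes with right exact morphisms since these preserve cocartesian squares. I do not anticipate a genuine obstacle here: the argument is a routine iteration of the mechanism already deployed in \autoref{prop:step-2}, and the only care needed lies in tracking the restrictions through the various pushouts defining $F_1$ and $F$.
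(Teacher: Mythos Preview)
Your proposal is correct and follows essentially the same approach as the paper: the paper's proof is the one-line assertion that the result is immediate from two applications of \autoref{cor:glue-img-Kan} to \autoref{lem:step-4}, and you have faithfully unpacked precisely this argument, including the bookkeeping with the restrictions along $B_2\hookrightarrow F_1$ and $B\hookrightarrow F$ and the translation into the cofiber-square condition on $l^\ast X$. Your additional remarks on pseudo-naturality (that $j_1$ is a sieve so $(j_1)_\ast$ is right extension by zero, and that $(j_2)_!$ amounts to forming a cocartesian square) go slightly beyond what the paper spells out but are correct and in the intended spirit.
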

\begin{proof}
Since we are in the context of two pairs of free oriented gluing constructions, this is immediate from two applications of \autoref{cor:glue-img-Kan} to \autoref{lem:step-4}.
\end{proof}

We are interested in the following induced equivalence. Note that associated to the category~$F$ there are functors
\[
l\colon\square\to F,\quad \text{and}\quad m\colon R^n\to F;
\]
see~\eqref{eq:step-4} and \eqref{eq:step-4-III}. Given a stable derivator \D, we denote by $\D^{F,\exx}\subseteq\D^F$ the full subderivator spanned by all $X\in\D^F$ satisfying the following properties.
\begin{enumerate}
\item The square $l^\ast X\in\D^\square$ is a cofiber square.
\item The $n$-cube $m^\ast X\in\D^{R^n}$ is an invertible biproduct $n$-cube.
\end{enumerate}
Recall also the definition of the derivator~$\D^{E^-_2,\exx}$ as considered in~\autoref{cor:step-3}.

\begin{cor}\label{cor:step-4}
Let \D be a stable derivator. The morphisms~\eqref{eq:step-4-IV} induce an equivalence of derivators~$\D^{E^-_2,\exx}\simeq\D^{F,\exx}$ which is pseudo-natural with respect to exact morphisms.
\end{cor}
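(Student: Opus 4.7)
The plan is to obtain the corollary by restricting the equivalence of \autoref{prop:step-4} to the subderivators cut out by the additional invertibility condition. \autoref{prop:step-4} already provides an equivalence between $\D^{E^-_2}$ and the full subderivator of $\D^F$ determined by condition (i) in the definition of $\D^{F,\exx}$ (the cofiber square condition), so it remains only to match condition (ii) on the $F$-side with the defining condition of $\D^{E^-_2,\exx}$.

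To do this I would trace through \eqref{eq:step-4} and \eqref{eq:step-4-III} to observe that the functor $m\colon R^n \to F$ factors as $R^n \xrightarrow{j} E^-_2 \xrightarrow{j_1} F_1 \xrightarrow{j_2} F$, where $j$ is the functor appearing in \eqref{eq:step-3-II}. Since \autoref{prop:step-4} tells us that $(j_1)_\ast$ and $(j_2)_!$ are fully faithful, the counit $(j_1)^\ast (j_1)_\ast \to \id$ and the unit $\id \to (j_2)^\ast (j_2)_!$ are both isomorphisms (\autoref{egs:htpy-exact}(i)). Consequently, for any $X \in \D^{E^-_2}$ with image $Y = (j_2)_! (j_1)_\ast X \in \D^F$, one has a natural chain of isomorphisms
\[
m^\ast Y \;=\; j^\ast (j_1)^\ast (j_2)^\ast (j_2)_! (j_1)_\ast X \;\cong\; j^\ast (j_1)^\ast (j_1)_\ast X \;\cong\; j^\ast X.
\]
Thus $m^\ast Y \in \D^{R^n}$ is an invertible biproduct $n$-cube if and only if $j^\ast X$ is one, and the equivalence of \autoref{prop:step-4} restricts to the desired equivalence $\D^{E^-_2,\exx} \simeq \D^{F,\exx}$.

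Finally, the pseudo-naturality with respect to exact morphisms is inherited from the pseudo-naturality in \autoref{prop:step-4}, together with the observation that exact morphisms of stable derivators preserve invertible biproduct $n$-cubes (as they preserve the zero object, isomorphisms, and strongly bicartesian $n$-cubes, which by \autoref{cor:inv-biprod} is what is needed). I do not anticipate any serious obstacle here; this corollary is essentially a bookkeeping step combining \autoref{prop:step-4} with the fact that restriction along a composition of fully faithful Kan extensions returns the original diagram on the source category.
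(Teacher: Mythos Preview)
Your proof is correct and follows the same approach as the paper. The paper's own proof is a single sentence (``This is immediate from \autoref{prop:step-4} and the defining exactness and vanishing conditions of $\D^{E^-_2,\exx}$ and $\D^{F,\exx}$''), and you have simply unpacked what that immediacy means: the equivalence of \autoref{prop:step-4} already handles condition~(i), and your computation $m^\ast Y \cong j^\ast X$ via the factorization of $m$ through $j_2 j_1 j$ verifies that condition~(ii) on the $F$-side corresponds exactly to the defining condition of $\D^{E^-_2,\exx}$.
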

\begin{proof}
This is immediate from \autoref{prop:step-4} and the defining exactness and vanishing conditions of $\D^{E^-_2,\exx}$ and $\D^{F,\exx}$.
\end{proof}

It now suffices to assemble the above individual steps in order to settle the reflection morphisms in the separated case.

\begin{thm}\label{thm:reflect-sep}
Let $C\in\cCat$, let $y_1,\ldots,y_n\in C$ (not necessarily distinct), and let $D^-,D^+\in\cCat$ be as in \eqref{eq:step-2-II}. The categories $D^-$ and~$D^+$ are strongly stably equivalent.
\end{thm}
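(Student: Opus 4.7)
The plan is to assemble the three pseudo-natural equivalences from \autoref{prop:step-2}, \autoref{cor:step-3}, and \autoref{cor:step-4} into a single chain on the source side, then run the dual construction on the sink side, and observe that both chains terminate in the common intermediate derivator $\D^{F,\exx}$. Composing will then yield, for every stable derivator $\D$, a pseudo-natural equivalence $\Phi_\D\colon\D^{D^-}\simeq\D^{D^+}$, which by \autoref{defn:sse} is precisely the content of a strong stable equivalence $D^-\sse D^+$.

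For the source side I would first compose to obtain
\[
\D^{D^-}\;\xrightarrow{\sim}\;\D^{E_1^-,\exx}\;\xrightarrow{\sim}\;\D^{E_2^-,\exx}\;\xrightarrow{\sim}\;\D^{F,\exx}.
\]
Each step is the restriction of a fully faithful Kan extension morphism to a subderivator cut out by exactness and vanishing conditions, so the only thing to verify is that the defining conditions of $\D^{E_1^-,\exx}$ and $\D^{E_2^-,\exx}$ are preserved and reflected by the next step. This is immediate because the evaluation functors $l\colon[2]^n\to E_1^-$, $j\colon R^n\to E_2^-$, and $m\colon R^n\to F$ factor through one another by construction; see \eqref{eq:step-3-III} and the definition of $F$. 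Each constituent equivalence is pseudo-natural with respect to exact morphisms, and pseudo-natural equivalences compose.

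Next I would introduce dual categories $E_1^+$ and $E_2^+$ by repeating Constructions \eqref{eq:step-2-III} and \eqref{eq:step-3-II} with $([1]^n_{=n-1})^\lhd$ replaced by $([1]^n_{=n-1})^\rhd$ and with sieves/cosieves and left/right Kan extensions interchanged throughout. The results of \S\ref{sec:glue} and \S\ref{sec:glue-epi} and the proofs of \autoref{prop:step-2}, \autoref{cor:step-3}, and \autoref{prop:step-4} dualize verbatim under stability, producing a parallel chain
\[
\D^{D^+}\;\xrightarrow{\sim}\;\D^{E_1^+,\exx}\;\xrightarrow{\sim}\;\D^{E_2^+,\exx}\;\xrightarrow{\sim}\;\D^{F,\exx}.
\]
The crucial observation is that the same category $F$ and the same subderivator $\D^{F,\exx}$ serves both sides: the attached square in \eqref{eq:step-4} has a zero corner, and by stability the cofiber-square condition (relevant for the source side) coincides with the fiber-square condition (relevant for the sink side), both saying exactly that the square is bicartesian with vanishing $(0,1)$-corner. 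In the same way, invertible biproduct $n$-cubes are self-dual, so the restrictions $m\colon R^n\to F$ used from each side impose the same condition.

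The main obstacle will be verifying this symmetry of $F$ carefully, making sure that the dual gluing construction really reproduces the same category $F$ together with the same defining subderivator, rather than an equivalent but formally distinct variant; this reduces to checking compatibility of the two pushout presentations of $F$ arising from \eqref{eq:step-4-III} and its dual, and to invoking stability on the square and the cube. Once that is in hand, the composite
\[
\D^{D^-}\;\xrightarrow{\sim}\;\D^{F,\exx}\;\xleftarrow{\sim}\;\D^{D^+}
\]
is the desired pseudo-natural equivalence, and its pseudo-naturality in exact morphisms follows because each of its six factors is pseudo-natural, yielding the claimed strong stable equivalence $D^-\sse D^+$.
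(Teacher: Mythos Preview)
Your proposal is correct and follows essentially the same approach as the paper: assemble the chain $\D^{D^-}\simeq\D^{E_1^-,\exx}\simeq\D^{E_2^-,\exx}\simeq\D^{F,\exx}$ from \autoref{prop:step-2}, \autoref{cor:step-3}, \autoref{cor:step-4}, run the dual construction from $D^+$, and use stability (cofiber squares equal fiber squares) to match the two chains at $\D^{F,\exx}$. The paper likewise flags the verification that the dual construction lands in an isomorphic copy of $F$ as the point requiring care, deferring to the argument of \cite[Lemma~9.15]{gst:tree}.
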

\begin{proof}
As discussed at the beginning of this section, the functors in~\eqref{eq:strategy} correspond to the respective steps in the construction of the strong stable equivalence. The results \autoref{prop:step-2}, \autoref{cor:step-3}, and \autoref{cor:step-4} take care of the first three steps. In fact, they show that for every stable derivator \D there are equivalences of derivators
\[
\D^{D^-}\simeq\D^{E_1^-,\exx}\simeq\D^{E_2^-,\exx}\simeq\D^{F,\exx},
\]
which are pseudo-natural with respect to exact morphisms.

If we start with an abstract representation of $D^+$ instead, then, as indicated by the remaining three functors in \eqref{eq:strategy}, we can perform similar constructions to again obtain an abstract representation of $F$. We leave it to the reader to verify that this way we in fact construct a category isomorphic to $F$. (The arguments for this are essentially the same as in the case of \cite[Lemma~9.15]{gst:tree}.) At the level of derivators of representations, this amounts to additional pseudo-natural equivalences
\[
\D^{D^+}\simeq\D^{E_1^+,\exx}\simeq\D^{E_2^+,\exx}\simeq\D^{F,\exx},
\]
which are similar to \autoref{prop:step-2}, \autoref{cor:step-3}, and \autoref{cor:step-4}. These steps respectively amount to gluing in a biproduct $n$-cube, inverting the $n$-cube, and adding a fibre square. Since cofiber squares and fiber squares agree in stable derivators, it follows that the essential image of these three steps is again given by the derivator~$\D^{F,\exx}$ as described prior to~\autoref{cor:step-4}. Putting these pseudo-natural equivalences together, 
\[
\D^{D^-}\simeq\D^{E_1^-,\exx}\simeq\D^{E_2^-,\exx}\simeq\D^{F,\exx}\simeq\D^{E_2^+,\exx}\simeq\D^{E_1^+,\exx}\simeq\D^{D^+},
\]
we obtain the desired strong stable equivalence $\D^{D^-}\simeq\D^{D^+}$.
\end{proof}

\section{Detection criteria for homotopical epimorphisms}
\label{sec:detect}

The aim of this section is to establish two simple detection results for homotopical epimorphisms. These will be used in \S\ref{sec:reflection} to construct reflection morphisms in the general case and thereby to complete the plan from \S\ref{sec:guide}. 

The first criterion is completely straightforward; we show that (co)reflective (co)localizations are homotopical epimorphisms (compare to~\cite[Prop.~6.5]{gst:tree}).

\begin{prop}\label{prop:refl-loc-htpical-epi}
Let $(l,r)\colon A\rightleftarrows B$ be an adjunction of small categories with unit $\eta\colon\id\to rl$ and counit $\ep\colon lr\to\id$.
\begin{enumerate}
\item For every prederivator \D there is an adjunction 
\[
(r^\ast,l^\ast,\eta^\ast\colon\id\to l^\ast r^\ast,\ep^\ast\colon r^\ast l^\ast\to\id)\colon\D^A\rightleftarrows\D^B.
\]
\item If $l$ is a reflective localization, i.e., $r$ is fully faithful, then $l$ is a homotopical epimorphism. Moreover, $X\in\D^A$ lies in the essential image of $l^\ast$ if and only if $X_{\eta_a}\colon X_a\to X_{rla}$ is an isomorphism for all $a\in A-r(B)$.
\item If $r$ is a coreflective colocalization, i.e., $l$ is fully faithful, then $r$ is a homotopical epimorphism. Moreover, $Y\in\D^B$ lies in the essential image of $r^\ast$ if and only if $Y_{\ep_b}\colon Y_{lrb}\to Y_b$ is an isomorphism for all $b\in B-l(A)$.
\end{enumerate} 
\end{prop}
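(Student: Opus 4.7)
Part (i) is an instance of 2-functoriality. Since a prederivator $\D\colon\cCat\op\to\cCAT$ is by definition a 2-functor, applying it to the adjunction data $(l,r,\eta,\ep)$ in $\cCat$ produces adjunction data in $\cCAT$; tracking that $\cCat\op$ reverses $1$-cells but not $2$-cells, one obtains the adjunction $(r^\ast,l^\ast)$ with unit $\eta^\ast\colon\id\to l^\ast r^\ast$ and counit $\ep^\ast\colon r^\ast l^\ast\to\id$. The triangle identities for $r^\ast\dashv l^\ast$ follow from those for $l\dashv r$ by applying $\D$ and using its compatibility with vertical composition and whiskering.

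For (ii) the plan is to combine the standard characterization of a fully faithful right adjoint via invertibility of the counit with axiom (Der2). Since $r$ is fully faithful, the counit $\ep\colon lr\to\id_B$ is invertible in $\cCat$; as $\D$ is a 2-functor, $\ep^\ast\colon r^\ast l^\ast\to\id_{\D(B)}$ is invertible in $\cCAT$. Being the counit of the adjunction $r^\ast\dashv l^\ast$ from (i), this forces the right adjoint $l^\ast$ to be fully faithful. Since this works for every prederivator, it holds in particular for every derivator, so $l$ is a homotopical epimorphism. To characterize the essential image, I would use that $X\in\D^A$ lies there precisely when the unit $\eta^\ast_X\colon X\to l^\ast r^\ast X$ is invertible. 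Axiom (Der2) reduces this to pointwise invertibility of $(\eta^\ast_X)_a=X_{\eta_a}$ for all $a\in A$, the identification $(\eta^\ast_X)_a = X_{\eta_a}$ being immediate from the 2-functoriality of $\D$ applied to the component morphisms of $\eta$. Finally, the triangle identity $r\ep\circ\eta r=\id_r$ combined with the invertibility of $\ep$ shows that each $\eta_{r(b)}$ is already an isomorphism in $A$, so $X_{\eta_{r(b)}}$ is automatically invertible and the condition is non-trivial only on $a\in A-r(B)$.

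Part (iii) is proved dually by exchanging the roles of left and right adjoints: full faithfulness of $l$ makes $\eta$, and hence the unit $\eta^\ast$ of $r^\ast\dashv l^\ast$, invertible, which forces the left adjoint $r^\ast$ to be fully faithful; the essential image is characterized analogously via (Der2) applied to $\ep^\ast_Y\colon r^\ast l^\ast Y\to Y$, and the other triangle identity $\ep l\circ l\eta=\id_l$ absorbs the condition on objects lying in $l(A)$. The only genuine bookkeeping in the entire argument is tracking the orientation of $1$- and $2$-cells under the convention that $\cCat\op$ reverses only $1$-cells; once that is settled the proof is a formal combination of (Der2), the standard characterization of fully faithful adjoints, and the triangle identities.
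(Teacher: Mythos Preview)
Your proof is correct and follows essentially the same route as the paper: 2-functoriality for (i), invertibility of $\ep$ (hence $\ep^\ast$) to get full faithfulness of $l^\ast$, the unit characterization of the essential image reduced pointwise via (Der2), and a triangle identity to absorb the condition on $r(B)$. The only cosmetic difference is that you invoke the triangle identity $r\ep\circ\eta r=\id_r$ at the level of $\cCat$ to conclude that $\eta_{r(b)}$ is already invertible in $A$, whereas the paper applies the transported identity $\ep^\ast r^\ast\circ r^\ast\eta^\ast=\id$ at the level of the prederivator; both yield the same conclusion.
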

\begin{proof}
The first statement is immediate from the fact that every prederivator \D defines a $2$-functor
\[
\D^{(-)}\colon\cCat\op\to\cPDER\colon A\mapsto \D^A
\]
and since $2$-functors preserve adjunctions. By duality it suffices to establish the second statement. Since $r$ is fully faithful, the counit $\ep\colon lr\to\id$ is an isomorphism, hence so is the counit $\ep^\ast\colon r^\ast l^\ast\to\id$. But this means that $l^\ast\colon\D^B\to\D^A$ is fully faithful, i.e., that $l\colon A\to B$ is a homotopical epimorphism. The essential image of $l^\ast$ consists precisely of those $X\in\D^A$ such that the unit $\eta^\ast\colon X\to l^\ast r^\ast X$ is an isomorphism. By (Der2) this is the case if and only if $\eta^\ast_a$ is an isomorphism for every $a\in A$. Now, the triangular identity
\[
\id=\ep^\ast r^\ast \circ r^\ast\eta^\ast\colon r^\ast\to r^\ast l^\ast r^\ast\to r^\ast
\]
and the invertibility of $\ep^\ast$ implies that $r^\ast\eta^\ast$ is an isomorphism. Hence to characterize the essential image of $l^\ast$ it suffices to check $\eta^\ast$ at all objects $a\in A-r(B)$.
\end{proof}

This first criterion is already enough for one of the inflation and deflation steps in \S\ref{sec:reflection}. For the remaining one we establish the following additional criterion, which will be applied to more general localization functors. While these functors do not necessarily admit adjoints, they are still essentially surjective, thereby making the first condition in the coming proposition automatic.

\begin{prop}\label{prop:detect}
Let $u\colon A\to B$ be essentially surjective, let \D be a derivator, and let $u^\ast\colon \D^B\to \D^A$ be the restriction morphism. Let us assume further that $\E\subseteq\D^A$ is a full subprederivator such that
\begin{enumerate}
\item the essential image $\im(u^\ast)$ lies in $\E$, i.e., $\im(u^\ast)\subseteq \E\subseteq \D^A$, and
\item the unit $\eta\colon X\to u^\ast u_! X$ is an isomorphism for all $X\in\E$.
\end{enumerate}
Then $u^\ast\colon\D^B\to\D^A$ is fully faithful and $\im (u^\ast)=\E$. In particular, $\E$ is a derivator.
\end{prop}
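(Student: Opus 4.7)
The plan is to verify fully faithfulness of $u^\ast\colon\D^B\to\D^A$ by showing the counit $\epsilon\colon u_!u^\ast\to\id$ of the parametrized adjunction $(u_!,u^\ast)$ is a natural isomorphism at every level, then read off $\im(u^\ast)=\E$ directly from the hypotheses. Fix $K\in\cCat$ and $Y\in\D^B(K)$. By hypothesis (i), the restriction $u^\ast Y$ lies in $\E(K)$, so by hypothesis (ii) the unit component $\eta_{u^\ast Y}\colon u^\ast Y\to u^\ast u_!u^\ast Y$ is an isomorphism. One of the triangular identities for the adjunction reads $u^\ast(\epsilon_Y)\circ\eta_{u^\ast Y}=\id_{u^\ast Y}$, and hence $u^\ast(\epsilon_Y)$ is an isomorphism as well.

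To upgrade this to $\epsilon_Y$ itself being an isomorphism in $\D(B\times K)$, I would invoke essential surjectivity of $u$ together with (Der2). Essential surjectivity of $u$ implies that $u\times\id_K\colon A\times K\to B\times K$ is essentially surjective, so every $(b,k)$ admits an isomorphism to some $(u(a),k)$. Since \D is a $2$-functor, such an isomorphism yields a natural isomorphism $(b,k)^\ast\iso (a,k)^\ast\circ(u\times\id_K)^\ast$ of evaluation functors, reducing invertibility of the pointwise component $(b,k)^\ast\epsilon_Y$ to that of $(a,k)^\ast u^\ast(\epsilon_Y)$, which we already have. By (Der2) applied in $\D(B\times K)$, $\epsilon_Y$ is an isomorphism, so $u^\ast\colon\D^B\to\D^A$ is fully faithful.

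For the identification $\im(u^\ast)=\E$, the inclusion $\im(u^\ast)\subseteq\E$ is exactly hypothesis (i). Conversely, given any $X\in\E$, hypothesis (ii) provides an isomorphism $X\iso u^\ast u_!X$, exhibiting $X$ in the essential image of $u^\ast$. Thus $u^\ast$ induces an equivalence of prederivators $\D^B\simeq\E$. Since derivator axioms are preserved by equivalences of prederivators, \E inherits the structure of a derivator from $\D^B$.

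The argument is essentially a textbook triangle-identity manipulation, so the main obstacle is really one of bookkeeping: one must carefully track that $u^\ast$ as a morphism of (pre)derivators comprises the functors $(u\times\id_K)^\ast$ at each parameter $K$, that the triangular identities and (Der2) are applied levelwise and pointwise respectively, and that essential surjectivity of $u$ persists after taking products with $K$ so that conservativity of $u^\ast$ (via (Der2)) becomes available uniformly.
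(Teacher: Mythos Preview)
Your proof is correct and follows essentially the same approach as the paper: both use the triangular identity to deduce that $u^\ast\epsilon$ is an isomorphism from the fact that $\eta u^\ast$ is, then invoke essential surjectivity of $u$ together with (Der2) to conclude that $\epsilon$ itself is an isomorphism, and finally read off $\im(u^\ast)=\E$ from hypotheses (i) and (ii). Your version is slightly more explicit about the parametrized bookkeeping (tracking the level $K$ and the product functor $u\times\id_K$), but the argument is the same.
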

\begin{proof}
To prove that $u^\ast$ is fully faithful it suffices to show that $\ep\colon u_! u^\ast \to \id$ is a natural isomorphism. The assumptions imply that $\eta u^\ast$ is a natural isomorphism. Hence, by the triangular identity
\[ 
\xymatrix@1{
\id=u^\ast\ep\circ\eta u^\ast\colon u^\ast \ar[r]^-{\eta u^\ast} & u^\ast u_! u^\ast \ar[r]^-{u^\ast \ep} & u^\ast 
}
\]
it follows that also $u^\ast \ep$ is an isomorphism. In order to conclude that $\ep$ is a natural isomorphism, it suffices by (Der2) to show that $b^\ast \ep$ is an isomorphism for every $b\in B$. This follows immediately from the essential surjectivity of $u$ and the fact that $u^\ast\ep$ is invertible. 

Using the fully faithfulness of $u^\ast$, its essential image consists precisely of those $X\in\D^A$ such that the unit $\eta\colon X\to u^\ast u_! X$ is an isomorphism. The assumptions (i) and (ii) immediately imply that this is the case if and only if $X\in\E$. Finally, $\E$ is also a derivator by the invariance of derivators under equivalences.
\end{proof}

Thus, once we make an \emph{e}ducated guess \E satisfying the above assumptions we get an equivalence onto \E. The relation to homotopical epimorphisms is as follows.

\begin{rmk}
In our later applications the subprederivator $\E\subseteq\D^A$ is a full subprederivator $\D^{A,\exx}$ determined by some exactness conditions. Recall from \cite[\S 3]{gst:basic} that such exactness conditions are formalized by certain (co)cones in $A$ to be populated by (co)limiting (co)cones. As a special case this includes the assumption that certain morphisms are populated by isomorphisms.

In such a situation we hence start with a full subprederivator $\D^{A,\exx}\subseteq\D^A$ for \emph{every} derivator \D. If the assumptions of  \autoref{prop:detect} are satisfied, then this implies first that $u\colon A\to B$ is a homotopical epimorphism and second that the essential image of $u^\ast$ is $\im(u^\ast)=\D^{A,\exx}$.
\end{rmk}

To be able to apply \autoref{prop:detect} in specific situations, it is useful to have better control over the adjunction unit $\eta\colon\id\to u^\ast u_!$.

\begin{con}\label{con:colim-mor}
Let \D be a derivator, $A\in\cCat$ and let $a\in A$. Associated to the square
\[
\xymatrix{
\bbone\ar[r]^-a\ar[d]&A\ar[d]^-{\pi_A}\\
\bbone\ar[r]&\bbone
}
\]
there is the canonical mate 
\begin{equation}\label{eq:can-colim-map}
a^\ast\to\colim_A.
\end{equation}
As a special case relevant in later applications, given a functor $u\colon A\to B$ and $a\in A$ there is the functor $p\colon (u/ua)\to A$. Whiskering the mate \eqref{eq:can-colim-map} in the case of $(a,\id\colon ua\to ua)\in(u/ua)$ with $p^\ast$ we obtain a canonical map
\begin{equation}\label{eq:lkan-colim-map}
a^\ast=(a,\id_{ua})^\ast p^\ast\to \colim_{(u/ua)}p^\ast.
\end{equation} 
\end{con}

\begin{lem}\label{lem:detect-Kan-to-colim}
Let \D be a derivator, $u\colon A\to B$, and $a\in A$. The component of the unit $a^\ast\eta\colon a^\ast\to a^\ast u^\ast u_!$ is isomorphic to $a^\ast\to\colim_{(u/ua)}p^\ast$ \eqref{eq:lkan-colim-map}. In particular, $\eta_a$ is an isomorphism if and only if this is the case for \eqref{eq:lkan-colim-map}.
\end{lem}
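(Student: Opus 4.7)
The plan is to exhibit $a^*\eta_X$ as the composite of the canonical map~\eqref{eq:lkan-colim-map} with the isomorphism supplied by (Der4), so that the two maps differ only by that isomorphism and hence share invertibility status.

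First I would recognize that the component $a^*\eta\colon a^* \to a^* u^* u_!$ is precisely the canonical mate of the commutative square
\[
\xymatrix{
\bbone\ar[r]^-a\ar[d]_-\id\drtwocell\omit{\id}&A\ar[d]^-u\\
\bbone\ar[r]_-{ua}&B
}
\]
populated by the identity 2-cell; this is an immediate unwinding of the formulas defining the canonical mate \eqref{eq:hoexmate1'}.

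Next I would factor this square as the horizontal pasting
\[
\xymatrix{
\bbone\ar[r]^-{(a,\id)}\ar[d]_-\id\drtwocell\omit{\id}&(u/ua)\ar[r]^-p\ar[d]_-{\pi}\drtwocell\omit{}&A\ar[d]^-u\\
\bbone\ar[r]_-\id&\bbone\ar[r]_-{ua}&B,
}
\]
where the right-hand square is the slice square from (Der4) and the left-hand square is populated by $\id$. That the pasting agrees with the original square follows from $p(a,\id_{ua})=a$ together with the fact that the slice 2-cell $\alpha\colon up\to ua\circ\pi$ has component $\id_{ua}$ at the object $(a,\id_{ua})\in(u/ua)$.

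By the functoriality of canonical mates with respect to horizontal pasting (\autoref{egs:htpy-exact}(iv)), the mate of the whole square is the pasting of the mates of the two pieces. The mate of the left square is the canonical map $(a,\id_{ua})^*\to\colim_{(u/ua)}$ from \autoref{con:colim-mor}; whiskering with $p^*$ at the object $X$ yields exactly the morphism~\eqref{eq:lkan-colim-map}. The mate of the right slice square, evaluated at $p^*X$, is the canonical map $\colim_{(u/ua)}p^*X\to (ua)^* u_! X$, which is an isomorphism by (Der4). Putting these together gives the factorization
\[
a^* X \;\xrightarrow{\eqref{eq:lkan-colim-map}}\; \colim_{(u/ua)} p^* X \;\xrightarrow{\sim}\; a^* u^* u_! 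X
\]
and the composite is $a^*\eta_X$. The second arrow being an isomorphism then immediately yields the final clause. I expect the only minor obstacle is the purely bookkeeping verification that the pasted square really coincides with the original one, i.e., the identification of the whiskered slice 2-cell with the identity at $(a,\id_{ua})$; everything else is formal manipulation of mates.
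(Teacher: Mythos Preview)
Your proof is correct and follows essentially the same approach as the paper: identify $a^\ast\eta$ as a canonical mate, factor through the slice square, and conclude via (Der4) and the functoriality of mates with pasting. The only cosmetic difference is that the paper first writes $a^\ast\eta$ as the mate of a two-square pasting (a constant square followed by the defining square for $\eta$) before re-pasting, whereas you go directly to the single commutative square $\bbone\to A\to B$ over $\bbone\to B$; this is the same argument with one trivial step absorbed.
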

\begin{proof}
To reformulate that the adjunction unit $\eta_a$ is an isomorphism we consider the pasting on the left in 
\[
\xymatrix{
\bbone\ar[r]^-a\ar[d]&A\ar[r]\ar[d]&A\ar[d]^-u&&\bbone\ar[r]^-{(a,\id_{ua})}\ar[d]&(u/ua)\ar[r]^-p\ar[d]\drtwocell\omit{}&A\ar[d]^-u\\
\bbone\ar[r]_-a&A\ar[r]_-u&B,&&\bbone\ar[r]&\bbone\ar[r]_-{ua}&B,
}
\]
in which the square to the left is constant and hence homotopy exact. Note that this pasting agrees with the pasting on the right in which the square to the right is a slice square and hence also homotopy exact. The functoriality of canonical mates with pasting concludes the proof.
\end{proof}

We will later apply the previous lemma in situations in which the slice category admits homotopy final functors from certain simpler shapes. For this purpose we collect the following result.

\begin{lem} \label{lem:detect-final}
Let $u\colon A\to B$ be a homotopy final functor and let $a\in A$.
\begin{enumerate}
\item The map $u(a)^\ast\to\colim_B$ \eqref{eq:can-colim-map} is naturally isomorphic to $a^\ast u^\ast\to\colim_A u^\ast$, the whiskering of an instance of \eqref{eq:can-colim-map} with $u^\ast$.
\item If $A$ admits a terminal object $\infty$, then the map $a^\ast\to\colim_A$ \eqref{eq:can-colim-map} is naturally isomorphic to $a^\ast\to\infty^\ast$.
\end{enumerate}
\end{lem}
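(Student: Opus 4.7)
The plan for part (i) is to apply the functoriality of mates under horizontal pasting to the diagram
\[
\xymatrix{
\bbone\ar[r]^-a\ar[d]_-=\drtwocell\omit{\id}&A\ar[r]^-u\ar[d]^-{\pi_A}\drtwocell\omit{\id}&B\ar[d]^-{\pi_B}\\
\bbone\ar[r]_-=&\bbone\ar[r]_-=&\bbone,
}
\]
both squares being populated by identity $2$-cells. The outer pasted square corresponds to $u(a)\colon\bbone\to B$ over identities, and its mate is precisely $u(a)^\ast\to\colim_B$. By the compatibility of canonical mates with pasting (\autoref{egs:htpy-exact}(iv)), this mate factors as the horizontal pasting of the mates of the two pieces, yielding
\[
a^\ast u^\ast\longrightarrow\colim_A u^\ast\longrightarrow\colim_B,
\]
in which the first arrow is the whiskering of $a^\ast\to\colim_A$ \eqref{eq:can-colim-map} with $u^\ast$ and the second arrow is the canonical mate associated to $u$. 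The latter is an isomorphism precisely because $u$ is homotopy final, so after identifying $\colim_B$ with $\colim_A u^\ast$ along this iso, the map $u(a)^\ast\to\colim_B$ is naturally isomorphic to $a^\ast u^\ast\to\colim_A u^\ast$, as claimed.

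For part (ii), the key input is that $\infty\colon\bbone\to A$ is right adjoint to $\pi_A$ (since $\infty$ is terminal), hence homotopy final by \autoref{egs:htpy-exact}(iii), so the canonical map $\infty^\ast\to\colim_A$ is an isomorphism. Since $\infty$ is terminal, there is a unique $2$-cell $\alpha\colon a\Rightarrow\infty$ between the functors $\bbone\to A$, inducing $\alpha^\ast\colon a^\ast\Rightarrow\infty^\ast$ by the $2$-functoriality of \D. I would then verify that the square
\[
\xymatrix{
a^\ast\ar[r]\ar[d]_-{\alpha^\ast}&\colim_A\ar[d]^-=\\
\infty^\ast\ar[r]^-\sim&\colim_A,
}
\]
whose horizontal arrows are the canonical maps \eqref{eq:can-colim-map}, commutes. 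Writing each such canonical map as the whiskering $b^\ast\eta$ of the unit $\eta\colon\id\to\pi_A^\ast(\pi_A)_!$ with $b^\ast$ for $b\in\{a,\infty\}$, naturality of $\alpha^\ast$ reduces the commutativity to the identity $(\pi_A\cdot\alpha)^\ast=\id$, which holds because $\pi_A\cdot\alpha$ is a $2$-cell between two copies of $\id_\bbone\colon\bbone\to\bbone$ and hence trivial. Consequently the map $a^\ast\to\colim_A$ agrees, under the iso $\infty^\ast\cong\colim_A$, with $\alpha^\ast\colon a^\ast\to\infty^\ast$.

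The main bookkeeping concern is tracking the directions of canonical mates and of induced $2$-cells under the $\cCat\op$ convention; once these are set up, both parts reduce to standard applications of the mate calculus in the spirit of \cite[Lemma~1.14]{groth:ptstab}, with the homotopy-exactness of constant squares handling the trivial pieces.
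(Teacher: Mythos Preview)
Your proof is correct and follows the same approach as the paper. Part~(i) uses the identical pasting argument, and your treatment of part~(ii) is precisely the careful unraveling that the paper's terse instruction ``unravel the definition of \eqref{eq:can-colim-map} using $\infty^\ast$ as a model for $\colim_A$'' asks for; your verification that $(\pi_A\cdot\alpha)^\ast=\id$ is exactly what makes the identification work.
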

\begin{proof}
Using the functoriality of canonical mates, for the first statement it suffices to observe that the following two pasting agree
\[
\xymatrix{
\bbone \ar[d] \ar[r]^-a &A \ar[d]\ar[r]^-u & B \ar[d] &&
\bbone \ar[d] \ar[r]^-{ua} & B \ar[d]\\
\bbone \ar[r] & \bbone\ar[r] & \bbone, &&
\bbone \ar[r] & \bbone,
}
\]
and that the square in the middle is homotopy exact by assumption on $u$. For the second statement it suffices to unravel the definition of \eqref{eq:can-colim-map} using $\infty^\ast$ as a model for $\colim_A$.
\end{proof}

We finish the section with another lemma related to \autoref{con:colim-mor} which will be useful when dealing with a more complicated instance of \autoref{prop:detect} in the next section.

\begin{lem} \label{lem:detect-ff}
Let \D be a derivator, let $u\colon A \to B$ be fully faithful, and let $a\in A$. The map $a^\ast\to\colim_A$ \eqref{eq:can-colim-map} at $X\in\D^A$ is isomorphic to $u(a)^\ast \to \colim_B$ \eqref{eq:can-colim-map} at $u_!X$.
\end{lem}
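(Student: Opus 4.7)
The plan is to express both maps as canonical mates of identity-populated squares and to relate them via horizontal pasting. The map $u(a)^\ast\to\colim_B$ evaluated at $u_!X$ is the canonical mate of the square
\[
\xymatrix{
\bbone \ar[r]^-{u(a)} \ar[d] \drtwocell\omit{\id} & B \ar[d]^-{\pi_B}\\
\bbone \ar[r] & \bbone,
}
\]
which decomposes as the horizontal pasting
\[
\xymatrix{
\bbone \ar[r]^-a \ar[d] \drtwocell\omit{\id} & A \ar[r]^-u \ar[d]^-{\pi_A} \drtwocell\omit{\id} & B \ar[d]^-{\pi_B}\\
\bbone \ar[r] & \bbone \ar[r] & \bbone.
}
\]
By the compatibility of canonical mates with pasting (\autoref{egs:htpy-exact}(iv)), the mate of the outer rectangle equals the composite
\[
u(a)^\ast = a^\ast u^\ast \longrightarrow \colim_A u^\ast \longrightarrow \colim_B,
\]
where the first arrow is the mate $a^\ast\to\colim_A$ of the left square whiskered with $u^\ast$ and the second is the mate of the right square.

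Next, I evaluate this composite at $u_!X$ to obtain $u(a)^\ast u_!X\to\colim_A u^\ast u_!X\to\colim_B u_!X$ and identify each factor. For the first factor, naturality of the canonical mate $a^\ast\to\colim_A$ (viewed as a transformation of functors $\D^A\to\D(\bbone)$) applied to the unit $\eta_X\colon X\to u^\ast u_!X$ yields a commuting square relating it to $a^\ast X\to\colim_A X$. Since $u$ is fully faithful, $\eta_X$ is an isomorphism by \autoref{egs:htpy-exact}(i), so this factor is identified with $a^\ast X\to\colim_A X$ up to the canonical isomorphisms $a^\ast X\cong u(a)^\ast u_!X$ and $\colim_A X\cong\colim_A u^\ast u_!X$.

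For the second factor, the key claim is that the composite
\[
\colim_A X \xrightarrow{\colim_A\eta_X} \colim_A u^\ast u_!X \longrightarrow \colim_B u_!X
\]
is the canonical isomorphism $\colim_A X\cong\colim_B u_!X$ coming from $\pi_B\circ u=\pi_A$ together with the $2$-functoriality isomorphism $(\pi_A)_!=(\pi_Bu)_!\cong(\pi_B)_!u_!$. This is a short formal computation using the triangular identity for the adjunction $(u_!,u^\ast)$, and it is the only step requiring some care. Combining the two identifications yields the required commutative square
\[
\xymatrix{
a^\ast X \ar[r] \ar[d]_-\cong & \colim_A X \ar[d]^-\cong \\
u(a)^\ast u_!X \ar[r] & \colim_B u_!X,
}
\]
which establishes the isomorphism between the two maps.
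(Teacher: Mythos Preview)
Your proof is correct and follows essentially the same approach as the paper. The paper packages the argument slightly more tersely by drawing an L-shaped pasting diagram (your horizontal pasting for the bottom row together with the unit square $A\to A\to A\to B$ on top of the right column) and invoking the functoriality of mates with pasting in one stroke; your steps~2 and~3 unpack exactly this, with your triangular-identity computation in step~3 being what the vertical pasting of the unit square with the right bottom square amounts to.
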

\begin{proof}
Considering the pasting on the left in the diagram
\[
\vcenter{
\xymatrix{
&A\ar[r]\ar[d]&A\ar[d]^-u\\
\bbone\ar[r]^-a\ar[d]&A\ar[r]^-u\ar[d]&B\ar[d]\\
\bbone\ar[r]&\bbone\ar[r]&\bbone,
}
}
\qquad\qquad
\vcenter{
\xymatrix{
a^\ast\ar[r]^-\eta_-\cong\ar[d]&a^\ast u^\ast u_!\ar[d]\\
\colim_A\ar[r]_-\cong&\colim_B u_!,\\
}
}
\]
it is immediate from the functoriality of mates with pasting that the square on the right commutes.
\end{proof}

\section{General reflection morphisms}
\label{sec:reflection}

In this section we implement the remaining steps of the strategy outlined in \S\ref{sec:guide}, namely the inflation and deflation steps from \autoref{fig:rough-strategy}. This will allow us to finish the construction of a strong stable equivalence between the categories $C^+$ and $C^-$ depicted in \autoref{fig:adjoining} (see \autoref{thm:reflect}).

We start by formalizing the construction of the categories $C^-$ and $C^+$. Let $C\in\cCat$, and let $y_1,\ldots,y_n\in C$ be objects. We denote by $y\colon n\cdot\bbone\to C$ the resulting functor. For all preparatory results before \autoref{cor:contract-sep-mor}, we adopt the following hypothesis which will allow us to apply results from \autoref{sec:amalgamation}.

\begin{hyp} \label{hyp:y's}
The functor $y\colon n\cdot\bbone \to C$ is injective on objects. Equivalently, $y_1, y_2, \ldots, y_n$ are pairwise distinct objects of $C$.
\end{hyp}

We obtain $C^-$ and $C^+$ by attaching a source of valence~$n$ and a sink of valence~$n$ to~$C$, respectively. More precisely, the source of valence~$n$ is the cone~$\source{n}$ obtained from~$n\cdot\bbone$ by adjoining an initial object, and dually for the sink~$\sink{n}$. Using the obvious inclusion functors $n\cdot\bbone\to\source{n}$ and $n\cdot\bbone\to\sink{n}$ we define $C^-$ and $C^+$ as the respective pushouts in
\begin{equation}\label{eq:C+C-}
\vcenter{
\xymatrix{
n\cdot\bbone\ar[r]^-y\ar[d]&C\ar[d]&&n\cdot\bbone\ar[r]^-y\ar[d]&C\ar[d]\\
\source{n}\ar[r]&C^-\pushoutcorner,&&\sink{n}\ar[r]&C^+.\pushoutcorner
}
}
\end{equation}
Assuming \autoref{hyp:y's}, note that $C \to C^+$ and $C \to C^-$ are fully faithful by \autoref{prop:amalg-full}, and we view these functors as inclusions.

As already mentioned in \S\ref{sec:guide}, the two inflation and deflation steps are not dual to each other. Starting with a representation of $C^-$ we separate the morphisms adjacent to the source by adding morphisms pointing in the \emph{same} direction, while in the other case we add morphisms pointing in the \emph{opposite} direction. 

\begin{figure}
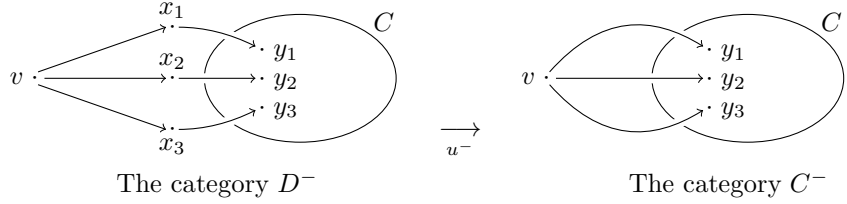

\centering
\TikzFigSeparateSource
\caption{The functor $u^-\colon D^- \to C^-$ which contracts the edges $x_i \to y_i$. It is used to separate the source of $C^-$.}
\label{fig:separate-C^-}
\end{figure}

Let us start with the easier case and consider the functor $u^-\colon D^-\to C^-$ as shown in \autoref{fig:separate-C^-}. Formally we can construct the functor by means of the following pushout squares in $\cCat$, where we use the inclusion of the target object $1\colon\bbone\to[1]$ and the collapse functor $\pi\colon[1]\to\bbone$ in the upper line.
\begin{equation}\label{eq:step-1-minus}
\vcenter{
\xymatrix{
n\cdot\bbone\ar[r]\ar[d]_-y&(n\cdot[1])^\lhd\ar[r]\ar[d]&\source{n}\ar[d]\\
C\ar[r]_-{j^-}&D^-\pushoutcorner\ar[r]_-{u^-}&C^-\pushoutcorner
}
}
\end{equation}
The functor $j^-$ is fully faithful by \autoref{prop:amalg-full}, and for every derivator~\D, the restriction morphism $(u^-)^\ast\colon\D^{C^-}\to\D^{D^-}$ separates the objects adjacent to the source. We denote by $\D^{D^-,\exx}\subseteq\D^{D^-}$ the full subderivator spanned by all diagrams $X\in\D^{D^-}$ such that $k^\ast X\in\D^{n\cdot[1]}$ consists of isomorphisms, where $k\colon n\cdot[1]\to (n\cdot[1])^\lhd\to D^-$ is the obvious functor.

\begin{prop}\label{prop:step-1}
The functor $u^-\colon D^-\to C^-$ is a homotopical epimorphism. Moreover, for every derivator~\D the essential image of $(u^-)^\ast\colon\D^{C^-}\to\D^{D^-}$ is  $\D^{D^-,\exx}$ and the resulting equivalence $(u^-)^\ast\colon\D^{C^-}\simeq\D^{D^-,\exx}$ is pseudo-natural with respect to arbitrary morphisms of derivators.
\end{prop}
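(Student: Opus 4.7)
The plan is to apply \autoref{prop:detect} with $u = u^-$ and $\E = \D^{D^-,\exx}$. Essential surjectivity of $u^-$ is clear from \eqref{eq:step-1-minus}: it sends $v \mapsto v$, each $x_i \mapsto y_i$, and is the identity on $C$. For condition (i) of \autoref{prop:detect}, the key observation is that $u^-$ collapses each new edge $\alpha_i\colon x_i \to y_i$ to the identity $\id_{y_i}$, so for any $Y \in \D^{C^-}$ the restriction $(u^-)^*Y$ sends each $\alpha_i$ to an isomorphism and hence lies in $\D^{D^-,\exx}$.

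For condition (ii) we must show, given $X \in \D^{D^-,\exx}$, that the unit $\eta\colon X \to (u^-)^*(u^-)_!X$ is an isomorphism. By (Der2) it suffices to check this at every object $a$ of $D^-$, and by \autoref{lem:detect-Kan-to-colim} this reduces to verifying that the canonical map $X_a \to \colim_{(u^-/u^-a)} p^*X$ of \eqref{eq:lkan-colim-map} is an isomorphism. We treat the three kinds of objects of $D^-$ separately, and in each case identify a convenient terminal object of the slice so as to apply \autoref{lem:detect-final}(ii).

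If $a = v$, then since $v$ is freely attached as a source only $\id_v$ maps into $v$ in $C^-$, so $(u^-/v)$ is the terminal category and the canonical map is tautologically an isomorphism. If $a = c \in C$, then $(c, \id_c)$ is a terminal object of $(u^-/c)$: for any $(d, f\colon u^-(d) \to c)$, existence and uniqueness of the required lift $g\colon d \to c$ with $u^-(g) = f$ is immediate when $d \in C$ (since $u^-$ is the identity there), follows from the unique factorization $x_i \xto{\alpha_i} y_i \to c$ when $d = x_i$, and follows when $d = v$ from the fact that \autoref{hyp:y's} provides a bijection $u^-\colon \Hom_{D^-}(v,c) \xto{\cong} \Hom_{C^-}(v,c)$. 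Since the starting object $(c, \id_c)$ of the canonical map already coincides with this terminal object, the map is the identity. Finally, if $a = x_i$, the same analysis applied with $c = y_i$ shows $(y_i, \id_{y_i})$ is terminal in $(u^-/y_i)$, and the unique morphism $(x_i, \id_{y_i}) \to (y_i, \id_{y_i})$ in the slice is $\alpha_i$ itself; hence \autoref{lem:detect-final}(ii) identifies the canonical map with $X_{\alpha_i}\colon X_{x_i} \to X_{y_i}$, an isomorphism by the defining condition of $\D^{D^-,\exx}$.

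The pseudo-naturality of the resulting equivalence with respect to arbitrary morphisms of derivators is then automatic: restriction morphisms along functors of small categories belong to the pseudo-natural structure of $\D^{(-)}\colon\cCat\op\to\cPDER$, and the condition defining $\D^{D^-,\exx}$ is pointwise invertibility of certain morphisms, which is preserved by every morphism of derivators. The main obstacle is the combinatorial case analysis of the slice categories $(u^-/u^-a)$, which becomes manageable only once \autoref{hyp:y's} is invoked to control $\Hom_{D^-}(v,c)$.
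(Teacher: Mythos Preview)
Your argument is correct, but it takes a considerably more laborious route than the paper. You apply \autoref{prop:detect} and carry out a case-by-case analysis of the slice categories $(u^-/u^-a)$, invoking \autoref{lem:detect-Kan-to-colim} and \autoref{lem:detect-final}. This is precisely the machinery the paper develops for the \emph{harder} dual case $u^+\colon D^+\to C^+$ in \autoref{prop:contract-sep-mor}, where no shortcut is available. For $u^-$ there is one: the paper simply observes that $u^-$ is a reflective localization, with fully faithful right adjoint $r\colon C^-\to D^-$ sending $v\mapsto v$ and acting as the identity on~$C$ (so the arrow $v\to y_i$ in $C^-$ goes to the composite $v\to x_i\to y_i$). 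Then \autoref{prop:refl-loc-htpical-epi} applies directly: the counit $u^-r\to\id$ is the identity, and the only nontrivial components of the unit $\id\to ru^-$ are the maps $\eta_{x_i}\colon x_i\to y_i$, which immediately gives the description of the essential image. Your slice computations effectively rediscover this adjunction by hand (the terminal objects $(c,\id_c)$ you find in $(u^-/c)$ are exactly the values of $r$), so the content is the same; the paper's formulation just packages it more efficiently. The payoff of your approach is uniformity with the treatment of $u^+$, while the paper's buys brevity and avoids any need to analyze hom-sets or invoke \autoref{hyp:y's} in the argument itself.
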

\begin{proof}
This is an immediate application of \autoref{prop:refl-loc-htpical-epi}. In fact, the functor $u^-\colon D^- \to C^-$ is a reflective localization, a fully faithful right adjoint being given by the obvious functor $r\colon C^-\to D^-$ which sends $v$ to $v$ and which is the identity on $C$. Let us denote the resulting adjunction by
\[
(u^-,r,\eta\colon\id\to r\circ u^-,\ep=\id\colon u^-\circ r\to\id).
\]
The only non-identity components of the adjunction unit $\eta$ are those at $x_i\in D^-$ for $i=1,\ldots,n$ in which case they are given by 
\[
\eta_{x_i}\colon x_i\to y_i,\qquad i=1,\ldots,n.
\]
By \autoref{prop:refl-loc-htpical-epi} we conclude that $u^-$ is a homotopical epimorphism and that $X\in\D^{D^-}$ lies in the essential image of $(u^-)^\ast$ if and only if $X_{x_i}\to X_{y_i}$ is an isomorphism, which is to say that $X\in\D^{D^-,\exx}$.
\end{proof}

\begin{figure}
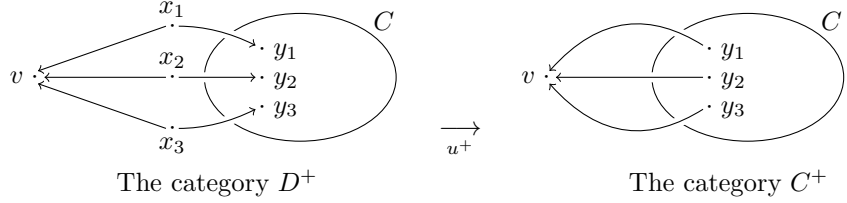

\centering
\TikzFigSeparateSink
\caption{The functor $u^+\colon D^+ \to C^+$ which contracts the edges $x_i \to y_i$. It is used to separate the sink of $C^+$.}
\label{fig:separate-C^+}
\end{figure}

The other inflation and deflation step turns out to be a bit more involved, and the situation is shown in \autoref{fig:separate-C^+}. We again have defining pushouts squares
\begin{equation}\label{eq:step-1-plus}
\vcenter{
\xymatrix{
n\cdot\bbone\ar[r]\ar[d]_-y&Z_n\ar[r]^-{q}\ar[d]^-{t}&\sink{n}\ar[d]^-{y^\rhd}\\
C\ar[r]_-{j^+}&D^+\pushoutcorner\ar[r]_-{u^+}&C^+,\pushoutcorner
}
}
\end{equation}
where $Z_n$ is the free category generated by the quiver
\begin{equation} \label{eq:Z_n}
\vcenter{
\hbox{$Z_n\colon$ \quad}
}
\vcenter{
\xymatrix{
& x_1 \ar[rrd] \ar[ld] & x_2 \ar[rd] \ar[ld]|(.34)\hole & \cdots & x_{n-1} \ar[ld] \ar[rd]|(.34)\hole & x_n \ar[lld] \ar[rd] \\
y_1 & y_2 && v && y_{n-1} & y_n,
}
}
\end{equation}
where $n\cdot\bbone\to Z_n$ classifies $y_1,\ldots,y_n$, and where $q\colon Z_n \to \sink{n}$ sends each $x_i$ and $y_i$ to the $i$-th copy of $\bbone$ and $v$ to the terminal object $\infty$. Assuming \autoref{hyp:y's}, both $j^+$ and $u^+j^+$ are fully faithful, and we again view $u^+j^+$ as an inclusion. As it will be important in further computations, we spell out what morphisms in $D^+$ and $C^+$ look like.	

\begin{lem} \label{lem:morphisms-C+D+}
~
\begin{enumerate}
\item Every non-identity morphism in the category $C^+$ has a unique expression of one of the forms $\gamma$, $\omega$, $\omega\gamma$, where $\gamma$ stands for a non-identity morphism of $C$ and $\omega$ stand for a non-identity morphism of $\sink{n}$.
\item Every non-identity morphism in the category $D^+$ has a unique expression of one of the forms $\gamma$, $\omega$, $\gamma\omega$, where $\gamma$ stands for a non-identity morphism of $C$ and $\omega$ stand for a non-identity morphism of $Z_n$.
\end{enumerate}
\end{lem}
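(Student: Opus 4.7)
The plan is to identify $C^+$ and $D^+$ as pushouts of categories and then invoke the normal-form machinery developed in \S\ref{sec:amalgamation}. Both pushouts have the form
\[
A \longleftarrow n\cdot\bbone \xrightarrow{\;y\;} C,
\]
with $A = \sink{n}$ in case (i) and $A = Z_n$ in case (ii). In either case the left-hand leg identifies $n\cdot\bbone$ with the discrete subcategory of $A$ on the objects $\{y_1,\ldots,y_n\}$ which, under \autoref{hyp:y's}, are also distinct in $C$. A general morphism in the pushout is then represented by a finite alternating word of non-identity morphisms drawn from $A$ and from $C$, with consecutive letters meeting at some $y_i$; my task is to show that for our specific $A$ every such word collapses to one of the three claimed forms, and that the representative is unique.

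For part (i), the non-identity morphisms of $\sink{n}$ out of $\{y_1,\ldots,y_n\}$ all target the cone point $\infty$, and $\infty$ is not in the image of $C$, so any $\sink{n}$-letter in an alternating word must be terminal. Moreover such a letter can only be preceded by a $C$-morphism landing at the relevant $y_i$. Hence any morphism of $C^+$ is either purely a $C$-morphism $\gamma$, purely an $\sink{n}$-morphism $\omega$, or of the form $\omega\gamma$ with $\gamma \in C$ ending at some $y_i$ and $\omega \in \sink{n}$ starting there. Uniqueness of the factorization $\omega\gamma$ is immediate from \autoref{hyp:y's}: the source of $\omega$ pins down the intermediate object $y_i$ unambiguously, and then $\gamma$ is recovered uniquely as a morphism of $C$.

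For part (ii) the analysis is the mirror image, but not the formal dual. Every non-identity morphism of $Z_n$ has source in $\{x_1,\ldots,x_n\}$, none of which lie in $C$, so a $Z_n$-letter in an alternating word must be \emph{initial}. Such a letter can be followed by at most one $C$-morphism, and only when its target is one of the $y_i$ rather than the isolated object $v$; this yields the three claimed forms $\gamma$, $\omega$, $\gamma\omega$. Uniqueness again reduces to the distinctness of the $y_i$'s guaranteed by \autoref{hyp:y's}.

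The only genuine obstacle is to put the alternating-word description of morphisms in a categorical pushout on firm footing; the combinatorial lemmas of \S\ref{sec:amalgamation} are designed precisely for this purpose, so the proof ultimately reduces to verifying that $(\sink{n}, n\cdot\bbone, y)$ and $(Z_n, n\cdot\bbone, y)$ satisfy the hypotheses needed to apply those lemmas and then reading off the very short normal forms from the shape of $\sink{n}$ and $Z_n$.
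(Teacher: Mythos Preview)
Your proposal is correct and takes essentially the same approach as the paper, which simply records the result as an immediate consequence of \autoref{lem:normal-forms}. You have spelled out in detail why the reduced sequences guaranteed by that lemma have length at most two and why the letters appear in the stated order, which is exactly the verification the paper leaves implicit.
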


\begin{proof}
This is an immediate consequence of \autoref{lem:normal-forms}.
\end{proof}

For every derivator \D we denote by $\D^{D^+,\exx}\subseteq\D^{D^+}$ the full subderivator formed by the coherent diagrams $X$ such that $X_{x_i} \to X_{y_i}$ is an isomorphism for every $i=1,\dots,n$.

\begin{prop} \label{prop:contract-sep-mor}
If $y\colon n\cdot\bbone \to C$ is injective on objects, then $u^+\colon D^+\to C^+$ \eqref{eq:step-1-plus} is a homotopical epimorphism. Moreover, for $\D\in\cDER$ the essential image of $(u^+)^\ast\colon\D^{C^+}\to\D^{D^+}$ is  $\D^{D^+,\exx}$ and the resulting equivalence $(u^+)^\ast\colon\D^{C^+}\simeq\D^{D^+,\exx}$ is pseudo-natural with respect to arbitrary morphisms of derivators.
\end{prop}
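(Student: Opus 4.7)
The plan is to apply \autoref{prop:detect} to $u^+\colon D^+\to C^+$, which is surjective on objects, with the subprederivator $\E=\D^{D^+,\exx}$. Condition (i) is immediate: since $u^+(\omega_i)=\id_{y_i}$, restriction along $u^+$ of any diagram sends each $\omega_i$ to an identity. For condition (ii), by (Der2) and \autoref{lem:detect-Kan-to-colim} it suffices to show that for every $a\in D^+$ and every $X\in\D^{D^+,\exx}$, the canonical map
\[
a^\ast X \longrightarrow \colim_{(u^+/u^+(a))} p^\ast X
\]
is an isomorphism. I split the verification into three cases according to $a$.

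When $a=c\in C$, the enumeration of morphisms via \autoref{lem:morphisms-C+D+} shows that $(c,\id_c)$ is terminal in $(u^+/c)$: every morphism $u^+(b)\to c$ in $C^+$ lies in $C$ and lifts uniquely through $u^+$, and the possibilities $b\in\{x_j,v\}$ are excluded. The claim then follows from \autoref{lem:detect-final}(ii). When $a=x_i$, the slice $(u^+/y_i)$ again has terminal object $(y_i,\id_{y_i})$ by the same enumeration, and the slice morphism $(x_i,\id_{y_i})\to(y_i,\id_{y_i})$ is realized by $\omega_i$; the map in question is therefore $X_{\omega_i}$, which is invertible by hypothesis.

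The main case is $a=v$, where $(u^+/v)$ has no terminal object. Let $S'\subseteq(u^+/v)$ be the full subcategory on the objects $(x_j,\omega_j),(y_j,\omega_j)$ for $j=1,\ldots,n$ and $(v,\id_v)$. Using \autoref{lem:morphisms-C+D+} one checks that $S'\cong Z_n$ from~\eqref{eq:Z_n}, and that for an arbitrary $s=(b,f)\in(u^+/v)$ the unique normal form $f=\omega_l\bar\gamma$ (with $\bar\gamma\colon u^+(b)\to y_l$ in $C$, possibly the identity) provides an initial object of $(s/S')$: this object is $((y_l,\omega_l),\bar\gamma)$ when $b\in C$, is $((y_l,\omega_l),\bar\gamma\omega_j)$ when $b=x_j$ and the normal form is nontrivial, and is $s$ itself when $s$ already lies in $S'$ (including $b\in\{y_j,v\}$ and the case $b=x_j$, $l=j$, $\bar\gamma=\id$). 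Hence each $(s/S')$ has contractible nerve, so $S'\hookrightarrow(u^+/v)$ is homotopy final.

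Applying \autoref{lem:detect-final}(i), the canonical map becomes $v^\ast(X|_{Z_n})\to\colim_{Z_n}(X|_{Z_n})$. Presenting $Z_n$ as the pushout $\sink{n}\cup_{n\cdot\bbone}(n\cdot[1])$ in $\cCat$, this colimit decomposes as a homotopy pushout $X_v\sqcup^{h}_{\coprod_i X_{x_i}}\coprod_i X_{y_i}$ whose left-hand structure map is $\coprod_i X_{\omega_i}$; since each $X_{\omega_i}$ is an isomorphism by the hypothesis $X\in\D^{D^+,\exx}$, the pushout is canonically $X_v$ and the canonical map becomes the identity. The main obstacle is the bookkeeping involved in enumerating morphisms in $D^+$ and $C^+$ and verifying homotopy finality of $S'\hookrightarrow(u^+/v)$; the remaining steps are formal. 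Pseudo-naturality with respect to arbitrary morphisms of derivators is automatic because the equivalence is implemented by the restriction morphism $(u^+)^\ast$.
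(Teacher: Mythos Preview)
Your proof follows the paper's strategy closely: both apply \autoref{prop:detect}, split into the same three cases, and in the case $a=v$ both identify the same homotopy final subcategory $S'\cong Z_n$ of the slice $(u^+/v)$ (the paper calls it $H$ and verifies finality by exhibiting the inclusion as a right adjoint, which amounts to your initial-object check for each $(s/S')$).

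The only divergence is in the last step of the $v$-case. The paper passes to the further full subcategory $H'=\sink{n}\subset Z_n$ obtained by deleting the $y_i$, observes that $j_!\colon\D^{H'}\to\D^{Z_n}$ is fully faithful with essential image exactly those diagrams inverting each $x_i\to y_i$, and then invokes \autoref{lem:detect-ff} together with the terminality of $v\in H'$. You instead compute $\colim_{Z_n}$ directly as a homotopy pushout $X_v\sqcup^h_{\coprod_i X_{x_i}}\coprod_i X_{y_i}$ and use cobase change of the isomorphism $\coprod_i X_{\omega_i}$. Your conclusion is correct, but the stated justification---``presenting $Z_n$ as the pushout $\sink{n}\cup_{n\cdot\bbone}(n\cdot[1])$ in $\cCat$''---does not by itself yield a pushout decomposition of the colimit; pushouts of small categories do not in general induce pushout formulas for homotopy colimits of diagrams. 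What makes the formula valid here is that the evident functor $Z_n\to\ulcorner$ (sending $y_i\mapsto(1,0)$, $x_i\mapsto(0,0)$, $v\mapsto(0,1)$) is a Grothendieck opfibration, so its left Kan extension is computed fiberwise; alternatively one can argue exactly as the paper does via $H'$. Either way this is a small gap in justification rather than an error in the argument.
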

\begin{proof}
Let us fix a derivator $\D$ and let $\E=\D^{D^+,\exx}$. We show that \autoref{prop:detect} applies. Clearly $u^+$ is essentially surjective on objects and $\im((u^+)^\ast) \subseteq \E$. It remains to verify the assumption \autoref{prop:detect}(ii) and by (Der2) it suffices to check the invertibility of the unit $\eta$ at every $d\in D^+$. By \autoref{lem:detect-Kan-to-colim} this is the case if and only if the following instance of \eqref{eq:lkan-colim-map}
\begin{equation}\label{eq:contract-sep-mor}
(d,\id_{u^+d})^\ast p^\ast\to \colim_{(u^+/u^+d)}p^\ast 
\end{equation}
is invertible for every $d\in D^+$ and on $\D^{D^+,\exx}$. Here, $p\colon (u^+/u^+d)\to D^+$ is the canonical functor, and there are the following three cases. 

First, let $d = j^+(c),c\in C$, so that $u^+d=c$. Since $(j^+c,\id_c)\in(u^+/c)$ is a terminal object, by \autoref{lem:detect-final} the corresponding morphism \eqref{eq:contract-sep-mor} is an isomorphism on $\D^{D^+,\exx}$ if and only if $(j^+c,\id)^\ast p^\ast\to(j^+c,\id)^\ast p^\ast$ is an isomorphism on $\D^{D^+,\exx}$, and this is even true for all $X\in\D^{D^+}$. 

Suppose next that $d = x_i$ for some $i=1,\ldots, n$. In this case $u^+d = y_i \in C^+$ and it is easy to see that $(u^+/y_i)$ admits 
\[
(x_i, \id_{y_i})\to (y_i, \id_{y_i})
\]
as homotopy final subcategory, where the map is given by the freely attached map from $Z_n$. Two applications of \autoref{lem:detect-final} imply that we have to show that $x_i^\ast\to y_i^\ast$ is an isomorphism on $\D^{D^+,\exx}$ which is true by the defining exactness properties.

The remaining case is $d=v$. With the aid of \autoref{lem:morphisms-C+D+}, we divide the objects $w = (d', g\colon u^+(d') \to v)$ of $(u^+/v)$ into five disjoint classes, according to what $d'$ is and whether the structure morphism $g$ factors through a non-identity morphism in $C$. Each object $w \in (u^+/v)$ has exactly one of the following forms (where unlabeled arrows $y_i \to v$ always stand for the maps in $C^+$ coming from $\sink{n}$ in~\eqref{eq:C+C-})
\begin{enumerate}
\item $w = (v, \id_v)$,
\item $w = (x_i, y_i \to v)$ for some $i \in \{1,\ldots,n\}$,
\item $w = (y_i, y_i \to v)$ for some $i \in \{1,\ldots,n\}$,
\item $w = (j^+(c), c \overset{h}\to y_i \to v)$ for some $c \in C$ and non-identity map $h$ in $C$, or 
\item $w = (x_i, y_i \overset{h}\to y_j \to v)$ for $i,j \in \{1,\ldots,n\}$ and non-identity map $h$ in $C$.
\end{enumerate}
Let $H\subseteq(u^+/v)$ be the full subcategory spanned by the objects of type (i)--(iii). This category is a free category generated by the following quiver, where the object associated to which we wish to inspect the map \eqref{eq:contract-sep-mor} is in the box (for brevity we denote the objects only by the corresponding object of $D^+$), 
\begin{equation} \label{eq:htpy-final-v-case}
\vcenter{
\hbox{$H\colon$ \quad}
}
\vcenter{
\xymatrix{
& x_1 \ar[rrd] \ar[ld] & x_2 \ar[rd] \ar[ld]|(.34)\hole & \cdots & x_{n-1} \ar[ld] \ar[rd]|(.34)\hole & x_n \ar[lld] \ar[rd] \\
y_1 & y_2 && *+[F]{v} && y_{n-1} & y_n.
}
}
\end{equation}
Another short computation reveals that every object of type (v) admits a unique map in $(u^+/v)$ to the object of type (iv) with $c=y_i$ and the same morphism $h$ in $C$, and that every object of type (iv) admits a unique map in $(u^+/v)$ to an object of type (iii) obtained by stripping off $h$ from the structure morphism. In particular, the inclusion $H\to (u^+/v)$ is a right adjoint and hence homotopy final, so that \autoref{lem:detect-final} applies. As an upshot so far, the decoration of the objects in \eqref{eq:htpy-final-v-case} defines a functor $i\colon H\to D^+$ and it remains to show that the map
\[
v^\ast i^\ast(X)\to \colim_Hi^\ast X
\]
which is an instance of \eqref{eq:can-colim-map} is an isomorphism for all $X\in\D^{D^+,\exx}$.

To this end, let $j\colon H'\to H$ be the full subcategory of $H$ obtained by removing $y_i,i=1,\ldots,n$. It is straightforward to show that $j_!\colon\D^{H'}\to\D^H$ is fully faithful with essential image precisely those $Y\in\D^H$ such that $Y_{x_i}\to Y_{y_i}$ is invertible (compare to \cite[Prop.~3.12.(1)]{groth:ptstab}). In particular, for $X\in\D^{D^+,\exx}$ the restriction $i^\ast X$ belongs to this essential image, and \autoref{lem:detect-ff} reduces hence our task to show that $v^\ast\to\colim_{H'}$ \eqref{eq:can-colim-map} is an isomorphism on $j^\ast i^\ast X\in\D^{H'}$. By \autoref{lem:detect-final} this is even the case for every diagram in $\D^{H'}$ since $v\in H'$ is a terminal object.

To summarize, all assumptions of \autoref{prop:detect} are satisfied and $u^+$ is hence a homotopical epimorphism with essential image $\D^{D^+,\exx}$.
\end{proof}

Now we shall revoke \autoref{hyp:y's}.

\begin{cor} \label{cor:contract-sep-mor}
Let $C\in\cCat$, let $y_1,\ldots,y_n\in C$ (not necessarily distinct), and consider the functors $u^-\colon D^- \to C^-$ and $u^+\colon D^+ \to C^+$ constructed again by the pushouts \eqref{eq:step-1-minus} and \eqref{eq:step-1-plus}, respectively. Then $u^-$ and $u^+$ are still homotopical epimorphisms and the essential images are $\D^{D^-,\exx}$ and $\D^{D^+,\exx}$ defined by the same exactness conditions as in \autoref{prop:step-1} and \autoref{prop:contract-sep-mor}, respectively.
\end{cor}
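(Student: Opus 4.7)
The plan is to extend the arguments of \autoref{prop:step-1} and \autoref{prop:contract-sep-mor} to the non-injective setting, observing that neither argument fundamentally requires the $y_i$'s to be distinct.

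For $u^-$, I would reapply \autoref{prop:refl-loc-htpical-epi} via the same right adjoint $r\colon C^- \to D^-$, which sends $v$ to $v$ and acts as the identity on $C$. The key point is that $r$ remains well-defined and fully faithful regardless of injectivity: every morphism from $v$ to an object $c \in C$ in $C^-$ factors uniquely through some $y_i$, and this matches the corresponding normal-form factorization $v \to x_i \to y_i \to c$ in $D^-$. The description of the essential image in \autoref{prop:step-1} then carries over verbatim.

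For $u^+$, the plan is again to verify the hypotheses of \autoref{prop:detect} by a slice computation. One first checks that the normal-form description of \autoref{lem:morphisms-C+D+} continues to hold, provided the $n$ morphisms $y_i \to v$ in $\sink{n}$ are indexed by $i \in \{1, \ldots, n\}$ rather than by their targets, so that they remain distinct morphisms in $C^+$ even when $y_i = y_j$ in $C$. With this convention, condition (i) of \autoref{prop:detect} is immediate, and condition (ii) reduces via \autoref{lem:detect-Kan-to-colim} to a slice analysis. The cases $d \in C$ and $d = x_i$ proceed exactly as in the injective case: the first uses the terminality of $(j^+(c), \id_c)$ in $(u^+/c)$, and the second uses the homotopy final subcategory spanned by the edge $(x_i, \id_{y_i}) \to (y_i, \id_{y_i})$ of $(u^+/y_i)$, invoking the exactness condition defining $\D^{D^+,\exx}$.

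The main obstacle is the case $d = v$. The slice $(u^+/v)$ is larger in the non-injective setting, since each coincidence $y_i = y_j$ produces extra objects $(x_j, \alpha)$ and $(y_j, \alpha)$ for each of the distinct morphisms $\alpha\colon y_j \to v$ coming from the $n$ inputs of $\sink{n}$. My plan is to verify that the full subcategory $H \subseteq (u^+/v)$ spanned by $(v, \id_v)$ together with $(x_i, \alpha_i)$ and $(y_i, \alpha_i)$ for $i = 1, \ldots, n$, where $\alpha_i$ denotes the $i$-th of these morphisms, is still homotopy final. This is checked by exhibiting a unique minimal reduction of each remaining object of $(u^+/v)$ to an object of $H$ via the normal forms. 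Crucially, $H$ is isomorphic to $Z_n$ as an abstract category, with the same generating morphisms $x_i \to y_i$ and $x_i \to v$, so the remainder of the argument is verbatim that of \autoref{prop:contract-sep-mor}: removing the $(y_i, \alpha_i)$ from $H$ yields $\sink{n}$ with $v$ terminal, and combining \autoref{lem:detect-ff} with \autoref{lem:detect-final} completes the verification.
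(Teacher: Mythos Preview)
Your approach is correct but takes a genuinely different route from the paper. The paper gives a two-line reduction to the injective case: by \autoref{lem:separate-obj}(i) one factors $y = p\circ\tilde y$ with $p\colon \widetilde C \to C$ an equivalence and $\tilde y$ injective on objects; then, using \autoref{prop:amalg-basic} together with \autoref{lem:separate-obj}(ii) (essentially left properness of the canonical model structure on $\cCat$), the pushouts $\widetilde D^{\pm}, \widetilde C^{\pm}$ built from $\tilde y$ are equivalent to $D^{\pm}, C^{\pm}$, so \autoref{prop:step-1} and \autoref{prop:contract-sep-mor} apply directly to the tilded diagram and transfer across the equivalences. Your route instead reruns the slice computation in place. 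This also works---for instance each new slice object $(x_i,\alpha_j)$ with $y_i=y_j$, $i\ne j$, has $(y_j,\alpha_j)$ as its unique reflection in $H$, so the inclusion $H\hookrightarrow(u^+/v)$ remains a right adjoint---but it costs you a direct verification of the normal forms of \autoref{lem:morphisms-C+D+}, since \autoref{lem:normal-forms} as stated requires both legs to be injective on objects. The paper's approach is shorter and avoids repeating any computation; yours has the merit of making the slice structure in the non-injective case fully explicit.
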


\begin{proof}
We will discuss only $u^+$, the case of $u^-$ is similar. Suppose $y\colon n\cdot\bbone \to C$ is any functor. Thanks to \autoref{lem:separate-obj}(i) there is a factorization $y = p \tilde{y}$ such that $p\colon \widetilde C \to C$ is an equivalence of categories and $\tilde{y_1}, \dots, \tilde{y_n}$ are pairwise distinct objects in $\widetilde C$. Replacing $y$ by $\tilde y$ in~\eqref{eq:step-1-plus}, we obtain \autoref{prop:amalg-basic} and \autoref{lem:separate-obj}(ii) a diagram whose lower row changes only up to equivalence.
\end{proof}

Finally, we can establish the main result of this paper.

\begin{thm}\label{thm:reflect}
Let $C\in\cCat$, let $y_1,\ldots,y_n\in C$ (not necessarily distinct), and let $C^-,C^+\in\cCat$ be as in \eqref{eq:C+C-}. The categories $C^-$ and~$C^+$ are strongly stably equivalent.
\end{thm}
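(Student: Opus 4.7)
The plan is to combine the separated reflection equivalence of \autoref{thm:reflect-sep} with the contractions of the separating morphisms from \autoref{prop:step-1} and \autoref{cor:contract-sep-mor}. For each stable derivator $\D$ this yields a zig-zag
\[
\D^{C^-}\stackrel{(u^-)^\ast}{\simeq}\D^{D^-,\exx}\hookrightarrow\D^{D^-}\simeq\D^{D^+}\hookleftarrow\D^{D^+,\exx}\stackrel{(u^+)^\ast}{\simeq}\D^{C^+}.
\]
All six equivalences appearing here are already known to be pseudo-natural with respect to exact morphisms of stable derivators, so once the central equivalence has been shown to restrict correctly to the exact subderivators the composite will assemble into the desired strong stable equivalence $(-)^{C^-}\simeq(-)^{C^+}$ of $2$-functors $\cDER_{\mathrm{St},\mathrm{ex}}\to\cDER$.

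The key technical step is therefore to check that the central equivalence $\Phi_\D\colon\D^{D^-}\simeq\D^{D^+}$ provided by \autoref{thm:reflect-sep} carries $\D^{D^-,\exx}$ onto $\D^{D^+,\exx}$. Both subderivators are defined by the very same kind of exactness condition, namely that the $n$ freely glued arrows $\beta_i\colon x_i\to y_i$ be sent to isomorphisms. Now recall that the central equivalence is itself built as the chain
\[
\D^{D^-}\simeq\D^{E_1^-,\exx}\simeq\D^{E_2^-,\exx}\simeq\D^{F,\exx}\simeq\D^{E_2^+,\exx}\simeq\D^{E_1^+,\exx}\simeq\D^{D^+},
\]
whose inverses are given by restriction along a zig-zag of fully faithful functors $D^-\to E_1^-\to E_2^-\to F\leftarrow E_2^+\leftarrow E_1^+\leftarrow D^+$. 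The point will be that each of the intermediate categories contains a canonical isomorphic copy of the $\beta_i$ (inherited from the iterated free oriented gluings), and that all functors in the zig-zag preserve them. Consequently, if $\D^{F,\exx'}\subseteq\D^{F,\exx}$ denotes the full subderivator cut out by the additional condition that the $\beta_i$ be sent to isomorphisms, then the chain of equivalences will restrict to $\D^{D^-,\exx}\simeq\D^{F,\exx'}\simeq\D^{D^+,\exx}$, as desired.

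The main obstacle will be the bookkeeping required to verify this preservation claim: one must trace through the constructions of \S\ref{sec:reflection-sep} step by step and confirm that the $n$ freely adjoined copies of $[1]$ producing the $\beta_i$ remain fully faithfully embedded (with disjoint images) in each of $E_1^\pm$, $E_2^\pm$ and $F$, and that the homotopical epimorphism $r\colon E_1^\pm\to E_2^\pm$ does not identify them with any other arrow. Both facts should reduce to \autoref{lem:glue} together with the observation that the homotopical epimorphism $q\colon[2]^n\to R^n$ of \autoref{cor:inv-hyper} affects only the $n$-cube factor in the gluing and leaves the freely glued $[1]$'s untouched. Once this is in place, composing the six equivalences above and invoking the pseudo-naturality of each individual step yields the required strong stable equivalence $C^-\sse C^+$.
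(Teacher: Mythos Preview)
Your proposal is correct and follows essentially the same approach as the paper: the paper's proof is precisely to combine \autoref{thm:reflect-sep} with \autoref{cor:contract-sep-mor}, noting that the separated equivalence $\D^{D^-}\simeq\D^{D^+}$ restricts to $\D^{D^-,\exx}\simeq\D^{D^+,\exx}$. The paper dismisses this restriction step as ``direct from the construction,'' whereas you spell out the reason---that every functor in the zig-zag $D^-\to E_1^-\to E_2^-\to F\leftarrow E_2^+\leftarrow E_1^+\leftarrow D^+$ is the identity on the freely glued arrows $\beta_i$ (this is immediate from the fact that every intermediate category is a compatible free oriented gluing over the same data $n\cdot[1]\to C$); one small caveat is that $r\colon E_1^\pm\to E_2^\pm$ is a homotopical epimorphism rather than fully faithful, but this does not affect your argument since $r$ still restricts to the identity on the $\beta_i$'s.
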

\begin{proof}
In \autoref{thm:reflect-sep} we constructed a pseudo-natural equivalence $\D^{D^-} \simeq \D^{D^+}$. It is direct from the construction of this equivalence that it restricts to a pseudo-natural equivalence $\D^{D^-,\exx} \simeq \D^{D^+,\exx}$. Invoking \autoref{cor:contract-sep-mor}, we obtain a chain
\[
\D^{C^-}\simeq\D^{D^-,\exx}\simeq\D^{D^+,\exx}\simeq\D^{C^+}
\]
of pseudo-natural equivalences. Putting them together, we obtain the strong stable equivalence
\begin{equation}\label{eq:sse-reflect}
(s^-,s^+)\colon\D^{C^-}\simeq\D^{C^+},
\end{equation}
concluding the proof.
\end{proof}

\begin{defn}
Let \D be a stable derivator, let $C\in\cCat$, let $y_1,\ldots,y_n\in C$ (not necessarily distinct), and let $C^-,C^+\in\cCat$ be as in~\eqref{eq:C+C-}. The components $s^-,s^+$ of the strong stable equivalence~in \eqref{eq:sse-reflect}, witnessing that $C^-\sse C^+$, are \textbf{(general) reflection morphisms}. 
\end{defn}

\section{Applications to abstract representations theory}
\label{sec:applications}

In this section we draw some consequence of the main theorem in this paper (\autoref{thm:reflect}). Since the categories $C^+$ and $C^-$ are strongly stably equivalent, we obtain abstract tilting results for various contexts. To begin with, let us specialize to representations over a ring.

\begin{eg}\label{eq:derived-cat-alg}
Let $R$ be a (possibly non-commutative) ring, let $C\in\cCat$, let $y_1,\ldots,y_n\in C$ (not necessarily distinct), and let $C^-,C^+\in\cCat$ be as in~\eqref{eq:C+C-}. 
\begin{enumerate}
\item There is an exact equivalence of categories $\D_R^{C^-}(\bbone)\stackrel{\Delta}{\simeq}\D_R^{C^+}(\bbone).$
\item If~$C$ has only finitely many objects, then the category algebras $RC^-$ and $RC^+$ are derived equivalent over $R$,
\begin{equation}\label{eq:der-eq-cat-alg}
D(RC^-)\stackrel{\Delta}{\simeq} D(RC^+).
\end{equation}
\end{enumerate}
\end{eg}

In fact, the first statement is \cite[Prop.~4.18]{groth:ptstab} while the second statement follows from \autoref{eg:grothendieck-shift}. However, having a strong stable equivalence is a stronger result in the following three senses.

\begin{enumerate}
\item Simply by choosing specific stable derivators, this yields exact equivalences of derived or homotopy categories of representations over rings or schemes, of differential graded representations, of spectral representations and of other types of representations (see \cite[\S5]{gst:basic}).
\item There are equivalences of derivators of representations as opposed to having mere equivalences of underlying categories. For example, in the case of homotopy derivators of combinatorial, stable model categories~$\cM$ it is a formal consequence of having an equivalence $\ho_\cM^Q\sim\ho_\cM^{Q'}$ and \cite{renaudin} that the corresponding model categories of representations $\cM^Q,\cM^{Q'}$ are related by a zigzag of Quillen equivalences.
\item The equivalences are pseudo-natural with respect to exact morphisms, hence commute with various types of morphisms like restriction of scalars, induction and coinduction of scalars, derived tensor and hom functors, localizations and colocalizations.
\end{enumerate}

With this added generality in mind, for the rest of the section we mostly focus on the shapes $C^-,C^+$. As a first instance, we recover the main result of \cite{gst:tree}.

\begin{thm}[{\cite[Corollary 9.23]{gst:tree}}]
Let $T$ be a finite oriented tree and let $T'$ be a reorientation of $T$. The trees $T$ and $T'$ are strongly stably equivalent.
\end{thm}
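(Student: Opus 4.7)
The strategy is to realize the reorientation from $T$ to $T'$ as a finite sequence of elementary sink/source flips, each of which is covered by \autoref{thm:reflect}, and to compose the resulting strong stable equivalences.

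To make the first step precise, let $v$ be any sink of $T$ with neighbors $y_1,\ldots,y_n$ (the vertices from which edges point into $v$), and set $C = T\setminus\{v\}$ with the restricted orientation. Then the defining pushout squares \eqref{eq:C+C-} identify $T$ with $C^+$ (attaching a sink to $y_1,\ldots,y_n$). The tree $\widetilde T$ obtained by reversing precisely the edges at $v$ — turning it into a source — is similarly identified with $C^-$. Thus \autoref{thm:reflect} yields $T = C^+ \sse C^- = \widetilde T$, and the analogous statement holds if we start with $v$ a source. This takes care of every single sink/source flip.

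The combinatorial content left to verify is that any two orientations $T, T'$ of the same underlying finite tree are connected by a sequence of such elementary flips. This is proved by induction on the number of vertices. Pick a leaf $v$ of the underlying graph with unique neighbor $w$; note that a leaf is always a sink or a source. If $T$ and $T'$ disagree on the edge between $v$ and $w$, flip $v$ first so that they agree on this edge. Now strip $v$ and apply the inductive hypothesis to $T\setminus\{v\}$ and $T'\setminus\{v\}$ to obtain a sequence of flips $T\setminus\{v\} = S_0, S_1, \ldots, S_m = T'\setminus\{v\}$. Each flip $S_i \to S_{i+1}$ is at some vertex $u$: if $u \neq w$, it lifts directly to a valid sink/source flip in the corresponding orientation of $T$ (the sink/source status of $u$ is unaffected by $v$, since $u$ and $v$ are not adjacent). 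If $u = w$, then $w$ may fail to be a sink/source in the ambient tree because the edge to $v$ points the wrong way; in that case we first flip $v$ (a leaf, hence still a sink or source), which reverses the $v$-$w$ edge and restores $w$ to being a sink or source in the ambient tree, then flip $w$, and finally flip $v$ back. This yields a valid sequence of sink/source flips in $T$ ending at $T'$.

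The main (if modest) obstacle is this last combinatorial bookkeeping: showing that reflections internal to $T\setminus\{v\}$ always lift, possibly after a pair of compensating flips at the leaf $v$. Granting this, the conclusion is immediate: strong stable equivalence of small categories is transitive, since the composite of two pseudo-natural equivalences $\Phi\colon (-)^A \simeq (-)^{A'}$ and $\Psi\colon (-)^{A'} \simeq (-)^{A''}$ of $2$-functors $\cDER_{\mathrm{St},\mathrm{ex}} \to \cDER$ is again such a pseudo-natural equivalence. Composing the chain of reflection equivalences obtained from each elementary flip gives the desired strong stable equivalence $T \sse T'$.
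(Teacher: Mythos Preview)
Your approach is the same as the paper's: reduce to single sink/source reflections, each an instance of \autoref{thm:reflect}, and compose. The paper dispatches the combinatorial reduction in one line (``By an inductive argument, it suffices to show\ldots''), treating as known the fact that any two orientations of a finite tree are connected by such flips; you supply this argument explicitly.

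There is one small slip in your bookkeeping. In the three-step process ``flip $v$, flip $w$, flip $v$ back,'' each of the three flips reverses the $v$--$w$ edge, so the net effect on that edge is a single reversal; the final step does not restore the original orientation as your phrasing suggests. Consequently the orientation of the $v$--$w$ edge at the end of your lifted sequence is not controlled and need not match $T'$. The fix is immediate: drop the ``flip $v$ back'' step (the first two already lift the $w$-flip to the big tree), and after processing the whole sequence perform one more flip at $v$ if the $v$--$w$ edge still disagrees with $T'$. With this correction the argument is complete.
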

\begin{proof}
By an inductive argument, it suffices to show that if $T$ is as above and $t_0\in T$ is a source, then the reflected tree $T'=\sigma_{t_0}T$ and $T$ are strongly stably equivalent. But obviously $T=C^-$ and $T'=C^+$ for the full subcategory $C\subseteq T$ of $T$ obtained by removing $t_0$, hence \autoref{thm:reflect} concludes the proof.
\end{proof}

Increasing the class of shapes, we obtain the following.

\begin{thm}\label{thm:stable-acyclic}
Let $Q$ be a finite acyclic quiver and let $q_0\in Q$ be a source or a sink and let $Q'=\sigma_{q_0}Q$ be the reflected quiver. The two quivers $Q$ and $Q'$ are strongly stably equivalent.
\end{thm}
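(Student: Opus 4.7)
The plan is to derive this directly from \autoref{thm:reflect} by recognising $Q$ and $Q' = \sigma_{q_0}Q$ (more precisely, their path categories) as an instance of the pair $(C^-, C^+)$ built from a common complement category $C$ together with a list of objects. By symmetry (passing to opposites if necessary), I may assume $q_0$ is a source of $Q$. I would let $C$ be the full subcategory of the path category of $Q$ on all vertices other than $q_0$; since $q_0$ is a source of an acyclic quiver, $C$ is simply the path category of the subquiver obtained by deleting $q_0$ together with all its outgoing edges $e_1,\ldots,e_n$. Let $y_1,\ldots,y_n\in C$ denote the targets of these edges, possibly with repetitions if $Q$ has parallel edges sharing a target.

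The key step is to verify that the pushout \eqref{eq:C+C-} defining $C^-$ is canonically isomorphic to the path category of $Q$: the freely adjoined apex $v$ matches $q_0$ and the freely adjoined morphisms $\beta_k\colon v\to y_k$ match the edges $e_k$. This amounts to comparing normal forms via \autoref{lem:glue}, using the observation that, in the path category of $Q$, the only non-identity morphisms with source $q_0$ are composites $\gamma\circ e_k$ for some (possibly identity) morphism $\gamma$ of $C$, and that acyclicity prevents any non-identity morphism into $q_0$. The mirror-image verification identifies $C^+$ with the path category of $Q'$. With those identifications, \autoref{thm:reflect} applied to $C$ with the string $y_1,\ldots,y_n$ yields a strong stable equivalence $Q = C^- \sse C^+ = Q'$.

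The only potential subtlety is the case where the $y_i$ are not pairwise distinct (parallel edges out of $q_0$), in which case the pushout attaches a corresponding number of distinct morphisms $v\to y_i$ that must be matched with the parallel edges of $Q$. However, \autoref{thm:reflect} makes no distinctness assumption on the $y_i$ (indeed \autoref{cor:contract-sep-mor} explicitly removes \autoref{hyp:y's}), so this is not an obstacle; the entire content of the argument is the pushout identification above, which is a direct unwinding of universal properties.
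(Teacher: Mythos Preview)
Your proposal is correct and follows essentially the same route as the paper's own proof: assume $q_0$ is a source, take $C$ to be the full subcategory on the remaining vertices, identify $Q$ with $C^-$ and $Q'$ with $C^+$, and invoke \autoref{thm:reflect}. You spell out the pushout identification in more detail than the paper does (which simply asserts $Q=C^-$ and $Q'=C^+$), and you correctly note that the possible non-distinctness of the $y_i$ is already handled by \autoref{thm:reflect}; the only minor remark is that ``passing to opposites'' is not really needed for the sink case, since then one directly has $Q=C^+$ and $Q'=C^-$ and the same theorem applies.
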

\begin{proof}
Assuming without loss of generality that $q_0$ is a source, we observe that $Q=C^-$ for the full subcategory $C\subseteq Q$ obtained by removing $q_0$. In this case one notes that $Q'=C^+$ and \autoref{thm:reflect} applies.
\end{proof}

\begin{rmk}
Specializing to the derivator $\D_k$ of a field $k$, \autoref{thm:stable-acyclic} yields exact equivalences of derived categories
\[
D(kQ)\stackrel{\Delta}{\simeq} D(kQ').
\]
The classical representation theory is more concerned with bounded derived categories of finite dimensional representations. However, as Rickard showed in~\cite[Corollary 8.3]{rickard:morita} (and its proof), any exact equivalence between the unbounded derived categories restricts to an exact equivalence of the corresponding bounded derived categories,
\[
D^b(kQ)\stackrel{\Delta}{\simeq} D^b(kQ')
\]
Hence the reflection functors yield such an equivalence and we recover a theorem of Happel (see~\cite[\S1.7]{happel:fd-algebra} and also the references therein).
\end{rmk}

In contrast to the case of trees, already for acyclic quivers it is not true that such quivers can be reoriented arbitrarily without affecting the abstract representation theory. If $Q,Q'$ are finite and without oriented cycles, then $Q,Q'$ being strongly stably equivalent still implies that $Q$ and $Q'$ have the same underlying graph (\cite[Proposition 5.3]{gst:basic}), but this condition is no longer sufficient. Let us consider the simplest case, where $Q$ is an orientation of an $n$-cycle:
\[
\xymatrix{
&& n \\
1 \ar@{-}[r] \ar@{-}[urr] & 2 \ar@{-}[r] & \cdots \ar@{-}[r] & n-2 \ar@{-}[r] & n-1 \ar@{-}[ull]. }
\]
In representation theory one says that $Q$ is an \textbf{Euclidean} (or \textbf{extended Dynkin}) quiver of type $\widetilde{A}_{n-1}$, \cite{ringel:tame-alg,simson-skowronski:vol2}. Given such $Q$, put $c(Q) = \{p,q\}$, where $p$ is the number of arrows oriented clockwise and $q$ is the number of arrows oriented anti-clockwise. Then one obtains the following.

\begin{prop}
Let $Q,Q'$ be two orientations of an $n$-cycle, $n \ge 1$. Then $Q \sse Q'$ if and only if $c(Q) = c(Q')$.
\end{prop}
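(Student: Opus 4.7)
The plan is to prove the two implications separately. Encode each orientation of the $n$-cycle by its cyclic sequence $(\varepsilon_1,\dots,\varepsilon_n)\in\{+,-\}^n$, where $\varepsilon_i=+$ iff the $i$-th edge (in a fixed cyclic order) is oriented clockwise. Graph symmetries of the $n$-cycle induce quiver isomorphisms and hence strong stable equivalences; on sequences, cyclic rotations act as cyclic shifts while the geometric reflection acts as reversal combined with $+\leftrightarrow-$. A BGP reflection at a source or sink at vertex $v$, available by \autoref{thm:reflect}, corresponds to swapping the (necessarily distinct) entries $\varepsilon_{v-1}$ and $\varepsilon_v$.

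For sufficiency, suppose $c(Q)=c(Q')$. If $c(Q)=\{0,n\}$ then both $Q$ and $Q'$ are cyclically oriented and therefore isomorphic as categories (via a rotation and possibly the geometric reflection), so $Q\sse Q'$ holds trivially. Otherwise $c(Q)=\{p,q\}$ with $p,q\ge 1$ and the sequence is non-constant, so there exist adjacent positions with distinct entries. A standard bubble-sort argument then shows that iterated adjacent swaps at such positions transform any cyclic binary sequence with $p$ pluses and $q$ minuses into any other such sequence; the potential swap $p\leftrightarrow q$ is handled by the geometric reflection of the cycle. Composing the corresponding strong stable equivalences yields $Q\sse Q'$.

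For necessity, suppose $Q\sse Q'$ and fix a field $k$. By \autoref{eg:grothendieck-shift} we have $\D_k^Q\simeq\D_{kQ}$ and similarly for $Q'$, so the assumed pseudo-natural equivalence yields a triangle equivalence of derived categories $D(kQ)\stackrel{\Delta}{\simeq}D(kQ')$. The cyclic case $c=\{0,n\}$ is separated from the acyclic cases by the global dimension of the path algebra, which is infinite in the presence of a directed cycle and at most $1$ otherwise; finiteness of global dimension is preserved under derived equivalence. If both $Q$ and $Q'$ are acyclic, they are finite Euclidean quivers of type $\widetilde{A}_{n-1}$ with parameters $\{p,q\}$ and $\{p',q'\}$ respectively, and classical representation theory (see~\cite{ringel:tame-alg,simson-skowronski:vol2}) describes the regular component of the Auslander--Reiten quiver of $D^b(kQ)$ as a family of tubes, precisely two of which are exceptional, of periods $p$ and $q$. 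The multiset $\{p,q\}=c(Q)$ is therefore an invariant of the triangle equivalence class of $D^b(kQ)$, giving $c(Q)=c(Q')$.

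The main obstacle is the necessity direction: while sufficiency follows formally from \autoref{thm:reflect} together with an elementary combinatorial argument, necessity relies on non-trivial input from the representation theory of tame hereditary algebras, namely the tube structure of the Auslander--Reiten quiver, in order to recover the invariant $c$ from the derived category.
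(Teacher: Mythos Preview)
Your sufficiency argument is correct and essentially the same as the paper's, phrased in terms of bubble-sorting binary strings rather than reducing to the canonical orientation $\widetilde{A}_{p,q}$; these are equivalent combinatorial pictures.

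There is, however, a genuine error in your necessity argument. You propose to separate the fully cyclic case $c(Q)=\{0,n\}$ from the acyclic cases by global dimension, claiming that ``the global dimension of the path algebra \ldots\ is infinite in the presence of a directed cycle and at most $1$ otherwise.'' This is false: the path algebra $kQ$ of \emph{any} quiver (without relations) is a tensor algebra over the semisimple ring $kQ_0$ and is therefore hereditary, so $\operatorname{gl.dim}kQ\le 1$ in all cases. For the oriented $1$-cycle this is just $k[x]$, and for the oriented $n$-cycle one still gets a hereditary (albeit infinite-dimensional) algebra. Thus global dimension cannot distinguish the two situations, and your separation step fails.

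The paper handles this differently: it observes that $kQ$ is finite-dimensional over $k$ precisely when $c(Q)\ne\{0,n\}$, and that this is visible in $D^b(kQ)$ as the condition that all objects have finite-dimensional endomorphism rings. One obtains $D^b(kQ)\simeq D^b(kQ')$ from the derivator equivalence by passing to compact objects (a step you also need to make explicit, since you only derive an equivalence of unbounded derived categories). Once the finite-dimensional case is isolated, your appeal to the tube structure of the Auslander--Reiten quiver matches the paper's argument; note though that the correct statement is that the tubes have ranks $1$, $p$, and $q$ (with a $\mathbb{P}^1$-family of homogeneous tubes), rather than that exactly two tubes are exceptional --- this matters when $p=1$ or $p=q$.
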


\begin{proof}
The sufficiency of the `clock condition' $c(Q) = c(Q')$ is easy. One quickly convinces oneself that given $Q$ with $c(Q) = \{p,q\}$, $p \le q$, after finitely many reflections at sinks or sources one gets a quiver isomorphic to
\begin{equation}
\xymatrix@R=1em{
&& \bullet \ar[r] & \cdots \ar[r] & \bullet \ar[r] & \bullet \ar[rd] \\
\widetilde{A}_{p,q}: & \bullet \ar[ur] \ar[dr] &&&&& \bullet  \\
&& \bullet \ar[r] & \cdots \ar[r] & \bullet \ar[r] & \bullet \ar[ru] 
}
\end{equation}
with $p$ arrows above and $q$ arrows below. Hence if $c(Q) = C(Q')$, one gets for any stable derivator \D a strong stable equivalence $\D^Q \simeq \D^{\widetilde{A}_{p,q}} \simeq \D^{Q'}$ by composing finitely many general reflection morphisms (\autoref{thm:reflect}).  

To prove the necessity, let $k$ be a field, $\D = \D_k$ be the derivator of $k$, and suppose that $\D_k^Q \simeq \D_k^{Q'}$. We shall appeal to results from representation theory and show that then $c(Q) = c(Q')$. The equivalence of derivators gives an equivalence of the underlying categories which in turn gives an equivalence of the subcategories of compact objects. In our case this means that the bounded derived categories of finitely generated modules of the corresponding path algebras are equivalent,
\[ D^b(kQ) \simeq D^b(kQ'). \] 

Now $kQ$ is a finite dimensional algebra over $k$ if and only if not all arrows have the same orientation if and only if $c(Q) \ne \{0,n\}$ if and only if all objects of $D^b(\modr kQ)$ have finite dimensional endomorphism rings. Thus $c(Q) = \{0,n\}$ if and only if $c(Q') = \{0,n\}$.

Suppose now that $c(Q), c(Q') \ne \{0,n\}$. Then $kQ$ is finite dimensional and we can construct a so-called Auslander--Reiten quiver of $D^b(kQ)$. This is an infinite quiver which is a useful combinatorial invariant of $D^b(kQ)$ and its general shape is described in~\cite[Corollary 4.5(ii)]{happel:fd-algebra}. A more precise description can be extracted from~\cite[Theorem~3.6.5, p.~158]{ringel:tame-alg} or~\cite[Proposition~XII.2.8]{simson-skowronski:vol2}. In particular, the numbers $p,q$, where $c(Q) = \{p,q\}$, can be read off the Auslander--Reiten quiver since it contains so-called tubes of ranks precisely $1,p,$ and $q$. Of course one can do the same for $Q'$ and hence $c(Q) = c(Q')$.
\end{proof}

\begin{rmk}
The existence of reflection equivalences in \autoref{thm:reflect} applies to more general shapes than finite, acyclic quivers.
\begin{enumerate}
\item First, neither the finiteness nor the acyclicity is needed. In fact, given an arbitrary quiver $Q$ with a source or a sink, \autoref{thm:reflect} yields a strong stable equivalence between $Q$ and the reflected quiver $Q'$. In particular, if $Q$ has finitely many objects only, the infinite-dimensional path algebras $kQ$ and $kQ'$ are derived equivalent for arbitrary fields $k$, and there are variants if we use rings as coefficients instead.
\item More generally, as noted in \autoref{eq:derived-cat-alg}, \autoref{thm:reflect} yields strong stable equivalences for shapes which are more general than quivers. To the best of the knowledge of the authors, even in the case that $R=k$ is a field, the result that the category algebras $kC^-$ and $kC^+$ are derived equivalent does not appear in the published literature.
\end{enumerate}
\end{rmk}

\appendix

\section{Amalgamation of categories}
\label{sec:amalgamation}

As is illustrated by the construction of abstract reflection functors, performing more complicated constructions in derivators often means that we need to ``glue together'' various small categories or diagram shapes. Formally we are speaking of pushouts of categories, which is a fairly complicated construction. As we need to understand some of these pushouts rather explicitly (for example, in order to be able to compute slice categories), here we discuss some basic properties of pushouts and amalgamations of small categories. We fix the following notation for the rest of the appendix.
\begin{equation} \label{eq:pushout-cat}
\vcenter{
\xymatrix{
W \ar[d]_-{f_X} \ar[r]^-{f_Y} & Y \ar[d]^-{g_Y} \\
X \ar[r]_-{g_X} & Z\pushoutcorner \\
}
}
\end{equation}

Often one is only interested in categories up to equivalences, but pushouts of small categories are, in general, not well behaved with pushouts. To address this issue, we include the following lemma.

\begin{lem} \label{lem:separate-obj}
Let $f_X\colon W \to X$ be a functor in $\cCat$.
\begin{enumerate}
\item There exists a factorization $f_X = p\circ f_{\widetilde{X}}$ such that $f_{\widetilde{X}}\colon W \to \widetilde{X}$ is injective on objects and $p\colon \widetilde{X} \to X$ is surjective on objects and an equivalence.
\item If $f_X$ is injective on objects and $f_Y\colon W \to Y$ in~\eqref{eq:pushout-cat} is an equivalence, then also $g_X\colon X \to Z$ is an equivalence.
\end{enumerate}
\end{lem}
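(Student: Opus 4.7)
For (i), the plan is a direct explicit construction. Set $\mathrm{ob}(\widetilde{X}) := \mathrm{ob}(W) \sqcup (\mathrm{ob}(X) \setminus f_X(\mathrm{ob}(W)))$ and let $p_{\mathrm{ob}}\colon \mathrm{ob}(\widetilde{X}) \to \mathrm{ob}(X)$ be the map which agrees with $f_X$ on $\mathrm{ob}(W)$ and with the identity on the complement. Define the hom-sets of $\widetilde{X}$ by $\widetilde{X}(\tilde{x}_1,\tilde{x}_2) := X(p_{\mathrm{ob}}(\tilde{x}_1),p_{\mathrm{ob}}(\tilde{x}_2))$, with composition inherited from $X$. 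Then $p\colon\widetilde{X}\to X$ is tautologically fully faithful and surjective on objects, hence an equivalence. The unique functor $f_{\widetilde{X}}\colon W\to\widetilde{X}$ sending $w\in\mathrm{ob}(W)$ to the distinguished copy $w\in\mathrm{ob}(\widetilde{X})$ and acting as $f_X$ on morphisms satisfies $p\circ f_{\widetilde{X}}=f_X$ on the nose, and is injective on objects by construction.

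For (ii), I would first dispatch essential surjectivity of $g_X$: every object of $Z$ is of the form $g_X(x)$ or $g_Y(y)$, and in the latter case $y\cong f_Y(w)$ for some $w\in W$ by essential surjectivity of $f_Y$, whence $g_Y(y)\cong g_Y f_Y(w)=g_X f_X(w)$. For fully faithfulness, the conceptual plan is to invoke the canonical (``folk'') model structure on $\cCat$, whose weak equivalences are the equivalences of categories, whose cofibrations are the functors injective on objects, and whose fibrations are the isofibrations. Every object is cofibrant in this model structure, so it is left proper, and hence the pushout of a weak equivalence along a cofibration is again a weak equivalence. Applied to the square \eqref{eq:pushout-cat} with $f_X$ a cofibration and $f_Y$ a weak equivalence, this yields immediately that $g_X$ is an equivalence.

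A more hands-on alternative, if a self-contained argument is preferred, is to choose a pseudo-inverse $r\colon Y\to W$ to $f_Y$ with natural isomorphisms $\alpha\colon r f_Y\cong\id_W$ and $\beta\colon f_Y r\cong\id_Y$, and attempt to apply the universal property of $Z$ to the pair $(\id_X, f_X\circ r)\colon X\sqcup Y \to X$. Using the injectivity of $f_X$ on objects one writes out morphisms of $Z$ between objects of $g_X(\mathrm{ob}(X))$ as equivalence classes of zigzags in $X$ and $Y$, and one checks that every such zigzag reduces via $\alpha$ and composition to a single morphism of $X$, establishing fully faithfulness of $g_X$ explicitly.

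The principal technical obstacle in both approaches is the same: pushouts in $\cCat$ are strict, while the obvious candidate pair $(\id_X, f_X\circ r)$ agrees on $W$ only up to the natural isomorphism $f_X\ast\alpha$ rather than strictly. The role of the hypothesis that $f_X$ is injective on objects is precisely to ensure that one can strictify this incoherence without collapsing distinct objects of $X$; in the model-theoretic formulation this manifests as the statement that $f_X$ is a cofibration, and in a direct argument it is what makes the zigzag rewriting unambiguous. Verifying this strictification cleanly is where I expect the bulk of the work to lie.
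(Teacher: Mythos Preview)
Your proposal is correct and matches the paper's approach almost exactly. The paper invokes the canonical (``folk'') model structure on $\cCat$ for both parts: (i) is the factorization of $f_X$ into a cofibration followed by a trivial fibration, and (ii) is left properness, deduced from the fact that every small category is cofibrant. Your primary argument for (ii) is verbatim the paper's, and your explicit construction for (i) is precisely what the cofibration--trivial-fibration factorization produces when unwound, so the only difference is that you spell out (i) by hand rather than citing the factorization axiom.
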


\begin{proof}
Both are easy consequences of the existence of a (in fact unique) model structure on $\cCat$ with weak equivalences being the equivalences. This is a special case of a more general result in \cite{joyal-tierney:stacks}, and (i) is simply a factorization of $f_X$ into a cofibration followed by a trivial fibration. (ii) means that this model structure is left proper, which follows from the fact that every small category is cofibrant, \cite[Corollary 13.1.3]{hirschhorn:model}.
\end{proof}

For the rest of the section we adopt the following assumption and convention.

\begin{hyp} \label{hyp:embedding}
Assume that $f_X$ and $f_Y$ are honest inclusions of categories, that is, injective on objects and faithful. We will view $f_X$ and $f_Y$ as (not necessarily full) inclusions $W \subseteq X$ and $W \subseteq Y$, respectively.
\end{hyp}

\begin{defn} \label{defn:amalg}
The pushout~\eqref{eq:pushout-cat} is called an \textbf{amalgamation} if also $g_X$ and $g_Y$ are injective on objects and faithful. In this case we also view $g_X$ and $g_Y$ as inclusions $X \subseteq Z$ and $Y \subseteq Z$, respectively.
\end{defn}

\begin{rmk} \label{rem:amalg}
In the usual terminology of model theory, an amalgamation of the span $X \overset{f_X}\ot W \overset{f_Y}\to Y$ would in fact mean \emph{any} commutative square like~\eqref{eq:pushout-cat} (i.e., not necessarily a pushout) for which $g_X$ and $g_Y$ are inclusions. But if such a square exists, the pushout square is also an amalgamation in this sense. 
\end{rmk}

As shown in~\cite[Example 4.4]{macdonald-scull:amalgamations}, not every pushout of inclusions is an amalgamation. On the other hand, a sufficient condition for the existence of amalgamations is given in the same paper.

\begin{defn} \label{defn:3-for-2-amalg}
A functor $f\colon W \to Y$ has the \textbf{3-for-2 property} if, whenever $\alpha$ and $\beta$ are two composable morphisms in $Y$ and two of $\alpha, \beta, \beta\alpha$ belong to the honest (not just essential) image of $f$, then so does the third.
\end{defn}

\begin{prop}[{\cite[Theorem 3.3]{macdonald-scull:amalgamations}}] \label{prop:amalg-basic}
Suppose that $f_X\colon W \to X$ and $f_Y\colon W \to Y$ are functors in $\cCat$ which are injective on objects, faithful, and have the 3-for-2 property. Then their pushout~\eqref{eq:pushout-cat} is an amalgamation.
\end{prop}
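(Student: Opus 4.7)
The plan is to construct the pushout $Z$ explicitly and analyze its morphisms via a normal form theorem. Objects of $Z$ form the pushout of the underlying object sets, which is well-behaved because $f_X$ and $f_Y$ are injective on objects; in particular $\mathrm{Ob}(X)$ and $\mathrm{Ob}(Y)$ embed into $\mathrm{Ob}(Z)$ with intersection (via $g_X, g_Y$) equal to $\mathrm{Ob}(W)$, establishing object injectivity of $g_X$ and $g_Y$ immediately. Morphisms of $Z$ are equivalence classes of finite composable zigzags of morphisms drawn alternately from $X$ and $Y$, modulo the relations of composition within each of $X$ and $Y$ and the identification of $f_X(w)$ with $f_Y(w)$ for every morphism $w$ of $W$.

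I would next define a \emph{normal form} for a non-identity morphism in $Z(a,b)$ to be either a single morphism entirely in $X$, a single morphism entirely in $Y$, or a composable sequence $\alpha_k \circ \cdots \circ \alpha_1$ with $k \ge 2$ in which the $\alpha_i$ lie strictly alternately in $X \setminus f_X(W)$ and $Y \setminus f_Y(W)$, with all intermediate source/target objects lying in $W$. Existence of such a normal form for any morphism is straightforward: given a zigzag representative, repeatedly compose any two consecutive arrows lying in the same of $X$ or $Y$, and absorb any $W$-morphism (which may be viewed in either $X$ or $Y$) into an adjacent non-$W$ morphism in the alternating component. This process terminates because it strictly decreases length.

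The heart of the argument is uniqueness of normal form, and this is precisely where the 3-for-2 hypothesis enters decisively. The 3-for-2 property rules out the pathological phenomenon of two non-$W$ morphisms in $X$ (respectively $Y$) composing to a morphism lying in $f_X(W)$ that can then be reinterpreted in $Y$ (respectively $X$) and split differently: if $\beta \alpha$ lies in $f_X(W)$ and one of $\alpha, \beta$ lies in $f_X(W)$, then both do. Consequently one cannot surreptitiously introduce or remove a $W$-bridge between the $X$- and $Y$-parts of a zigzag, and in particular the alternation pattern and each $\alpha_i$ of a normal form are forced by the morphism it represents. A confluence/diamond-style argument on the rewriting system generated by the elementary moves then yields that any two zigzag representatives of the same morphism in $Z$ reduce to a common normal form.

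With the normal form theorem in hand, the remaining claims are immediate. For faithfulness of $g_X$, a morphism $\alpha \in X(a,b)$ has for its normal form in $Z$ either $\alpha$ itself (if $\alpha \notin f_X(W)$) or the image of its unique $W$-preimage (if $\alpha \in f_X(W)$, using faithfulness of $f_X$); thus two morphisms of $X$ that agree in $Z$ have equal normal forms and must be equal in $X$. The symmetric argument gives faithfulness of $g_Y$. The main obstacle is making the confluence argument for uniqueness of normal forms fully precise, which amounts to checking that each pair of elementary rewrites commutes up to further rewrites, with the 3-for-2 property handling the single delicate case; once that is done, the rest of the proof is essentially bookkeeping.
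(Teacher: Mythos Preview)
Your overall architecture---build $Z$ explicitly as equivalence classes of composable words, analyze a reduction process, and read off faithfulness of $g_X,g_Y$---matches the approach of the cited source, which the paper simply quotes without reproducing a proof. However, the key technical claim you rely on is too strong and is in fact false.

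You assert that every morphism of $Z$ has a \emph{unique} alternating normal form, and that a diamond/confluence argument on the elementary reductions establishes this, with the 3-for-2 property disposing of the one delicate overlap. But the rewriting system is \emph{not} locally confluent under the 3-for-2 hypothesis alone. Take $W$ to be the poset $a<b$ with its unique non-identity arrow $w\colon a\to b$, let $X$ adjoin a new object $c$ and a map $\alpha\colon c\to a$, and let $Y$ adjoin a new object $d$ and a map $\beta\colon b\to d$. Both inclusions $W\hookrightarrow X$ and $W\hookrightarrow Y$ are fully faithful, hence have 3-for-2. The word $(\beta,w,\alpha)$ reduces to $(\beta w,\alpha)$ in $Y$ and to $(\beta,w\alpha)$ in $X$; both are reduced, alternating, length-two normal forms in your sense, and they are distinct. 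So uniqueness of normal forms fails. (Uniqueness \emph{does} hold when $W$ is discrete---this is the paper's \autoref{lem:normal-forms}---but not in the generality of the proposition.) Relatedly, your gloss on 3-for-2 is off: it does \emph{not} prevent two non-$W$ morphisms from composing to a $W$ morphism; the hypothesis only fires when two of the three morphisms already lie in $W$.

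What the cited proof actually establishes (and what the paper records as \autoref{lem:Z-reducible}) is the weaker statement that a \emph{singleton} word $(\alpha)$ is the unique minimal element of its equivalence class. This is exactly enough: if $\alpha_1,\alpha_2\in X$ become equal in $Z$, then $(\alpha_1)$ and $(\alpha_2)$ are both minimal in the same class, hence equal in $\overline{Z}$, hence equal in $X$ by faithfulness of $f_X$. So your plan can be salvaged by retreating from global confluence to this targeted statement about length-one words; the delicate inductive argument for that weaker claim is where 3-for-2 genuinely does its work.
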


\begin{rmk} \label{rmk:3-for-2}
The result is rather subtle in that it is \emph{not} enough to assume that only one of $f_X$ and $f_Y$ has the 3-for-2 property; see~\cite[Example 4.4]{macdonald-scull:amalgamations} again. Note that $f\colon W \to Y$ has the 3-for-2 property for example if $f$ is fully faithful or if $W$ is a groupoid (so in particular if $W$ is a discrete category as in \S\S\ref{sec:glue}-\ref{sec:glue-epi} and \S\ref{sec:reflection}).
\end{rmk}

For practical purposes it will be convenient to know that the 3-for-2 property transfers via amalgamations, i.e., that also the functors $g_X$ and $g_Y$ have it. Once we know this, we can iterate the amalgamation process. Here we need refine the argument in~\cite{macdonald-scull:amalgamations}.

We first recall details about the construction of a pushout in $\cCat$. At the level of objects, we simply construct the pushout of sets.
The morphisms in the pushout are more interesting, see \cite[\S2]{macdonald-scull:amalgamations} for details. To this end, we denote by $\overline{Z}$ the pushout of the sets of morphisms of $X$ and $Y$ over the set of morphisms of $W$. In particular an element of $\overline{Z}$ which comes from both $X$ and $Y$ comes already from $W$ by our standing assumption. Every morphism in $Z$ is represented by a finite sequence
\[ (\alpha_1,\alpha_2, \dots, \alpha_n) \]
of length $n \ge 1$ in $\overline{Z}$, subject to the condition that codomain of $\alpha_{i+1}$ always agrees with the domain of $\alpha_i$. The composition of morphisms is simply given by concatenation. Of course we must identify some of these sequences. To do so, we first define a partial order on the set of allowable sequences of elements of $\overline{Z}$ which is generated by the \emph{elementary reductions}%
\begin{equation} \label{eq:el-red}
(\alpha_1,\dots,\alpha_i,\alpha_{i+1}, \dots, \alpha_n) > (\alpha_1,\dots,\alpha_i\alpha_{i+1}, \dots, \alpha_n),
\end{equation} 
where either both $\alpha_i$ and $\alpha_{i+1}$ are morphisms from $X$ and the composition on the right takes place in $X$, or symmetrically $\alpha_i$ and $\alpha_{i+1}$ are from $Y$ and we compose them in $Y$. This \emph{reduction order} is of course a binary relation and by taking its symmetric and transitive closure, we obtain an equivalence relation. The morphisms in $Z$ are then precisely the equivalence classes of allowable sequences in $\overline{Z}$.

For convenience, we introduce the following notation. Given an allowable sequence $\gamma = (\alpha_1,\alpha_2,\dots, \alpha_n)$, we denote the equivalence class of $\gamma$ by $[\alpha_1,\alpha_2,\dots, \alpha_n]$, and we view this equivalence class as a partially ordered set with the restriction of the reduction order above. The following is a key observation.

\begin{lem} \label{lem:Z-reducible}
Suppose that $\gamma = (\alpha_1)$ consist of a single element of $\overline{Z}$. Then $\gamma$ is the unique minimal element of $[\alpha_1]$ with respect to the reduction order.
\end{lem}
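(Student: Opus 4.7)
Minimality of $\gamma=(\alpha_1)$ is immediate: the elementary reductions~\eqref{eq:el-red} operate on a pair of adjacent entries, so a length-one sequence admits no reduction whatsoever.

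For the uniqueness of this minimum, my strategy is to prove by induction on the length $n$ that every $\gamma'=(\beta_1,\dots,\beta_n)\in[\alpha_1]$ admits a descending chain of elementary reductions terminating at $(\alpha_1)$. Because the length strictly decreases along each reduction, the reduction order is terminating, so such a descending chain—if it exists—reaches a minimum; combined with the argument below identifying this minimum with $(\alpha_1)$, this rules out any competing minimum in $[\alpha_1]$.

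The base case $n=1$ requires showing that whenever $(\beta_1)\sim(\alpha_1)$ one has $\beta_1=\alpha_1$ in $\overline{Z}$. Any zigzag of expansions and reductions joining two length-one sequences must pass through sequences of length $\ge 2$. To rule out that such a zigzag drifts to a different singleton, one tracks a ``formal evaluation'' into $\overline{Z}$ that is invariant along elementary reductions: this uses that both $W\hookrightarrow X$ and $W\hookrightarrow Y$ preserve composition, so elements of $W$ compose coherently on either side of the pushout, and hence the unique singleton reachable from a given sequence by reductions is independent of the reduction path.

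The inductive step for $n\ge 2$ is the crux. Any elementary reduction of a sequence in $[\alpha_1]$ remains in $[\alpha_1]$ by construction, so it suffices to show that $\gamma'$ is \emph{reducible}. This is equivalent to excluding that the equivalence class $[\alpha_1]$ contains a normal-form sequence of length $\ge 2$, i.e., one in which no two adjacent entries lie on the same side. The main obstacle is to rule out such strictly alternating normal forms. I expect to do so by analyzing zigzag paths from $(\alpha_1)$: each expansion occurs entirely inside $X$ or inside $Y$, and any subsequent ``side switch'' must be mediated by an element of $W$. A careful bookkeeping of these transitions, together with \autoref{hyp:embedding} (and, where it applies, the 3-for-2 property of $f_X$ and $f_Y$), shows that each generation-by-generation descendant of $(\alpha_1)$ retains a reducible adjacent pair, preventing the formation of any genuinely alternating normal-form sequence of length $\ge 2$ in $[\alpha_1]$.
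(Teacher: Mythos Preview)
The paper's own proof is simply a citation: it points to \cite[Theorem~3.3 and \S5]{macdonald-scull:amalgamations}. The underlying technique there (and what the paper reproduces explicitly in \autoref{lem:normal-forms} for the discrete-$W$ case) is a local-confluence argument: one checks that any two elementary reductions of a common sequence can be joined from below, and since the reduction order is terminating, Newman's lemma gives a unique minimum in every equivalence class. The 3-for-2 hypothesis enters precisely in the overlapping-reduction case $(\dots,\alpha_i,\alpha_{i+1},\alpha_{i+2},\dots)$ where $\alpha_i\alpha_{i+1}\in X$ while $\alpha_{i+1}\alpha_{i+2}\in Y$: it is what lets one pass through $W$ and rejoin the two branches.

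Your proposal does not follow this route, and the alternative you sketch has two genuine gaps. First, the ``formal evaluation into $\overline{Z}$'' invoked for the base case is not defined: $\overline{Z}$ is merely the pushout of the \emph{sets} of morphisms and carries no composition, so there is no invariant there to track along a zigzag unless all entries already lie on one side. The sentence ``the unique singleton reachable from a given sequence by reductions is independent of the reduction path'' is exactly the statement you are trying to prove, so this step is circular. Second, your induction on the length $n$ is not actually an induction: the inductive hypothesis (shorter sequences in $[\alpha_1]$ reduce to $(\alpha_1)$) is never used in the step, which instead aims to show directly that any length-$n$ sequence in $[\alpha_1]$ is reducible. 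That reducibility claim is the whole content of the lemma, and ``careful bookkeeping of side switches'' is a description of the difficulty rather than a resolution of it. In particular, without a confluence-type argument you have no control over a zigzag that expands in $X$, then reduces along a \emph{different} pair, then expands in $Y$, and so on; this is exactly where the 3-for-2 property must be deployed, and your plan does not say how.
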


\begin{proof}
This is exactly what the first paragraph of the proof of \cite[Theorem 3.3]{macdonald-scull:amalgamations} asserts. For a very detailed proof we refer to the rest of the proof of Theorem 3.3 and to \S5 in op.\ cit.
\end{proof}

Now we can complement \autoref{prop:amalg-basic} with the promised result which will allow for iterated amalgamations.

\begin{prop} \label{prop:amalg-iterate}
Suppose that $f_X\colon W \to X$ and $f_Y\colon W \to Y$ are injective on objects and faithful functors with the 3-for-2 property. Then, in their pushout amalgamation~\eqref{eq:pushout-cat}, also $g_X$ and $g_Y$ have the 3-for-2 property. 
\end{prop}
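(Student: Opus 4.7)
The proof will proceed by an explicit analysis of how sequences in $\overline{Z}$ reduce to normal forms, with Lemma~\ref{lem:Z-reducible} (unique minimality of single-element sequences) as the essential input. By the symmetry between the roles of $X$ and $Y$ in the pushout data, it suffices to establish the 3-for-2 property for $g_X$; the case of $g_Y$ follows by swapping letters. Given composable $\alpha,\beta$ in $Z$ with two of $\alpha,\beta,\beta\alpha$ in the image of $g_X$, the case when both $\alpha$ and $\beta$ lie in $g_X(X)$ is immediate since $g_X$ is a functor. The two remaining cases, namely $\alpha,\beta\alpha \in g_X(X)$ with $\beta$ to be analyzed, and $\beta,\beta\alpha \in g_X(X)$ with $\alpha$ to be analyzed, are handled by the same induction with the ``extra'' morphism attached on the right or left of the sequences, so I sketch only the former.

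Write $\alpha = [a]$ and $\beta\alpha = [c]$ with $a,c \in X$. Choose a \emph{reduced} representative $(y_1,\dots,y_n)$ of $\beta$, meaning one to which no elementary reduction~\eqref{eq:el-red} applies; such a representative exists by iteratively performing elementary reductions on any initial choice. I claim by induction on $n$ that $\beta \in g_X(X)$. The base case $n=0$ is trivial: $\beta$ is an identity and hence in $g_X(X)$.

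For the inductive step $n \geq 1$, consider the concatenated sequence $(y_1,\dots,y_n,a)$, which represents $\beta\alpha = [c]$. By Lemma~\ref{lem:Z-reducible}, $(c)$ is the unique minimum of this equivalence class, and consequently $(y_1,\dots,y_n,a)$ can be reduced to $(c)$ by a chain of elementary reductions. Since $(y_1,\dots,y_n)$ was reduced, the only adjacency in $(y_1,\dots,y_n,a)$ to which an elementary reduction can apply is the new pair $(y_n,a)$; hence $y_n$ and $a$ lie in a common one of $X, Y$. The crucial step is to show that $y_n \in X$. Suppose instead that $y_n \in Y \setminus W$. Then $a$ must lie in $W$ (so that $(y_n,a)$ can be composed in $Y$), yielding the element $y_n \cdot f_Y(a) \in Y$. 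If this product were to lie in $W$, then applying the 3-for-2 property of $f_Y$ to the composable pair $f_Y(a), y_n$ in $Y$ (whose composite is in $f_Y(W)$, as is $f_Y(a)$ itself) would force $y_n \in f_Y(W) = W$, contradicting $y_n \in Y \setminus W$. So $y_n \cdot f_Y(a) \in Y \setminus W$, and $(y_1,\dots,y_{n-1}, y_n \cdot f_Y(a))$ is again reduced and of length $n$; this contradicts its equivalence to the length-one minimum $(c) \in X$ in $\overline{Z}$ (the $n=1$ subcase is immediate since a length-one sequence in $Y \setminus W$ cannot equal one in $X$ in $\overline{Z}$).

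Having established $y_n \in X$, set $a' := y_n \cdot a \in X$. The sequence $(y_1,\dots,y_{n-1})$ remains reduced and of length $n-1$, and if we write $\beta'' := [y_1,\dots,y_{n-1}]$, then $\beta''[a'] = [c] \in g_X(X)$ while $[a'] \in g_X(X)$. The induction hypothesis applied to the pair $(\beta'', [a'])$ yields $\beta'' \in g_X(X)$, and since $[y_n] \in g_X(X)$ we conclude $\beta = \beta'' \cdot [y_n] \in g_X(X)$, completing the induction. The main obstacle is the case analysis in the crucial step above: the possibility that the first reduction proceeds in $Y$ (when $a$ factors through $W$) must be excluded using the 3-for-2 property of $f_Y$, and dually, when proving the corresponding statement for $g_Y$, the 3-for-2 hypothesis on $f_X$ is what blocks the symmetric bad case. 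This is precisely why the proposition requires the 3-for-2 hypothesis on \emph{both} $f_X$ and $f_Y$, consistent with \autoref{rmk:3-for-2}.
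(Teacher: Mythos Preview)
Your proof is correct and follows essentially the same approach as the paper's: both argue by induction on the length of a representing sequence for $\beta$, invoke Lemma~\ref{lem:Z-reducible} to force a reduction at the boundary position, and use the 3-for-2 property of $f_Y$ to exclude the case $y_n\in Y\setminus W$. Your choice to start from a \emph{reduced} representative of $\beta$ slightly streamlines the argument (the paper instead works with an arbitrary representative and selects the elementary reduction with maximal index), but the underlying mechanism is identical.
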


\begin{proof}
By symmetry we only need to treat $g_X$. Suppose that $\alpha_1,\beta$ are composable morphisms in $Z$ and that $\alpha_1$ and $\alpha_1\beta$ both belong to $X$. We must show that $\beta$ belongs there as well.

To this end, $\beta$ can be represented by a suitable sequence $\gamma = (\alpha_2, \dots, \alpha_n)$ of elements of $\overline{Z}$. Then $\alpha_1\beta$ is represented by $\delta = (\alpha_1, \alpha_2, \dots, \alpha_n)$ and, by \autoref{lem:Z-reducible}, $[\alpha_1, \alpha_2, \dots, \alpha_n]$ has the unique minimal element $(\alpha_1\beta)$ with respect to the reduction order. We shall prove by induction on $n$ that $\beta$ is in $X$. 

Suppose first $n=2$. In this case $\beta = \alpha_2$ belongs either to $X$ or $Y$.
If $\beta$ is in $X$, we are done. If $\beta$ is in $Y$, we know by the above that $(\alpha_1,\beta) > (\alpha_1\beta)$ in the reduction order on $[\alpha_1\beta]$. By definition of the reduction order, the latter must be an elementary reduction and hence all $\alpha_1,\beta,\alpha_1\beta$ belong to $X$ or all three belong to $Y$. In the first case we are done and in the second case we know that $\alpha_1, \alpha_1\beta \in X \cap Y = W$. Hence $\beta \in W\subseteq X$ by the 3-for-2 property of $f_Y\colon W \overset{\subseteq}\to Y$.

If now $n>2$, there is an elementary reduction
\[
(\alpha_1, \alpha_2, \dots, \alpha_i, \alpha_{i+1}, \dots, \alpha_n) > (\alpha_1, \alpha_2, \dots, \alpha_i\alpha_{i+1}, \dots, \alpha_n)
\]
Let us choose such a reduction with maximal possible $i$. Two situations may occur. If $i>1$, then by the very definition of elementary reductions we have that $(\alpha_2, \dots, \alpha_n) > (\alpha_2, \dots, \alpha_i\alpha_{i+1}, \dots, \alpha_n)$ and also that $\beta$ is in $X$ by the induction hypothesis.

Suppose on the other hand that $i=1$. We claim that in such a case $\alpha_2$ is in $X$. To this end, assume by way of contradiction that $\alpha_2 \in Y \setminus W$. Then $\alpha_1 \in W$ since we have the reduction $(\alpha_1,\alpha_2,\alpha_3,\ldots,\alpha_n)>(\alpha_1\alpha_2,\alpha_3,\ldots,\alpha_n)$. Consequently $\alpha_1\alpha_2 \in Y \setminus W$ since otherwise $\alpha_1,\alpha_1\alpha_2 \in W$ would imply $\alpha_2\in W$. Finally, since the sequence $(\alpha_1\alpha_2, \alpha_3,\dots,\alpha_n)$ must reduce further, the maximality of $i=1$ implies
\[ (\alpha_1\alpha_2, \alpha_3, \dots, \alpha_n) > (\alpha_1\alpha_2\alpha_3, \dots, \alpha_n). \]
Now $\alpha_1\alpha_2 \in Y \setminus W$, so $\alpha_3 \in Y$ in order for the reduction to be defined. However, then we also have an elementary reduction
\[ (\alpha_1, \alpha_2, \alpha_3, \dots, \alpha_n) > (\alpha_1, \alpha_2\alpha_3, \dots, \alpha_n). \]
contradicting the maximality of $i$. This proves the claim.

To summarize, we have $\alpha_1,\alpha_2 \in X$. Now let $\alpha' = \alpha_1\alpha_2 \in X$ and $\beta'$ be the equivalence class $[\alpha_3,\dots,\alpha_n]$. Then $\alpha', \alpha'\beta' \in X$ and we infer by the inductive hypothesis that $\beta' \in X$. Then clearly $\beta = \alpha_2\beta'$ is in $X$, which finishes the induction.

The case when $\alpha,\beta$ are composable in $Z$ and $\beta, \alpha\beta$ are in $X$ is similar.
\end{proof}

As pointed out in \cite{macdonald-scull:amalgamations}, a special case when a functor has the 3-for-2 property is when it is fully faithful. Under our usual assumptions, it turns out that also full faithfulness is compatible with amalgamations. This has been observed already in~\cite{trnkova:sums-catgs}, and we include a short proof for the convenience of the reader.

\begin{prop} \label{prop:amalg-full}
Suppose that $f_X\colon W \to X$ and $f_Y\colon W \to Y$ are injective on objects. If $f_X$ is fully faithful and $f_Y$ is faithful and has the 3-for-2 property, then, in the pushout amalgamation~\eqref{eq:pushout-cat}, $g_Y\colon Y \to Z$ is fully faithful.
\end{prop}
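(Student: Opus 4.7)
The plan is to use the explicit description of morphisms in $Z$ as equivalence classes $[\alpha_1,\ldots,\alpha_n]$ of allowable sequences of elements of $\overline{Z}$, modulo the equivalence relation generated by the elementary reductions~\eqref{eq:el-red}. Since \autoref{prop:amalg-basic} (applied via the fact that $f_X$ fully faithful satisfies the 3-for-2 property, as noted in \autoref{rmk:3-for-2}) already gives that $g_Y$ is injective on objects and faithful, the only remaining task is to establish fullness: every morphism $\beta$ in $Z$ with source and target in $g_Y(Y)$ must be represented by a single element of $Y\subseteq\overline{Z}$.

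The key observation underlying the argument is the following. Since $f_X\colon W\to X$ is fully faithful, any morphism in $X$ whose source and target both lie in the image of $W$ automatically comes from $W$. Moreover, since the object set of $Z$ is the set-theoretic pushout $X_{\mathrm{obj}}\sqcup_{W_{\mathrm{obj}}} Y_{\mathrm{obj}}$, an object of $Z$ lying in the images of both $g_X$ and $g_Y$ necessarily comes from $W$. Combining these, any morphism $\alpha$ in $X$ whose source and target (inside $Z$) lie in the image of $Y$ must in fact belong to $W\subseteq Y$.

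With this in hand I would prove fullness by induction on the length $n$ of a representing sequence $(\alpha_1,\ldots,\alpha_n)$ for $\beta\colon y_1\to y_2$. For $n=1$, either $\alpha_1\in Y$ and we are done, or $\alpha_1\in X$, in which case its endpoints $y_1,y_2$ lie in $X\cap Y$, hence in $W$, so the key observation puts $\alpha_1$ in $W\subseteq Y$. For $n\geq 2$, if $\alpha_1\in Y$ then the subsequence $(\alpha_2,\ldots,\alpha_n)$ is an allowable sequence of length $n-1$ with endpoints $y_1$ and $\mathrm{source}(\alpha_1)$, both in $Y$; by the inductive hypothesis it is equivalent to a single morphism $\alpha'\in Y$, and a further elementary reduction in $Y$ gives $[\alpha_1,\alpha']=[\alpha_1\alpha']$. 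If instead $\alpha_1\in X\setminus W$, then $y_2=\mathrm{target}(\alpha_1)$ lies in $X\cap Y=W$, and I look at $\alpha_2$: when $\alpha_2$ comes from $X$ (including the case $\alpha_2\in W$), an elementary reduction composes $\alpha_1\alpha_2$ inside $X$ and shortens the sequence to length $n-1$, so the inductive hypothesis applies; when $\alpha_2\in Y\setminus W$, the shared object $\mathrm{source}(\alpha_1)=\mathrm{target}(\alpha_2)$ lies in $X\cap Y=W$, and then the key observation forces $\alpha_1\in W$, contradicting $\alpha_1\in X\setminus W$.

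The main technical obstacle is really just to navigate this case analysis cleanly, keeping careful track of when intermediate objects in a sequence are forced into $W$ and when morphisms in $X$ collapse into morphisms in $W$; once the key observation is isolated, the induction proceeds as routine bookkeeping.
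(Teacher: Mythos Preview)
Your proof is correct and rests on the same central observation as the paper's: a morphism in $X$ whose endpoints lie in the image of $Y$ must have endpoints in $W$, and then full faithfulness of $f_X$ forces it into $W\subseteq Y$. The difference is purely organizational. The paper avoids your induction by first reducing the representing sequence $(\alpha_1,\ldots,\alpha_n)$ until no two consecutive entries come from the same category; the endpoints being in $Y$ then forces the alternation to start and end in $Y$, so the $X$-entries sit at even positions, sandwiched between $Y$-morphisms, with both endpoints in $W$. One application of the key observation to each even-indexed entry shows they all lie in $W$, and the whole composite is in $Y$. Your case-by-case induction achieves the same thing but with more bookkeeping; the paper's ``reduce to alternating form'' packaging is shorter and makes the role of the key observation more transparent.
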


\begin{proof}
We only need to prove that $g_Y$ is full. Suppose that we are given a morphism in $Z$, represented by a sequence $(\alpha_1, \alpha_2, \dots, \alpha_n)$ in $\overline{Z}$ such that the domain of $\alpha_n$ and the codomain of $\alpha_1$ belong to $Y$. By possibly reducing this sequence, we may assume that $\alpha_i$ belongs to $Y$ for $i$ odd and to $X$ for $i$ even. If $i$ is even, the domain and the codomain of $\alpha_i$ must be objects in $X \cap Y = W$. Since $f_X$ is full, $\alpha_i$ is a morphism in $W$, and hence also in $Y$. Thus all the $\alpha_i$ in fact belong to $Y$ and so does their composition.
\end{proof}

Finally, we consider the case where $W$ is a discrete category (recall \autoref{rmk:3-for-2}). The main advantage is that, analogous to the situation with free products of monoids, all morphisms of a pushout amalgamation of two categories over a discrete category have unique reduced factorizations to morphisms of the original categories (see \autoref{lem:glue}(iv) for an illustration). To state this precisely, we call an allowable sequence $(\alpha_1,\dots, \alpha_n)$ of elements of $\overline{Z}$ \emph{reduced} if it is minimal with respect to the reduction order. For $W$ discrete, the following stronger version of \autoref{lem:Z-reducible} holds.

\begin{lem} \label{lem:normal-forms}
Suppose that $W$ is a discrete category and $f_X\colon W \to X$ and $f_Y\colon W \to Y$ are injective on objects. Given any morphism in $Z$ represented by a sequence $\gamma = (\alpha_1,\dots, \alpha_n)$ in $\overline{Z}$, the equivalence class $[\alpha_1,\dots, \alpha_n]$ has a unique minimal element in the reduction order.

In other words, each non-identity morphism $\beta$ in $Z$ uniquely factors as $\beta = \alpha_1\cdots\alpha_n$, where each $\alpha_i$ belongs to $X$ or $Y$, but no composition $\alpha_i\alpha_{i+1}$ belongs to $X$ or $Y$.
\end{lem}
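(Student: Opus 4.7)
The plan is to prove local confluence of the reduction relation on the equivalence class of $\gamma$ and then invoke Newman's lemma. Every elementary reduction \eqref{eq:el-red} strictly decreases the length of the sequence, so the reduction order is well-founded on representatives of any fixed morphism in $Z$. Newman's lemma then reduces the uniqueness of a minimal element to local confluence: whenever $\gamma > \gamma_1$ and $\gamma > \gamma_2$ are two distinct elementary reductions out of the same sequence $\gamma$, one must exhibit a common further reduct $\gamma_3$ reachable from both $\gamma_1$ and $\gamma_2$.

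If the two elementary reductions involve disjoint adjacent pairs of indices (i.e., the indices $i$ and $j$ at which the compositions occur satisfy $|i-j|\geq 2$), then the reductions act on non-interacting segments of $\gamma$ and trivially commute, giving an obvious common further reduct. The interesting case is when the two reductions overlap at a single entry $\alpha_{i+1}$, with one reduction composing $(\alpha_i,\alpha_{i+1})$ and the other composing $(\alpha_{i+1},\alpha_{i+2})$. If both reductions happen inside the same category (both in $X$, or both in $Y$), then $\alpha_i,\alpha_{i+1},\alpha_{i+2}$ all lie in that category, and each reduct admits a second elementary reduction leading to the common sequence in which the triple is replaced by $\alpha_i\alpha_{i+1}\alpha_{i+2}$.

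The delicate case is when one reduction is performed in $X$ and the other in $Y$: this forces $\alpha_{i+1}\in\overline{Z}$ to arise from a morphism of $X$ and from a morphism of $Y$. Here the hypothesis that $W$ is discrete is essential. Indeed, $f_X$ and $f_Y$ are injective on objects and faithful, and the pushout-of-sets construction of $\overline{Z}$ identifies a morphism of $X$ with a morphism of $Y$ precisely when both are images of a common morphism of $W$; when $W$ is discrete, any such morphism is an identity on some object of $W$. Thus $\alpha_{i+1}$ is an identity, and $\alpha_i\alpha_{i+1}=\alpha_i$ as well as $\alpha_{i+1}\alpha_{i+2}=\alpha_{i+2}$, so both reductions produce the \emph{same} sequence $(\ldots,\alpha_i,\alpha_{i+2},\ldots)$, and no further reduction is needed to reconcile them.

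The main obstacle I anticipate is the bookkeeping around identity entries, which formally lie in both $X$ and $Y$ and can be absorbed from either side; but these are precisely the configurations handled by the third paragraph above, where the overlap forces the middle entry to be an identity and both choices of absorption coincide. Once local confluence is verified by this case analysis and Newman's lemma is applied, we obtain for each equivalence class a unique minimal representative. By construction such a representative has no two consecutive entries composable in $X$ or in $Y$, which gives the normal-form assertion in the second half of the lemma (with the identity morphism on an object of $W$ being represented by the one-element sequence consisting of the corresponding identity).
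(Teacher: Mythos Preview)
Your proof is correct and follows essentially the same route as the paper: both establish local confluence by the same three-case analysis (disjoint reductions, overlapping reductions in one category, overlapping reductions in different categories) and then combine this with well-foundedness of the reduction order to obtain a unique minimal element---you invoke Newman's lemma by name, while the paper phrases the same conclusion as ``downwards directed plus DCC implies a unique minimum.'' Your treatment of the mixed-category overlap is in fact slightly sharper than the paper's: you observe directly that the shared entry $\alpha_{i+1}$ must lie in $W$ and hence be an identity, so the two reductions literally coincide, whereas the paper argues more loosely that \emph{some} one of $\alpha_i,\alpha_{i+1},\alpha_{i+2}$ lies in $W$ and then splits into subcases.
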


\begin{proof}
Suppose that we have two elementary reductions of out sequence $\gamma$,
\begin{equation} \label{eq:two-el-red}
(\alpha_1,\dots, \alpha_i\alpha_{i+1}, \dots, \alpha_n) < \gamma > (\alpha_1,\dots, \alpha_j\alpha_{j+1}, \dots, \alpha_n),
\end{equation}
where $i\le j$ without loss of generality. We claim that there is a common predecessor. This is clear if $i=j$ and easy if $j-i \ge 2$ as then both the reductions further reduce to $(\alpha_1,\dots, \alpha_i\alpha_{i+1},\dots, \alpha_j\alpha_{j+1}, \dots, \alpha_n)$. If $j = i+1$, there are two cases. First, all of $\alpha_i,\alpha_{i+1},\alpha_{i+2}$ may belong to one of $X$ or $Y$. Then $(\alpha_1,\dots, \alpha_i\alpha_{i+1}\alpha_{i+2}, \dots, \alpha_n)$ is the common predecessor which we look for. Second, two of $\alpha_i,\alpha_{i+1},\alpha_{i+2}$ may belong to $X$ and one to $Y$, or vice versa. Then, since both the reductions from \eqref{eq:two-el-red} were possible, it is easy to check that in all possible distributions of $\alpha_i,\alpha_{i+1},\alpha_{i+2}$ among $X$ and $Y$, we always get that one of $\alpha_i,\alpha_{i+1},\alpha_{i+2}$ belongs to $W = X \cap Y$, so it is the identity morphism.  If $\alpha_{i+1} = \id$, the original reductions are equal, and in the remaining cases $(\alpha_1,\dots, \alpha_i\alpha_{i+1}\alpha_{i+2}, \dots, \alpha_n)$ is a common predecessor of the two. This proves the claim.

An easy induction argument shows now that $\big([\alpha_1,\ldots,\alpha_n],<\big)$ is a downwards directed poset. Together with the obvious fact that the reduction order satisfies the descending chain condition, it follows that $\big([\alpha_1,\ldots,\alpha_n],<\big)$ has a unique minimal (=reduced) element.
\end{proof}

\bibliographystyle{alpha}
\bibliography{tilting4}

\end{document}